\documentclass{amsart}
\usepackage{amssymb,amsmath,amsthm}
\usepackage{graphicx}
\usepackage{color}
\usepackage[usenames,dvipsnames,svgnames,table]{xcolor}
\usepackage{comment}
\usepackage{epsfig}
\newtheorem{lemma}{Lemma}[section]
\newtheorem{theorem}[lemma]{Theorem}
\newtheorem{corollary}[lemma]{Corollary}
\newtheorem*{definition}{Definition}

\renewcommand{\epsilon}{\varepsilon}
\renewcommand{\theta}{\vartheta}

\title[Multibrot sets]{Rational Parameter Rays of The Multibrot Sets}

\author[D. Eberlein]{Dominik Eberlein}
\address{Mayor-Huber-Weg 7, 82140 Olching}
\email{dominik.eberlein@kabelmail.de}

\author[S. Mukherjee]{Sabyasachi Mukherjee}
\address{Jacobs University Bremen, Campus Ring 1,  Bremen 28759, Germany}
\email{s.mukherjee@jacobs-university.de, sabyasachi.mukherjee@stonybrook.edu}

\author[D. Schleicher]{Dierk Schleicher}
\address{Jacobs University Bremen, Campus Ring 1,  Bremen 28759, Germany}
\email{d.schleicher@jacobs-university.de}

\subjclass[2010]{37F10, 37F20, 37F45, 30D05}

\date{\today}

\begin{document}
\begin{abstract}
We prove a structure theorem for the multibrot sets, which are the higher degree analogues of the Mandelbrot set, and give a complete picture of the landing behavior of the rational parameter rays and the bifurcation phenomenon. Our proof is inspired by previous works of Schleicher and Milnor on the combinatorics of the Mandelbrot set; in particular, we make essential use of combinatorial tools such as orbit portraits and kneading sequences. However, we avoid the standard global counting arguments in our proof and replace them by local analytic arguments to show that the parabolic and the Misiurewicz parameters are landing points of rational parameter rays.
\end{abstract}

\maketitle

\tableofcontents

\section{Introduction}

The dynamics of quadratic polynomials and their parameter space have been an area of extensive study in the past few decades. The seminal papers of Douady and Hubbard \cite{orsay-notes, DH82} laid the foundation of subsequent works on the topological and combinatorial structures of the Mandelbrot set, which is indeed one of the most complicated objects in the study of dynamical systems.

In this article, we study the multibrot sets $\mathcal{M}_d:= \{ c\in\mathbb{C} :$ The Julia set
of $z^d+c$ is connected$\}$, which are the immediate generalizations of the Mandelbrot set.

The principal goal of this paper is twofold.

(1) We give a new proof of the structure theorem for the Mandelbrot set consisting of a complete description of the landing properties of the rational parameter rays and the bifurcation of hyperbolic components. 

The classical proofs of the structure theorem of the Mandelbrot set can be found in the work of Douady and Hubbard \cite{DH82,orsay-notes}, Schleicher \cite{S1a}, Milnor \cite{M2a}. While Douady-Hubbard's proof involves a careful analysis of the Fatou coordinates and perturbation of parabolic points, more elementary and combinatorial proofs were given by Schleicher and Milnor. The non-trivial part of the structure theorem consists of showing that every parabolic and Misiurewicz parameter is the landing point of the required number of rays, which can be detected by looking at the corresponding dynamical plane. Both the combinatorial proofs are carried out by establishing bounds on the total number of parabolic parameters with given combinatorics and showing that at least (Milnor's proof) or at most (Schleicher's proof) two parameter rays at periodic angles can land at a parabolic parameter. The present proof follows a suggestion from Milnor and avoids the global counting argument. This is replaced by a combination of the combinatorial techniques of \cite{S1a} and Milnor \cite{M2a} (kneading sequences and orbit portraits, respectively) together with a monodromy argument.

(2) We write the proof of the structure theorem in more generality, namely for the multibrot sets. Due to existence of a single critical orbit, the passage from degree two to higher degrees does not add further technical complicacies. However, one needs to look at the combinatorics more carefully and modify some of the proofs in the higher degree unicritical case. 

There has been a growth of interest in the dynamics and parameter spaces of degree $d$ unicritical polynomials (Avila, Kahn and Lyubich \cite{KL,AKLS}, Milnor \cite{M3}, Ch\'{e}ritat \cite{Ch}) in the recent years. A different proof of landing of rational parameter rays of the multibrot sets can be found in \cite{PeRy}. Combinatorial classifications of post-critically finite polynomials in terms of external dynamical rays were given in \cite{BFH,Po}. To our knowledge, there is no written account of the structure theorem for the multibrot sets in the literature and this paper aims at bridging that gap.

It is also worth mentioning that the parameter spaces of unicritical anti-holomorphic polynomials were studied \cite{MNS,Sa,HS,IM} by some of the authors and several combinatorial and topological differences between the connectedness loci of unicritical polynomials and unicritical anti-polynomials (e.g. discontinuity of landing points of dynamical rays, bifurcation along arcs, non-local connectivity of the connectedness loci, non-trivial accumulation of parameter rays etc.) have been discovered. Hence, the present paper also serves as a precise reference for the corresponding properties in the holomorphic setting which facilitates the comparison between these two families.

\begin{figure}[ht!]
\includegraphics[scale=0.24]{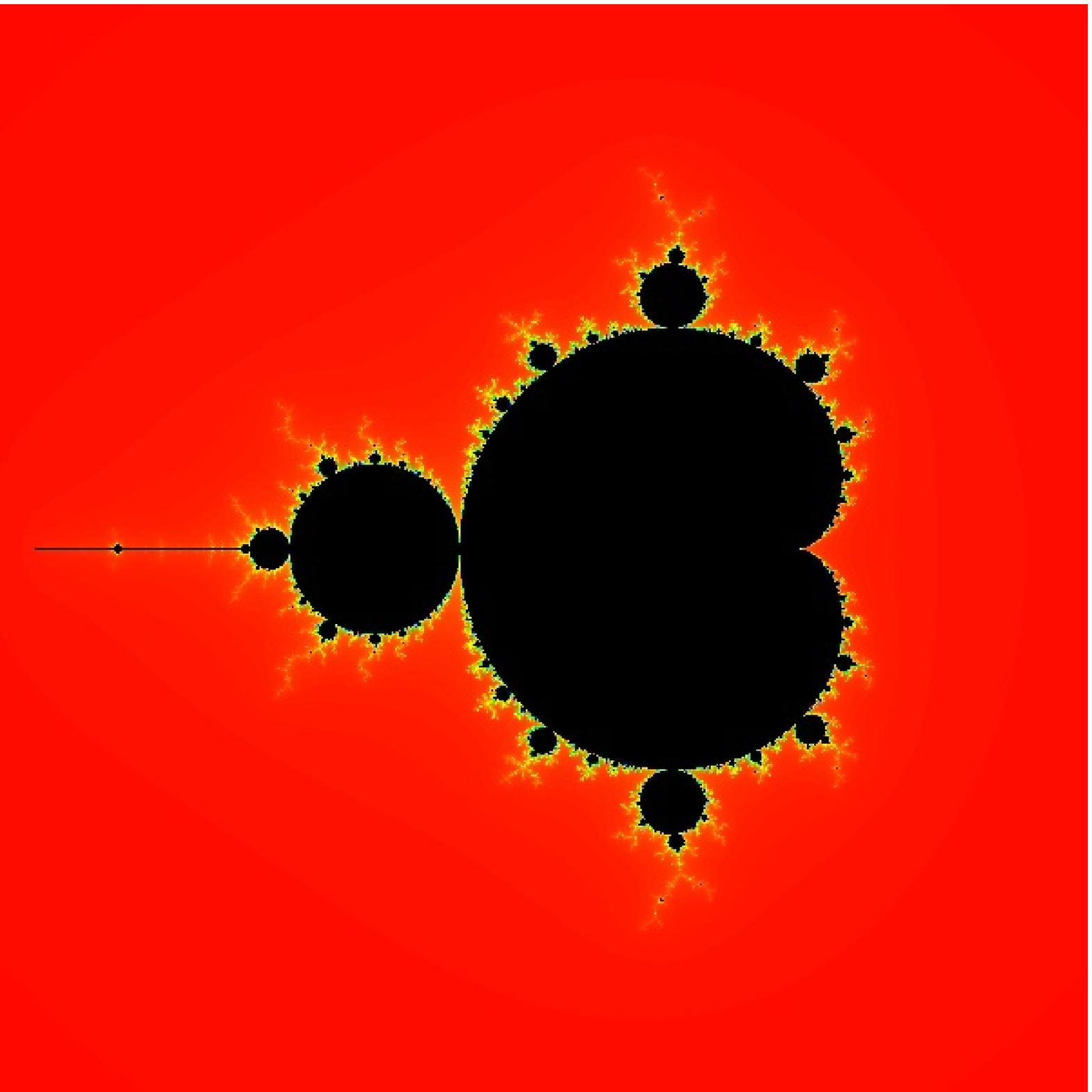} \hspace{1mm} \includegraphics[scale=0.24]{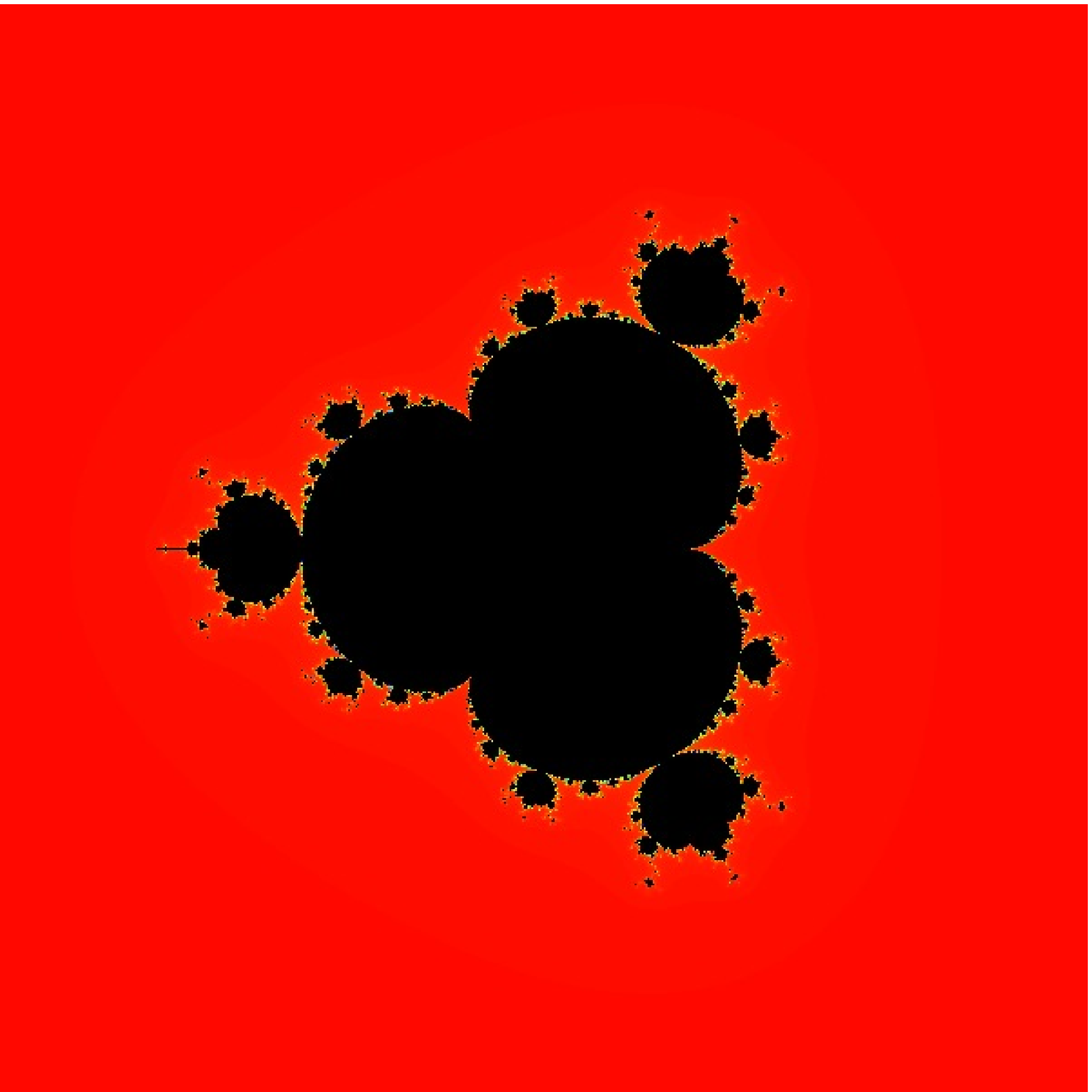}
\caption{Left: The Mandelbrot set, Right: The multibrot set $\mathcal{M}_4$.}
\end{figure}

Let us now review some background material and fix our
notations before stating the main theorem of this paper. For a general introduction into the field, see \cite{M1new}.
In the following, let $d \geq 2$ be an integer, fixed for the whole
paper. Let $f_{c}(z) = z^d + c$ be a unicritical polynomial of degree $d$ (any unicritical polynomial of degree $d$ can be affinely conjugated to a polynomial of the above form). The set of all points which remain bounded under all iterations of $f_c$ is called the Filled-in Julia set $K(f_c).$ The boundary of the Filled-in Julia set is defined to be the Julia set $J(f_c)$ and the complement of the Julia set is defined to be its Fatou set $F(f_c)$.

We measure angles in the fraction of a whole turn, i.e., our angles
are elements of $\mathbb{S}^1 \cong \mathbb{R}/\mathbb{Z}$. We
define for two different angles $\theta_1,\theta_2\in \mathbb{R}/\mathbb{Z}$, the
interval $\left(\theta_1,\theta_2\right)\subset \mathbb{R}/\mathbb{Z}$ as the open connected
component of $\mathbb{R}/\mathbb{Z} \setminus \{ \theta_1,\theta_2 \}$ that consists of the
angles we traverse if we move on $\mathbb{R}/\mathbb{Z}$ in anti-clockwise direction
from $\theta_1$ to $\theta_2$.  Finally, we denote the length of an
interval $I_1 \subset \mathbb{R}/\mathbb{Z}$ by $\ell(I_1)$ such that $\ell(\mathbb{S}_1)=1$.

It is well-known that there is a conformal map $\phi_c$ near $\infty$ such that $\displaystyle \lim_{z\to \infty} \phi_c(z)/z=1$ and $\phi_c\circ f_c(z)= \phi_c(z)^d$. $\phi_c$ extends as a conformal isomorphism to an equipotential containing $0$, when $c\notin \mathcal{M}_d$, and extends as a biholomorphism from $\hat{\mathbb{C}} \setminus K(f_c)$ onto $\hat{\mathbb{C}} \setminus \overline{\mathbb{D}}$ when $c \in \mathcal{M}_d$. The dynamical ray $\mathcal{R}_t^c$ of $f_c$ at an angle $t$ is defined as the pre-image of the radial line at angle $t$ under $\phi_c$.

The dynamical ray $\mathcal{R}_t^c$ at angle $t\in \mathbb{R}/\mathbb{Z}$ maps to the dynamical ray $\mathcal{R}_{dt}^c$ at angle $dt$ under $f_c$. We say that the dynamical ray $\mathcal{R}_t^c$ of $f_c$ lands if $\overline{\mathcal{R}_t^c} \cap K(f_c)$ is a singleton, and this unique point, if exists, is called the landing point of $\mathcal{R}_t^c$.  It is worth mentioning that for a complex polynomial (of degree $d$) with connected Julia set, every dynamical ray at a periodic angle (under multiplication by $d$) lands at a repelling or parabolic periodic point, and conversely, every repelling or parabolic periodic point is the landing point of at least one periodic dynamical ray \cite[\S 18]{M1new}. 

Every $\mathcal{M}_d$ is simply connected and there is a biholomorphic map $\Phi$
from $\mathbb{C} \setminus \mathcal{M}_d$ onto the complement of the closed unit disk $\mathbb{C} \setminus \overline{\mathbb{D}}$ (see \cite{orsay-notes} for a proof in the Mandelbrot set case). The \emph{parameter ray with angle $\theta$} is defined as the set
$\mathcal{R}_\theta :=\{ \Phi^{-1}\left(r^{2\pi i\theta}\right), r>1 \}$
If $\displaystyle \lim_{r \rightarrow 1^+} \Phi^{-1}\left(r^{2\pi i\theta}\right)$ exists, we say
that $\mathcal{R}_\theta$ lands. The parameter rays of the multibrot sets have been profitably used to reveal the combinatorial and topological structure of the multibrot sets. We refer the readers to \cite{lavaurs_systemes_1989} for a combinatorial model of the Mandelbrot set in terms of the external parameter rays. 

For a periodic orbit $\mathcal{O} = \{ z_1, z_2, \cdots, z_p \}$ denote
by $\lambda \left(f,\mathcal{O}\right):= \lambda(f,z):=\frac d{dz}f^{\circ p}(z)$ the
\emph{multiplier of $\mathcal{O}$}.  In the case $f=f_c$ we
write $\lambda(c,\mathcal{O})$ and $\lambda\left(c,z\right)$ instead
of $\lambda\left(f_c,\mathcal{O}\right)$ and $\lambda\left(f_c,z\right)$.

A parameter $c$ in $\mathcal{M}_d$ is called parabolic if the corresponding polynomial has a (necessarily unique) parabolic cycle. A parabolic parameter $c$ is called essential if at least two dynamical rays land at each point of the (unique) parabolic cycle of $f_c$. On the other hand, $c$ is called a non-essential parabolic parameter if exactly one dynamical ray  lands at each point of the (unique) parabolic cycle of $f_c$.

The main theorem of this paper is the following:  

\begin{theorem}[Structure Theorem for Multibrot Sets]
\label{t:struct}
For the Multibrot set~$\mathcal{M}_d$ and the associated parameter rays the
following statements hold:
\begin{enumerate}
\item 
Every parameter ray at a periodic angle lands at a parabolic parameter
of $\mathcal{M}_d$.
\item 
Every essential (resp.\ non-essential) parabolic parameter of $\mathcal{M}_d$ is the 
landing point of exactly two (resp.\ one) parameter ray(s) at periodic angle(s).
\item 
A parameter ray at a periodic angle $\theta$ lands at a parabolic
parameter $c$ if and only if, in the dynamics of $f_c$, the dynamical ray at angle
$\theta$ lands at the parabolic orbit and is one of its characteristic rays.
\item 
Every parameter ray at a pre-periodic angle lands at a post-critically pre-periodic parameter
of $\mathcal{M}_d$.
\item 
Every post-critically pre-periodic parameter is the landing point of at least one
parameter ray at a pre-periodic angle.
\item
A parameter ray at a pre-periodic angle $\theta$ lands at a Misiurewicz parameter
$c$ if and only if, in the dynamics of $c$, the dynamical ray at angle $\theta$ lands
at the critical value.
\item 
Every hyperbolic component of period greater than one of $\mathcal{M}_d$ has exactly one root
and $d-2$ co-roots and the period one hyperbolic component has exactly $d-1$ co-roots and no root.
\end{enumerate}
\end{theorem}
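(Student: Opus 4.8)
The plan is to read off the roots and co-roots from the boundary behaviour of the multiplier map and then classify them combinatorially using the orbit portrait machinery and parts (1)--(3) of the theorem. Throughout, let $W$ be a hyperbolic component of period $n$ and let $\lambda_W\colon W\to\mathbb{D}$ send $c$ to the multiplier of its attracting $n$-cycle.

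First I would show that $\lambda_W$ is a proper holomorphic map of degree $d-1$. Holomorphy holds because on the simply connected set $W$ the attracting cycle, and hence its multiplier, can be followed holomorphically; properness holds because $|\lambda_W(c)|\to 1$ as $c\to\partial W$. To compute the degree I would evaluate the order of vanishing at the unique center $c_*\in W$, the parameter whose critical point $0$ is periodic of period $n$. Writing the multiplier of the cycle $\{z_0(c),\dots,z_{n-1}(c)\}$ as the product of the factors $d\,z_i(c)^{d-1}$, exactly one cycle point passes through the critical point $0$ at $c_*$; the corresponding factor then vanishes to order $d-1$ while the others stay nonzero, so $\lambda_W$ has a zero of order $d-1$ at $c_*$ and no other zero. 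Since $W$ is simply connected, $\lambda_W$ is a proper holomorphic map between disks, hence a branched covering, and properness forces $\deg\lambda_W=d-1$. Riemann--Hurwitz then gives total branching $d-2$, all concentrated at $c_*$, so every nonzero value is a regular value. Consequently $\lambda_W$ extends to a degree-$(d-1)$ branched covering $\overline{W}\to\overline{\mathbb{D}}$ that is unramified over $\partial\mathbb{D}$, and $\lambda_W^{-1}(1)\cap\partial W$ consists of exactly $d-1$ distinct parameters.

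These $d-1$ parameters are precisely the parabolic parameters on $\partial W$ whose $n$-cycle has multiplier $+1$, i.e.\ exactly the roots and co-roots of $W$; by definition each is either \emph{essential} (a root) or \emph{non-essential} (a co-root), and by parts (1)--(3) a root is the landing point of exactly two parameter rays while a co-root is the landing point of exactly one. It therefore remains to count essential parameters: I must show that exactly one of these $d-1$ parameters is essential when $n\geq 2$ and none is essential when $n=1$. For this I pass to the dynamical plane. At such a parameter the characteristic parabolic point is essential precisely when at least two dynamical rays land at it (valence $\geq 2$) and non-essential when exactly one ray lands (valence $1$); moreover the $d-1$ parameters correspond to the $d-1$ fixed points of the degree-$d$ first-return map $f_c^{\circ n}$ on the critical Fatou component, equivalently to the $d-1$ fixed points of $\vartheta\mapsto d\vartheta$ on its boundary circle. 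In the period-one case the $d-1$ fixed external rays at angles $j/(d-1)$ land bijectively at the $d-1$ distinct fixed points, so the (double) parabolic fixed point receives exactly one ray at each of the $d-1$ parameters; hence all of them are co-roots and there is no root. For $n\geq 2$ I would argue that the position of the critical value singles out a unique one of the $d-1$ candidate configurations at which two period-$n$ rays land at the characteristic point (whether of primitive or of satellite type), while the remaining $d-2$ carry the trivial valence-one portrait.

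The hard part will be exactly this last count: proving rigorously that for $n\geq 2$ precisely one of the $d-1$ boundary fixed points produces a valence-$\geq 2$ orbit portrait and the other $d-2$ produce the trivial portrait. I expect to handle it by analysing how the access of the critical value to the boundary of the critical Fatou component selects the characteristic point, so that the critical value lies in the characteristic sector of exactly one of the $d-1$ fixed points; combined with the characterisation of characteristic rays from part (3), this pins down a unique essential parameter and identifies the remaining ones as co-roots.
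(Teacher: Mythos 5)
There is a genuine gap, and it is structural: the statement to be proved is the full seven-part Structure Theorem, but your proposal only addresses item (7) and explicitly takes items (1)--(3) as inputs, while items (4)--(6) are not discussed at all. In the paper, items (1)--(6) are the substance of the result: (1) requires the landing of periodic parameter rays (via stability of orbit portraits and the finiteness of parabolic parameters of given ray period), (2)--(3) require the wake construction, the orbit separation lemmas, and, crucially, the kneading-sequence induction of Section 5 to show that \emph{at most} two periodic rays land at a satellite root, and (4)--(6) are the pre-periodic analogues. Worse, you cannot consistently treat (1)--(3) as black boxes when proving (7): in the paper the upper bound ``at most $d-1$ roots and co-roots'' (Theorem \ref{t:hmd}) is obtained by showing that two distinct parameters in $E$ realizing the same boundary fixed point $z^{(i)}$ would be landing points of the same parameter ray, which uses Lemma \ref{At least two rays} and Corollary \ref{c:nonesspar}; conversely, the proof of (2) in the satellite case (Theorems \ref{t:parks} and \ref{t:exactlytwo}) uses (7) to count the angles of period $k<n$ in an interval as a multiple of $d$. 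Items (2) and (7) are established jointly by an induction on the ray period, so deriving (7) from (1)--(3) as you propose is circular as stated.

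Within your sketch of item (7) there are also local gaps. You assume the center is unique in order to conclude $\deg\lambda_W=d-1$ from the local order of vanishing; in the paper uniqueness of the center is a \emph{consequence} of the degree count (Corollary \ref{Unique center}), and the degree is pinned down by the two-sided estimate described above, not by Riemann--Hurwitz. Your step ``$\lambda_W$ extends to a branched covering of $\overline{W}$ unramified over $\partial\mathbb{D}$, hence $\lambda_W^{-1}(1)\cap\partial W$ has exactly $d-1$ points'' is not justified: the continuous extension $\overline{H}\to\overline{\mathbb{D}}$ is only known to be surjective a priori, and controlling the fiber over $1$ is exactly what the counting argument with the boundary fixed points $z^{(1)},\dots,z^{(d-1)}$ of the characteristic Fatou component (Lemma \ref{l:brycf}) accomplishes. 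Finally, the step you yourself flag as ``the hard part'' --- that exactly one of the $d-1$ boundary fixed points carries a non-trivial portrait for $n\geq 2$ --- is left unproved; the paper settles it with the Hubbard tree: the unique intersection point of the tree with the boundary of the characteristic Fatou component disconnects the Julia set, and a second disconnecting fixed point would contradict forward invariance of the tree. Without that argument, and without the content of Sections 2--5, the proposal does not prove the theorem.
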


Let us now spend a few words on the organization of the paper. In Section \ref{orbit}, we discuss the basic properties of orbit portraits, which is a combinatorial tool to describe the pattern of all periodic dynamical rays landing at different points of a periodic cycle. This includes a complete description of the orbit portraits and a realization theorem for `formal orbit portraits'. This allows us to define `wakes', which play an important role in the combinatorial structure of the multibrot sets. In Section \ref{s:pardyn}, we explore the duality between parameter rays and dynamical rays. Subsection \ref{Hubbard} contains some basic dynamical and topological properties of Hubbard trees, which contain crucial information about post-critically finite polynomials. This is followed by a preliminary description of centers, roots and co-roots of hyperbolic components and the structure of landing patterns of parameter rays at periodic angles. In Section \ref{s:hyp2} and Section \ref{s:kneading}, we complete the proof of Theorem \ref{t:struct} by giving an exact count of the number of parameter rays at periodic angles landing at parabolic parameters. In Section \ref{s:preper}, we collect the landing properties of parameter rays at pre-periodic angles; the proofs of these results are direct generalizations of the corresponding results for the Mandelbrot set, so we omit the proofs here.

\dedicatory{We thank John Milnor for fruitful discussions and useful advice. The second author gratefully acknowledges the support of Deutsche Forschungsgemeinschaft DFG during this work. The paper is based on the diploma thesis of the first author in spring 1999 at TUM \cite{E2}}.

\section{Orbit Portraits}\label{orbit}
Orbit portraits of quadratic complex polynomials were first introduced by Lisa Goldberg and John Milnor in \cite{Go,GM1,M2a} as a combinatorial tool to describe the pattern of all periodic external rays landing at different points of a periodic cycle. Milnor proved that any collection of finite subsets of $\mathbb{Q}/\mathbb{Z}$ satisfying some conditions indeed occur as the orbit portrait of some quadratic complex polynomial. The usefulness of orbit portraits stems from the fact that these combinatorial objects contain substantial information about the connection between the dynamical and the parameter planes.

\subsection{Definitions and Properties}
In this section, we define orbit portraits for unicritical polynomials of arbitrary degree and prove their basic properties. Finally, we prove an analogous realization theorem for these generalized orbit portraits.

\begin{definition}[Orbit portraits]
Let $\mathcal{O} = \{ z_1 , z_2 ,\cdots,z_p\}$ be a periodic cycle of some unicritical polynomial $f$. If a dynamical ray $\mathcal{R}_t^f$ at a rational angle $t \in \mathbb{Q}/\mathbb{Z}$ lands at some $z_i$; then for all j, the set $\mathcal{A}_j$ of the angles of all the dynamical rays landing at $z_j$ is a non-empty finite subset of $\mathbb{Q}/\mathbb{Z}$. The collection $\{ \mathcal{A}_1 , \mathcal{A}_2, \cdots, \mathcal{A}_p \}$ will be called the \emph{Orbit Portrait} $\mathcal{P(O)}$ of the orbit $\mathcal{O}$ corresponding to the polynomial $f$.
\end{definition}

An orbit portrait $\mathcal{P(O)}$ will be called trivial if only one ray lands at each point of $\mathcal{O}$; i.e. $\vert \mathcal{A}_j \vert = 1,$ for all $j$. Otherwise, the orbit portrait will be called non-trivial. The portrait $\mathcal{P} = \{\{ 0 \}\}$ is also called non-trivial.

\begin{lemma}[Orientation Preservation] For any polynomial $f$, if the external ray $\mathcal{R}_t$ at angle t lands at a point $z \in J(f)$, then the image ray $f \left(\mathcal{R}_t\right) = \mathcal{R}_{dt}$ lands at the point $f(z)$. Furthermore, multiplication by $d$ maps every $\mathcal{A}_j$ bijectively onto $\mathcal{A}_{j+1}$ preserving the cyclic order of the angles around $\mathbb{R}/\mathbb{Z}$. 
\end{lemma}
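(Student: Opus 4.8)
The plan is to prove this Orientation Preservation lemma in two parts, corresponding to its two sentences. The first sentence is essentially a consequence of the functional equation for the B\"ottcher map, so I would begin there. Recall that $\phi_c$ conjugates $f_c$ to $z \mapsto z^d$ on the basin of infinity, so the dynamical ray $\mathcal{R}_t$ is carried by $f$ to $\mathcal{R}_{dt}$ as a set; this is already recorded in the excerpt. If $\mathcal{R}_t$ lands at $z \in J(f)$, meaning $\overline{\mathcal{R}_t} \cap K(f) = \{z\}$, then continuity of $f$ gives $f(\overline{\mathcal{R}_t}) = \overline{f(\mathcal{R}_t)} = \overline{\mathcal{R}_{dt}}$, and since $f(z) \in J(f) = \partial K(f)$, the accumulation set of $\mathcal{R}_{dt}$ on $K(f)$ must contain $f(z)$. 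One still needs that $\mathcal{R}_{dt}$ \emph{lands} (i.e.\ that its accumulation set is a single point), but this is where we invoke the quoted fact from \cite[\S 18]{M1new}: rays at periodic or preperiodic angles land, and here $t$ is rational so $dt$ is too; thus $\mathcal{R}_{dt}$ lands, and by the accumulation argument it must land at $f(z)$.

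For the second sentence, I would first observe that multiplication by $d$ sends each angle landing at $z_j$ to an angle landing at $z_{j+1} = f(z_j)$ by the first part, so it maps $\mathcal{A}_j$ \emph{into} $\mathcal{A}_{j+1}$. Conversely, every ray landing at $z_{j+1}$ has a unique preimage ray under $f$ landing at $z_j$ with angle $1/d$ times as large (modulo the $d$-fold ambiguity), so the map is onto; combined with the fact that $t \mapsto dt$ is exactly $d$-to-one on the circle but here the relevant rays landing at $z_j$ all lie in distinct fibers (no two distinct rays landing at the same point can differ by a multiple of $1/d$ without colliding at the critical value — this is the subtlety), we get a bijection $\mathcal{A}_j \to \mathcal{A}_{j+1}$. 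The cleanest route is a counting/injectivity argument: since $f$ is a local homeomorphism away from the critical point and distinct rays land at distinct accesses, the induced map on landing angles is injective on $\mathcal{A}_j$, and surjectivity onto $\mathcal{A}_{j+1}$ follows by pulling back each ray landing at $z_{j+1}$.

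For the order-preservation claim, I would argue topologically: the rays landing at a common point $z_j$ divide the plane into sectors, inducing a cyclic order on $\mathcal{A}_j$ that agrees with the cyclic order of angles in $\mathbb{R}/\mathbb{Z}$ (since the B\"ottcher coordinate is orientation-preserving near infinity, the rays emanate in the same cyclic order as their angles). The map $f$ is holomorphic, hence orientation-preserving, so it carries the cyclic arrangement of sectors around $z_j$ to the cyclic arrangement around $z_{j+1}$ in an order-preserving way; since multiplication by $d$ on $\mathbb{R}/\mathbb{Z}$ is likewise an orientation-preserving covering, the two cyclic orders match up.

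The main obstacle I anticipate is the injectivity of $t \mapsto dt$ when restricted to $\mathcal{A}_j$. A priori two distinct angles in $\mathcal{A}_j$ could differ by $1/d$ and map to the same angle in $\mathcal{A}_{j+1}$; ruling this out requires knowing that the critical point is not on the orbit $\mathcal{O}$ (true, since $\mathcal{O}$ is a repelling or parabolic cycle and the critical orbit of a unicritical polynomial cannot be periodic and repelling/parabolic simultaneously) so that $f$ is a local biholomorphism at each $z_j$ and thus genuinely injective on a neighborhood, forcing distinct landing rays at $z_j$ to map to distinct landing rays at $z_{j+1}$. Making this local-injectivity-to-global-injectivity-on-$\mathcal{A}_j$ step precise, together with the order-preservation, is the technical heart of the argument; everything else is a direct consequence of the B\"ottcher conjugacy and the landing theorem quoted above.
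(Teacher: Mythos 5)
Your argument is correct and matches the paper's proof in all essentials: the bijection and the preservation of cyclic order are obtained exactly as in the paper, from the fact that $f$ is a local orientation-preserving biholomorphism near each $z_j$ (the critical point cannot lie on $\mathcal{O}$), so distinct landing rays at $z_j$ go to distinct landing rays at $z_{j+1}$ in the same cyclic arrangement, with surjectivity by pulling back. The only divergence is in the first sentence, where the paper does not need the landing theorem for rational angles: since $\mathcal{R}_t$ lands it meets no pre-critical point, hence neither does $\mathcal{R}_{dt}$, and compactness plus continuity give $\overline{\mathcal{R}_{dt}} \subseteq f\left(\overline{\mathcal{R}_t}\right) = \mathcal{R}_{dt} \cup \{f(z)\} \cup \{\infty\}$, so the image ray lands at $f(z)$ automatically --- an argument that also covers irrational $t$, which the statement as written allows.
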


\begin{proof}
Since the ray $\mathcal{R}_{t}$ lands at $z$, it must not pass through any pre-critical point of $f$, hence the same is true for the image ray $\mathcal{R}_{dt}$. Therefore, the image ray is well-defined all the way to the Julia set and continuity implies that it lands at $f(z)$.
For the second part, observe that $f$ is a local orientation-preserving diffeomorphism from a neighborhood of $z_i$ to a neighborhood of $z_{i+1}$. Hence, it sends the set of rays landing at $z$ bijectively onto the set of rays landing at $f(z)$ preserving their cyclic order.
\end{proof}

\begin{lemma}[Finitely Many Rays] If an external ray at a rational angle lands at some point of a periodic orbit $\mathcal{O}$ corresponding to a polynomial, then only finitely many rays land at each point of $\mathcal{O}$. Moreover, all the rays landing at the periodic orbit have equal period and the ray period can be any multiple of $p$.
\end{lemma}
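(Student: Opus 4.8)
The plan is to treat the three assertions separately: finiteness is the analytic core of the statement, while the equality of periods and the fact that the common period is a multiple of~$p$ follow by soft combinatorial arguments from the Orientation Preservation lemma.

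First I would prove finiteness. Fix a point $z_i$ of $\mathcal{O}$; since it lies on a periodic orbit it is a repelling or parabolic periodic point, and $g := f^{\circ p}$ fixes it. By Orientation Preservation, $g$ permutes the set of rays landing at $z_i$ and preserves their cyclic order around $z_i$. To bound their number I would pass to the local normal form of $g$ at $z_i$: a linearizing coordinate in the repelling case, in which $g$ becomes $w \mapsto \lambda w$ with $|\lambda| > 1$, and an attracting/repelling petal decomposition via Fatou coordinates in the parabolic case. In either coordinate the rays landing at $z_i$ enter a fixed neighborhood along pairwise disjoint arcs and are permuted by the locally expanding (resp.\ parabolic) map; since distinct landing rays are disjoint and each subtends a definite sector that the dynamics strictly expands, only finitely many can coexist. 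This is the one place where genuine local-dynamical input is required, and I expect it to be the main obstacle; it is, however, standard (cf.\ \cite[\S 18]{M1new}).

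Granting finiteness, rationality and periodicity of every landing angle come for free: the set $\mathcal{A}_i$ is finite and is permuted by multiplication by $d^{p}$ (the action of $g$), so every such angle is fixed by multiplication by $d^{kp}$ for some $k$ and is therefore periodic under multiplication by~$d$, hence rational. The same conclusion holds at every $z_j$ because multiplication by $d$ carries $\mathcal{A}_j$ bijectively onto $\mathcal{A}_{j+1}$.

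Finally I would establish the equality of periods. Since multiplication by $d$ is a bijection $\mathcal{A}_j \to \mathcal{A}_{j+1}$, all the sets $\mathcal{A}_j$ share a common cardinality $v$, and multiplication by $d^{p}$ is a cyclic-order-preserving bijection of the finite cyclically ordered set $\mathcal{A}_i$. Any such bijection is a rotation of $\mathcal{A}_i$ by a fixed number of steps, so all of its orbits have a common length $r$; hence every angle in $\mathcal{A}_i$ has period exactly $rp$ under multiplication by~$d$, the factor $p$ appearing because an iterate $d^{k}$ can fix an element of $\mathcal{A}_i$ only when $p \mid k$, as otherwise its image lands in a different $\mathcal{A}_j$. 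Conjugating by the bijections $\mathcal{A}_j \to \mathcal{A}_{j+1}$ transports this equality to every point of the orbit, so all rays landing at $\mathcal{O}$ share the period $rp$, a multiple of~$p$. That every multiple of~$p$ is actually attained is a realizability statement, which I would justify by exhibiting orbit portraits of prescribed rotation number (equivalently, by appealing to the realization theorem proved later in this section), the fixed-point case $p=1$ already showing that all values of $r$ occur.
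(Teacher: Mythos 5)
The paper does not actually prove this lemma: its ``proof'' is the single line ``This is well-known. See \cite{M1new} for example.'' So there is no argument in the paper to match yours step by step; both you and the authors ultimately lean on \cite[\S 18]{M1new}. Your combinatorial superstructure --- injectivity of multiplication by $d$ on each $\mathcal{A}_j$ because $f$ is a local homeomorphism at the non-critical points of $\mathcal{O}$, the observation that an order-preserving permutation of a finite cyclically ordered set is a rotation so that all orbits have a common length $r$, the divisibility $p \mid k$ forced by the uniqueness of landing points, and the realization of every multiple $rp$ --- is correct and is essentially how the standard reference organizes the soft part.

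The one step you try to argue directly, however, is the one that does not work as stated. The claim that finitely many rays can land because ``each subtends a definite sector that the dynamics strictly expands'' is not a proof: the return map is conformal at $z_i$, so it \emph{preserves} the angles of the sectors it permutes, and a cyclic-order-preserving bijection of an infinite subset of the circle is perfectly possible (restrict an irrational rotation to one of its orbits). Disjointness plus order preservation plus expansion of the ambient dynamics therefore do not by themselves exclude infinitely many landing rays. The standard argument runs in the opposite order from yours: one first shows, via the linearizing coordinate (or Fatou coordinate) and the quotient torus $U^*/f^{\circ p}$, on which the landing rays descend to pairwise disjoint \emph{essential} simple closed curves that must be mutually parallel, that every angle landing at $z_i$ is periodic under multiplication by $d$ with a period $rp$ common to all of them; finiteness is then immediate because there are at most $d^{rp}-1$ angles of period $rp$. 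Deriving periodicity \emph{from} finiteness, as you propose, leaves the finiteness step unsupported. Since you explicitly flag this step as the main obstacle and defer to \cite[\S 18]{M1new}, the write-up is salvageable, but the logical order should be reversed so that the cited local-dynamical input is the periodicity statement, not a free-standing finiteness claim.
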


\emph{Remark.} An angle $t \in \mathbb{R}/\mathbb{Z}$ (resp. a ray $\mathcal{R}_t$) is periodic under multiplication by $d$ (resp. under f) if and only if $t = a/b$ (in the reduced form), for some $ a, b\in \mathbb{N}$ with $\left(b,d\right)=1.$ On the other hand, $t$ (resp. $\mathcal{R}_t$) is strictly pre-periodic if and only if $t = a/b$ (in the reduced form), for some $ a, b\in \mathbb{N}$ with $\left(b,d\right) \neq 1.$

\begin{proof}
This is well-known. See \cite{M1new} for example.
\end{proof}

So far all our discussions hold for general polynomials of degree $d$. Now we want to investigate the consequences of unicriticality on orbit portraits. The next two lemmas are essentially due to Milnor, who proves them for quadratic polynomials in \cite{M2a}.

\begin{lemma}[The Critical Arc]
Let $f$ be a unicritical polynomial of degree $d$ and $\mathcal{O} = \{ z_1 , z_2 ,\cdots,z_p\}$ be an orbit of period $p$. Let $\mathcal{P(O)} = \{ \mathcal{A}_1 , \mathcal{A}_2, \cdots, \mathcal{A}_p \}$ be the corresponding orbit portrait. For each $j \in \{ 1,2,\cdots,p\}$, $\mathcal{A}_j$ is contained in some arc of length $1/d$ in $\mathbb{R}/\mathbb{Z} $. Thus, all but one connected component of $\left(\mathbb{R}/\mathbb{Z}\right) \setminus \mathcal{A}_j $ map bijectively to some connected component of $\left(\mathbb{R}/\mathbb{Z}\right) \setminus \mathcal{A}_{j+1}$ and the remaining complementary arc of $\left(\mathbb{R}/\mathbb{Z}\right) \setminus \mathcal{A}_j $ covers one particular complementary arc of $\mathcal{A}_{j+1}$ $d$-times and all others $(d-1)$-times.
\end{lemma}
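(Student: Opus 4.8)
The plan is to combine the order-preserving bijection furnished by the Orientation Preservation Lemma with a bookkeeping on the lengths of the complementary arcs of $\mathcal{A}_j$, and then to inject the unicriticality of $f$ through a single ``critical sector'' that absorbs all of the branching. First I would record the reductions. Since some dynamical ray lands at $\mathcal{O}$, the cycle is repelling or parabolic, so the critical point $0$ does not lie on $\mathcal{O}$; consequently no $z_j$ equals the critical value $c=f(0)$ either, and $f$ is an orientation-preserving local homeomorphism near each $z_j$. Writing $m_d$ for multiplication by $d$, the Orientation Preservation Lemma says $m_d$ carries $\mathcal{A}_j$ bijectively and cyclic-order-preservingly onto $\mathcal{A}_{j+1}$, so that $k:=\lvert\mathcal{A}_j\rvert$ is independent of $j$. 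I would write $\mathcal{A}_j=\{\theta_1<\dots<\theta_k\}$ in cyclic order, let $I_i=(\theta_i,\theta_{i+1})$ be the complementary arcs with lengths $\ell_i=\ell(I_i)$, and note that order preservation forces $m_d(I_i)$ to be exactly the complementary arc $I_i'$ of $\mathcal{A}_{j+1}$ bounded by $d\theta_i$ and $d\theta_{i+1}$.

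The combinatorial core is a length identity. Since $m_d$ carries $I_i$ onto $I_i'$ and the endpoints $d\theta_i,d\theta_{i+1}$ are distinct, the image length is the fractional part $\ell(I_i')=d\ell_i-\lfloor d\ell_i\rfloor$. Summing over $i$, using $\sum_i\ell_i=1$ together with the fact that the arcs $I_i'$ tile the circle (so $\sum_i\ell(I_i')=1$), yields $\sum_i\lfloor d\ell_i\rfloor=d-1$. Each summand is a nonnegative integer, so this is a strong constraint; for $d=2$ it already forces exactly one arc with $\ell_i\ge 1/2$, but for $d\ge 3$ it alone permits several ``medium'' arcs with $\lfloor d\ell_i\rfloor\ge 1$, and this is precisely the configuration that unicriticality must exclude.

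The key step, and the one I expect to be the main obstacle, is the dynamical input. The rays at the angles of $\mathcal{A}_j$ together with $z_j$ cut the plane into $k$ sectors $S_1,\dots,S_k$, where $S_i$ collects the rays with angles in $I_i$. The critical point $0$, lying off these rays, is interior to exactly one sector $S_{i^*}$. For $i\ne i^*$ the sector $S_i$ contains no critical point, so $f|_{S_i}$ is an unbranched proper map onto the sector $S_i'$ at $z_{j+1}$; as $S_i'$ is simply connected this covering is trivial, hence $f$ maps $S_i$ homeomorphically onto $S_i'$. Since $f$ is conjugate near infinity to $m_d$ on the circle of angles at infinity, while the accesses to infinity inside $S_i$ form the arc $I_i$ and those inside the embedded sector $S_i'$ form the arc $I_i'$ of length $<1$, the homeomorphism can restrict to $m_d\colon I_i\to I_i'$ injectively only when $\ell_i<1/d$, whence $\lfloor d\ell_i\rfloor=0$. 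The delicate point here is the careful justification that a non-critical sector cannot wind around $z_{j+1}$, an argument-principle / local-degree statement that is where the passage from $d=2$ to general $d$ genuinely requires the single critical point.

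Finally I would assemble the pieces. The identity $\sum_i\lfloor d\ell_i\rfloor=d-1$ with $\lfloor d\ell_i\rfloor=0$ for all $i\ne i^*$ forces $\lfloor d\ell_{i^*}\rfloor=d-1$, so $\ell_{i^*}\ge (d-1)/d$ and $\sum_{i\ne i^*}\ell_i\le 1/d$; thus $\mathcal{A}_j$ lies in the arc complementary to $I_{i^*}$, of length at most $1/d$, which proves the first assertion. The covering statements then read off directly: for $i\ne i^*$ the homeomorphism $f\colon S_i\to S_i'$ maps $I_i$ bijectively onto $I_i'$, whereas the critical arc $I_{i^*}$, having $\lfloor d\ell_{i^*}\rfloor=d-1$, wraps so as to cover its own image arc $I_{i^*}'$ exactly $d$ times and every other complementary arc of $\mathcal{A}_{j+1}$ exactly $d-1$ times. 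The degenerate portrait $\{\{0\}\}$ (the case $k=1$) fits the same bookkeeping, with a single complementary arc of full length playing the role of the critical arc.
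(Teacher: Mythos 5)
Your argument is correct, and its dynamical core is the same as the paper's: unicriticality means that at most one of the sectors based at $z_j$ can contain a critical point, so at most one complementary arc of $\mathcal{A}_j$ can have length $\geq 1/d$. Where you differ is in the combinatorial packaging. The paper fixes a candidate arc $(\alpha,\beta)$ of length $<1/d$ and derives a contradiction from a hypothetical point of $\mathcal{A}_j$ outside it, by exhibiting \emph{two} complementary arcs of length $>1/d$ and hence two sectors each forced to contain a critical point. You instead prove the global identity $\sum_i\lfloor d\ell_i\rfloor=d-1$ from $\sum_i\ell_i=\sum_i\ell(I_i')=1$ and then localize the entire sum in the unique critical sector. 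This buys you something the paper's proof does not explicitly deliver: the quantities $\lfloor d\ell_i\rfloor$ immediately give the covering multiplicities in the second half of the statement (the critical arc covers its image arc $d$ times and all others $d-1$ times, the remaining arcs map bijectively), whereas the paper only proves the containment claim and lets the ``Thus\dots'' clause follow implicitly. The one step you rightly flag as delicate --- that a critical-point-free sector maps homeomorphically onto a single sector at $z_{j+1}$, equivalently that the number of critical points in a sector $S_i$ counted with multiplicity is $\lfloor d\ell_i\rfloor$ --- is invoked at essentially the same level of rigor in the paper (``the two corresponding sectors map to the whole plane\dots therefore both contain a critical point''); it is the standard Goldberg--Milnor sector-counting lemma, so no gap is created that the paper does not also leave to the reader. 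Your separate treatment of the degenerate portrait $\{\{0\}\}$ is appropriate.
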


\begin{proof}
Let $\theta \in \mathcal{A}_j$. Let $\beta$ be the element of $\mathcal{A}_j$ that lies in $\left[\theta , \theta+1/d\right)$ and is closest to $\left(\theta+1/d\right).$ Similarly, let $\alpha$ be the member of $\mathcal{A}_j$ that lies in $\left(\theta-1/d , \theta \right]$ and is closest to $\left(\theta-1/d\right).$ Note that there is no element of $\mathcal{A}_j$ in $\left(\beta , \beta+1/d\right];$ otherwise the orientation preserving property of multiplication by $d$ would be violated. Similarly, $\left[\alpha-1/d , \alpha \right)$ contains no element of $\mathcal{A}_j$. Also, the arc $\left( \alpha , \beta \right)$ must have length less than $1/d$. We will show that the entire set $\mathcal{A}_j$ is contained in the arc $\left( \alpha , \beta \right)$ of length less than $1/d$.

\begin{figure}[!ht]
\centering
\includegraphics[scale=0.4]{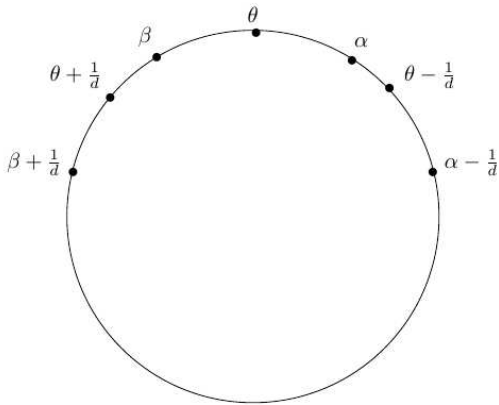}
\caption{No element of $\mathcal{A}_j$ outside $\left(\alpha, \beta\right)$.}
\end{figure}

If there exists some $\gamma \in \mathcal{A}_j$ lying outside $\left( \alpha , \beta \right)$, then $\gamma \in \left( \beta +1/d , \alpha -1/d\right)$. Therefore, there exist at least two complementary arcs of $\mathbb{R}/\mathbb{Z}\setminus \mathcal{A}_j$ of length greater than $1/d$. Both these arcs cover the whole circle and some other arc(s) of $\mathbb{R}/\mathbb{Z}\setminus \mathcal{A}_{j+1}$ under multiplication by $d$. In the dynamical plane of $f$, the two corresponding sectors (of angular width greater than $1/d$) map to the whole plane and some other sector(s) under the dynamics. Therefore, both these sectors contain at least one critical point of $f$. This contradicts the unicriticality of $f$.

This proves that the entire set $\mathcal{A}_j$ is contained in the arc $\left( \alpha , \beta \right)$ of length less than $1/d$.
\end{proof}

\emph{Remark.} Following Milnor, the largest component of $\left(\mathbb{R}/\mathbb{Z}\right) \setminus \mathcal{A}_j $ (of length greater than $(1 - 1/d))$ will be called the \emph{critical arc} of $\mathcal{A}_j$ and the complementary component of $\mathcal{A}_{j+1}$ that is covered $d$-times  by the critical arc of $\mathcal{A}_j$, will be called the \emph{critical value arc} of $\mathcal{A}_{j+1}.$ In the dynamical plane of $f$, the two rays corresponding to the two endpoints of the critical arc of $\mathcal{A}_j$ along with their common landing point bound a sector containing the unique critical point of $f$. This sector is called a \emph{critical sector}. Analogously, the sector bounded by the two rays corresponding to the two endpoints of the critical value arc of $\mathcal{A}_{j+1}$ and their common landing point contains the unique critical value of $f$. This sector is called a \emph{critical value sector}. 

\begin{lemma}[The Characteristic Arc]\label{blah} 
Among all the complementary arcs of the various $\mathcal{A}_j$'s, there is a unique one of minimum length. It is a critical value arc for some $\mathcal{A}_j $ and is strictly contained in all other critical value arcs.
\end{lemma}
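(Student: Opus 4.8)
The plan is to combine the covering structure from the Critical Arc lemma (which controls lengths under multiplication by $d$) with a planar-topology observation in the dynamical plane (which controls inclusions). Throughout I assume the portrait is non-trivial with $r := \vert\mathcal{A}_j\vert \geq 2$ for all $j$; the degenerate case $\mathcal{P} = \{\{0\}\}$ is immediate. Recall from the Critical Arc lemma that every complementary arc of any $\mathcal{A}_j$ has length either $> 1 - 1/d$ (the unique critical arc) or $< 1/d$ (a ``short'' arc), with no arc of intermediate length. For each $j$ I write $V_j$ for the critical value arc of $\mathcal{A}_j$.

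First I would show that a shortest complementary arc is necessarily a critical value arc. Since only finitely many arcs occur, the minimal length is attained by some arc $I$; as $d \geq 2$, a critical arc is strictly longer than any short arc, so $I$ is short, say a complementary arc of $\mathcal{A}_{j+1}$. Suppose $I$ were not $V_{j+1}$. Counting preimages via the Critical Arc lemma: a point of $V_{j+1}$ has all $d$ of its preimages under multiplication by $d$ inside the critical arc of $\mathcal{A}_j$ (which covers $V_{j+1}$ exactly $d$ times), whereas a point of any other complementary arc $W$ of $\mathcal{A}_{j+1}$ has only $d-1$ preimages there and hence exactly one more, lying in a short arc of $\mathcal{A}_j$ that maps bijectively onto $W$. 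Thus $I$ would be the bijective $\times d$ image of a short arc $C$ of $\mathcal{A}_j$ with $\ell(C) = \ell(I)/d < \ell(I)$, contradicting minimality. Hence $I$ is a critical value arc.

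Next I would prove that the $p$ critical value arcs are strictly nested, working in the dynamical plane. For each $j$ the critical value sector $S_j$, bounded by the two rays landing at the common point $z_j$ whose angles are the endpoints of $V_j$, contains the critical value $c$, by the remark following the Critical Arc lemma. Fix indices $a \neq b$; then $z_a \neq z_b$, and the four bounding rays are pairwise disjoint away from their landing points. Considering whether $z_b$ lies in $S_a$ or in its complementary region and using that the two rays emanating from $z_b$ cannot cross the two rays bounding $S_a$, one sees that these rays from $z_b$ stay entirely on one side of $\partial S_a$; since $c \in S_a \cap S_b$, this forces either $S_b \subset S_a$ or $S_a \subset S_b$. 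Reading this off at infinity gives $V_a \subsetneq V_b$ or $V_b \subsetneq V_a$, the inclusion being strict because distinct endpoints land at distinct orbit points. Therefore the critical value arcs are totally and strictly ordered by inclusion.

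Combining the two steps finishes the proof: the global minimum is attained by a critical value arc, and among the strictly nested critical value arcs there is a unique smallest one, strictly contained in (and hence strictly shorter than) all the others. This arc is then the unique complementary arc of minimum length. I expect the main obstacle to be the planar-topology step, namely making rigorous the passage from ``the bounding rays do not cross'' to strict nesting of the sectors. I would handle it by the standard fact that distinct dynamical rays are disjoint simple curves meeting $K(f_c)$ only at their landing points, so that $\partial S_a$ separates the plane and the connected configuration bounding $S_b$ must lie in a single complementary component of $\partial S_a$.
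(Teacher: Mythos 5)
Your proposal is correct and takes essentially the same two-step route as the paper: the minimal complementary arc must be a critical value arc (otherwise it is the bijective $\times d$ image of an arc $1/d$ times its length), and the critical value arcs are strictly nested because the corresponding sectors all contain the critical value and their bounding ray pairs cannot cross --- which is exactly the ``unlinking property'' the paper invokes. Your write-up merely spells out the preimage count and the planar separation argument in more detail.
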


\begin{proof}
Among all the complementary arcs of the various $\mathcal{A}_j$'s, there is clearly at least one, say $\left(t^-,t^+\right)$, of minimal length. This arc must be a critical value arc for some $\mathcal{A}_j$: else it would be the diffeomorphic image of some arc of $1/d$ times its length. Let $\left(a,b\right)$ be a critical value arc for some $\mathcal{A}_k$ with $k\neq j$. Both the critical value sectors in the dynamical plane of $f$ contain the unique critical value of $f$. Therefore, $\left(t^-,t^+\right) \bigcap \left(a,b\right)\neq \emptyset$. From the unlinking property of orbit portraits, it follows that $\left(t^-,t^+\right)$ and $\left(a,b\right)$ are strictly nested; i.e. the critical value sector $\left(a,b\right)$ strictly contains $\left(t^-,t^+\right)$.
\end{proof}

This shortest arc $\mathcal{I}_{\mathcal{P}}$ will be called the \emph{Characteristic Arc} of the orbit portrait and the two angles at the ends of this arc will be called the \emph{Characteristic Angles}. The characteristic angles, in some sense, are crucial to the understanding of orbit portraits.

We are now in a position to give a complete description of orbit portraits of unicritical polynomials.

\begin{theorem}\label{complete holomorphic}
$\displaystyle$ Let $f$ be a unicritical polynomial of degree $d$ and $\mathcal{O} = \{ z_1 , z_2 ,\cdots,z_p\}$ be a periodic orbit such that at least one rational dynamical ray lands at some $z_j$. Then the associated orbit portrait (which we assume to be non-trivial) $\mathcal{P(O)} = \{ \mathcal{A}_1 , \mathcal{A}_2, \cdots, \mathcal{A}_p \}$ satisfies the following properties:
\begin{enumerate}
\item Each $\mathcal{A}_j$ is a non-empty finite subset of $\mathbb{Q}/\mathbb{Z}$.

\item The map $\theta\rightarrow d\theta$ sends $\mathcal{A}_j$ bijectively onto $\mathcal{A}_{j+1}$ and preserves the cyclic order of the angles.

\item For each $j, \mathcal{A}_j$ is contained in some arc of length less than $1/d$ in $\mathbb{R}/\mathbb{Z}$.

\item Each $\displaystyle \theta \in \bigcup_{j = 1}^p\mathcal{A}_j$ is periodic under multiplication by $d$ and have a common period $rp$ for some $r\geq 1$.

\item For every $\mathcal{A}_i$, the translated sets $\mathcal{A}_{i,j}:=\mathcal{A}_i+j/d$ ($j=0, 1, 2,\cdots, d-1$) are unlinked from each other and from all other $\mathcal{A}_m$.
\end{enumerate}
\end{theorem}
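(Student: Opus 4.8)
The plan is to observe that properties (1)--(4) are immediate consequences of the lemmas already established, so that the real work is concentrated entirely in the unlinking property (5). For (1), non-emptiness and finiteness are part of the definition of an orbit portrait together with the Finitely Many Rays lemma, and rationality of the angles is built into the hypothesis. Property (2) is precisely the Orientation Preservation lemma, and property (3) is the content of the Critical Arc lemma. Property (4) follows from the Finitely Many Rays lemma: since $\mathcal{O}$ carries rational rays landing at a periodic cycle, all landing angles are periodic under multiplication by $d$ with one common period, and that period is forced to be a multiple $rp$ of the orbit period $p$. I would dispose of all four in a single short paragraph.

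The heart of the matter is (5), and the first step is to translate the combinatorial translates into dynamical data. The key observation is that
\[
\bigcup_{j=0}^{d-1}\left(\mathcal{A}_i + j/d\right)
\]
is exactly the full preimage of $\mathcal{A}_{i+1}$ under multiplication by $d$, and I claim each individual translate $\mathcal{A}_{i,j}=\mathcal{A}_i+j/d$ is the set of angles of all rays landing at one specific preimage $w_j$ of $z_{i+1}$. To set this up I first note that $\mathcal{O}$ is repelling or parabolic (it carries landing rays), so the critical point $0$ does not lie on $\mathcal{O}$; consequently no $z_{i+1}$ equals the critical value $c=f(0)$, and $z_{i+1}$ has $d$ distinct non-critical preimages $w_0=z_i,w_1,\dots,w_{d-1}$. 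Running the $d$ local inverse branches of $f$ at $z_{i+1}$ carries the rays landing at $z_{i+1}$ to rays landing at the $w_j$; because $\mathcal{A}_{i+1}$ lies in an arc of length less than $1/d$ by property (3), a single branch is used throughout, so the angles landing at $w_j$ form exactly one translate $\mathcal{A}_i+j/d$ after suitable relabelling.

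With this dictionary in hand, (5) reduces to the standard fact that the angle sets of the rays landing at two \emph{distinct} points of $J(f)$ are unlinked (the fact invoked in the proof of Lemma~\ref{blah}): two rays landing at a common point, together with that point, form a Jordan arc through $\infty$ separating the plane, and any further landing point lies in a single complementary sector. Distinct translates land at distinct preimages $w_j\neq w_{j'}$, hence are unlinked; moreover, since $z_i$ is the unique point of the cycle mapping to $z_{i+1}$, for $j\neq 0$ the point $w_j$ lies outside $\mathcal{O}$ and is thus distinct from every $z_m$, so $\mathcal{A}_{i,j}$ is unlinked from every $\mathcal{A}_m$. The remaining case $j=0$ is the unlinking of $\mathcal{A}_i=\mathcal{A}_{i,0}$ from $\mathcal{A}_m$ with $m\neq i$, which is the usual unlinking of an orbit portrait.

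I expect the main obstacle to be the middle step: rigorously showing that each combinatorial translate $\mathcal{A}_i+j/d$ is genuinely the angle set of a single dynamical landing point, i.e.\ that the local inverse branches glue consistently over all of $\mathcal{A}_{i+1}$ and that no pulled-back ray crashes into a precritical point. This is exactly where property (3) and the exclusion of the critical value are needed. Once this correspondence between translates and preimage points is secured, the unlinking conclusion is essentially automatic.
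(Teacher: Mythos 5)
Your proposal is correct in substance and reaches the same intermediate fact as the paper: each translate $\mathcal{A}_i+j/d$ is the set of angles of rays landing at one of the $d$ preimages of $z_{i+1}$, and then unlinking follows because ray families landing at distinct points of $J(f)$ are unlinked. The treatment of (1)--(4) and of the final unlinking step (including the observation that $w_j\notin\mathcal{O}$ for $j\neq 0$, and that $z_{i+1}$ cannot be the critical value since its only preimage would then be $0$) matches the paper's intent exactly. Where you differ is at the step you yourself flag as the main obstacle: identifying each combinatorial translate with the angle set at a single preimage point. You propose to do this by extending local inverse branches of $f$ along the star of rays at $z_{i+1}$, using property (3) to keep the branch consistent; this can be made rigorous (the star avoids the critical value, so its preimage is a trivial $d$-fold cover with $d$ star components), but one still has to argue that the angle set of each component is an arithmetic translate of $\mathcal{A}_i$ rather than some other section of $d^{-1}(\mathcal{A}_{i+1})$. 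The paper dissolves this difficulty in one line by invoking the $d$-fold rotation symmetry of the Julia set of $z^d+c$: the rotation $z\mapsto e^{2\pi i j/d}z$ commutes with $f_c$ up to post-composition, preserves $J(f_c)$, and carries the ray at angle $\theta$ to the ray at angle $\theta+j/d$, so the angles landing at $e^{2\pi i j/d}z_i$ are exactly $\mathcal{A}_i+j/d$ with no branch-tracking needed. Your route is more general (it would survive in settings without the symmetry), but for unicritical $z^d+c$ the symmetry argument is both shorter and removes precisely the gap you anticipated.
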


\begin{proof}
The first four properties follow from the previous lemmas. Property (5) simply states the fact that if two rays
$\mathcal{R}_{\theta}^c$ and $\mathcal{R}_{\theta'}^c$ land together at some periodic point $z$, then the rays $\mathcal{R}_{\theta+j/d}^c$ and $\mathcal{R}_{\theta'+j/d}^c$ land together at a pre-periodic point $z'$ with $f(z')=f(z)$: the Julia set of $f_c$ has a $d$-fold rotation symmetry.
\end{proof}

\begin{definition}[Formal Orbit Portraits]
A finite collection $\mathcal{P(O)} = \{ \mathcal{A}_1 , \mathcal{A}_2, \cdots, \mathcal{A}_p \}$ of subsets of $\mathbb{R}/\mathbb{Z}$ satisfying the six properties of Theorem \ref{complete holomorphic} is called a \emph{formal orbit portrait}.

The condition (3) of Theorem \ref{complete holomorphic} implies that each $\mathcal{A}_j$ has a complementary arc of length greater than $\left( 1 - 1/d\right)$ (which we call the critical arc of  $\mathcal{A}_j$) that, under multiplication by $d$ covers exactly one complementary arc of $\mathcal{A}_{j+1}$ d-times (which we call the critical value arc of $\mathcal{A}_{j+1}$) and the others $\left( d - 1 \right)$-times. We label all the critical value arcs as $\mathcal{I}_1 , \mathcal{I}_2, \cdots, \mathcal{I}_p$. 
\end{definition}

The next lemma is a combinatorial version of Lemma \ref{blah} and this is where condition (5) of the definition of formal orbit portraits comes in.

\begin{lemma}\label{unique smallest arc}
Let $\mathcal{P(O)} = \{ \mathcal{A}_1 , \mathcal{A}_2, \cdots, \mathcal{A}_p \}$ be a formal orbit portrait. Among all the complementary arcs of the various $\mathcal{A}_j$'s, there is a unique one of minimum length. It is a critical value arc for some $\mathcal{A}_j $ and is strictly contained in all other critical value arcs.
\end{lemma}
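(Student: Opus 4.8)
The plan is to mirror the architecture of Lemma~\ref{blah}, but to replace its one genuinely dynamical input---the fact that every critical value sector contains the single critical value of $f$, so that any two critical value arcs overlap---by a purely combinatorial argument based on property~(5) of the definition of a formal orbit portrait. I would carry this out in three stages: first exhibit a shortest complementary arc, then identify any shortest arc as a critical value arc, and finally prove that critical value arcs are strictly nested.

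The first stage is immediate: each $\mathcal{A}_j$ is finite and there are finitely many indices, so there are only finitely many complementary arcs in total and at least one has minimal length. The second stage uses only properties~(2) and~(3). By~(3) every $\mathcal{A}_j$ has a single critical arc of length exceeding $1-1/d$, while each of its remaining complementary arcs has length less than $1/d$; the map $\theta\mapsto d\theta$ therefore carries each such short arc homeomorphically, expanding its length by exactly the factor $d$, onto a complementary arc of $\mathcal{A}_{j+1}$, and these images exhaust every complementary arc of $\mathcal{A}_{j+1}$ except the critical value arc $\mathcal{I}_{j+1}$. Hence any non-critical-value complementary arc $I$ is the $d$-fold expansion of a complementary arc of length $\ell(I)/d<\ell(I)$, so a shortest arc cannot be of this type and must be a critical value arc. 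This is the exact combinatorial shadow of the ``diffeomorphic image of an arc of $1/d$ times the length'' step in Lemma~\ref{blah}.

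The third stage is the crux, and it is the step I expect to be the main obstacle, precisely because this is where Lemma~\ref{blah} invoked the single critical value, an object unavailable for a formal portrait. The goal is to show that any two critical value arcs $\mathcal{I}_j$ and $\mathcal{I}_k$ meet; unlinking will then upgrade this to nesting. To produce the overlap I would pull $\mathcal{I}_k$ back under $\theta\mapsto d\theta$: its full preimage consists of the $d$ gaps that separate consecutive translates $\mathcal{A}_{k-1}+l/d$ ($l=0,\dots,d-1$), all lying inside the critical arc of $\mathcal{A}_{k-1}$. Property~(5) asserts that $\mathcal{A}_{j-1}$ is unlinked from each translate $\mathcal{A}_{k-1}+l/d$; since $\mathcal{A}_{j-1}$ occupies an arc of length less than $1/d$, I would deduce that it lies in a single complementary arc of $\bigcup_l(\mathcal{A}_{k-1}+l/d)$ and then push forward by $\theta\mapsto d\theta$ to locate $\mathcal{A}_j$, and hence its critical value arc $\mathcal{I}_j$, relative to $\mathcal{I}_k$. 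The delicate point---and the real heart of the proof---is to exclude the configuration in which $\mathcal{I}_j$ would wrap around the outside of $\mathcal{I}_k$; here I would combine the minimality of the chosen arc with the unlinking of the sets $\mathcal{A}_m$ themselves (the $l=0$ case of~(5)) to force genuine nesting, $\mathcal{I}_j\subseteq\mathcal{I}_k$.

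Once nesting is established the conclusion follows quickly. The shortest arc, already known to be a critical value arc, is contained in every critical value arc; two nested arcs of equal length coincide, so by the second stage no other complementary arc attains the minimal length, giving uniqueness. Being strictly shorter than every other critical value arc, the shortest arc is strictly contained in each of them, which is the assertion of Lemma~\ref{unique smallest arc}.
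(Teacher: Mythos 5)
Your stages one and two are exactly the paper's opening moves and are fine: finiteness gives a shortest complementary arc, and since every non-critical-value complementary arc of $\mathcal{A}_{j+1}$ is the image, expanded by the factor $d$, of a complementary arc of $\mathcal{A}_j$ of $1/d$ times its length, a shortest arc must be a critical value arc. The problem is stage three, which you yourself identify as the heart of the argument and then do not carry out; worse, the one concrete deduction you do commit to there is not valid as stated. You claim that because $\mathcal{A}_{j-1}$ is unlinked from each translate $\mathcal{A}_{k-1}+l/d$ and sits in an arc of length less than $1/d$, it must lie in a single complementary arc of $\bigcup_l(\mathcal{A}_{k-1}+l/d)$. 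This does not follow: $\mathcal{A}_{j-1}$ could have points in two adjacent gaps (the preimages of $\mathcal{I}_k$) while enclosing the entire translate between them inside one of its own complementary arcs. In that configuration all of $\mathcal{A}_{j-1}$ lies in the critical arc of every translate and each translate lies in a single complementary arc of $\mathcal{A}_{j-1}$, so nothing is linked, and the containing arc still has length less than $1/d$ since consecutive gaps are only a span apart. Pulling back the \emph{other} critical value arc $\mathcal{I}_k$ is precisely what makes this unavoidable, because the preimages of a non-minimal arc may well contain angles of the portrait.

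The paper's proof runs the pullback in the opposite direction, and that reversal is what closes the gap. One pulls back the \emph{minimal} arc $\mathcal{I}=(t^-,t^+)$ itself: minimality guarantees $\mathcal{I}$ contains no angle of the portrait, hence neither do its $d$ preimages $\mathcal{I}/d+r/d$, so every $\mathcal{A}_m$ with $m\neq j-1$ is trapped in the closed spans $C_1,\ldots,C_d$ bounded by the extreme points of the translates $\mathcal{A}_{j-1}+i/d$. Only now does unlinking bite: a set with points in the interiors of two distinct $C_r$'s would meet both a non-critical complementary arc and the critical arc of some translate, hence be linked with it. So each $\mathcal{A}_m$ lies in the interior of a single $C_r$, and pushing forward, every non-critical-value arc of every $\mathcal{A}_{m+1}$ lands in $\mathbb{R}/\mathbb{Z}\setminus[t^-,t^+]$; the critical value arc of each $\mathcal{A}_{m+1}$ with $m+1\neq j$ therefore strictly contains $\mathcal{I}$, which gives both the nesting and the uniqueness at once. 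You should restructure stage three around the minimal arc's portrait-free preimages rather than around a general pairwise-nesting claim for critical value arcs, which is both harder than what the lemma needs and not reachable from unlinking alone.
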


\begin{proof}
Among all the complementary arcs of the various $\mathcal{A}_j$'s, there is clearly at least one, say $\mathcal{I} =  \left(t^-,t^+\right)$, of minimal length $l$. This arc must be a critical value arc of some $\mathcal{A}_j$:  else it would be the diffeomorphic image of some arc of $1/d$ times its length. Let $\mathcal{I^{\prime}} = \left(a,b\right)$ be the critical arc of $\mathcal{A}_{j-1}$ having length $(d-1)/d+ l/d$ so that its image under multiplication by $d$ covers $\left(t^-,t^+\right)$ $d$-times and the rest of the circle exactly $(d-1)$-times. $\left(t^-,t^+\right)$ has $d$ pre-images $\mathcal{I}/d$, $\left(\mathcal{I}/d+1/d\right)$, $\left(\mathcal{I}/d+2/d\right), \cdots, \left(\mathcal{I}/d+(d-1)/d\right)$; each of them is contained in $\left(a,b\right)$ and has length $l/d$. By our minimality assumption, $\left(t^-,t^+\right)$ contains no element of $\mathcal{P}$ and hence neither do its $d$ pre-images. Label the $d$ connected components of $\displaystyle \mathbb{R}/\mathbb{Z} \setminus \bigcup_{r=0}^{d-1} \left(\mathcal{I}/d+r/d\right)$ as $C_1, C_2,\cdots, C_d$ with $C_1 = \lbrack b,a \rbrack$.

Clearly, $\mathcal{A}_{j-1}$ is contained in $C_1$ and the two end-points of $a$ and $b$ of $C_1$ belong to $\mathcal{A}_{j-1}$. Also, $C_{i+1} = C_1 + i/d$ for $0\leq i \leq d-1$. Therefore, $\mathcal{A}_{j-1}+i/d$ is contained in $C_{i+1}$ with the end-points of $C_{i+1}$ belonging to $\mathcal{A}_{j-1}+i/d$. By condition $\left(5\right)$ of the definition of formal orbit portraits, each $\mathcal{A}_{j-1}+i/d$ (for fixed $j$ and varying $i$) is unlinked from $\mathcal{A}_{k}$, for $k\neq j-1$. This implies that any $\mathcal{A}_k$ $(k \neq j-1)$ is contained in int$ (C_r)$, for a unique $r \in \{ 1, 2, \cdots, d \}$, where $r$ depends on $k$. 

Since any $\mathcal{A}_k$ $(k \neq j-1)$ is contained in int$(C_r)$, for a unique $r \in \{ 1, 2, \cdots, d \}$, all the non-critical arcs of $\mathcal{A}_k$ are contained in the interior of the same $C_r$. Thus all the non-critical value arcs of $\mathcal{A}_{k+1}$ $(k+1 \neq j)$ are contained $\displaystyle \mathbb{R}/\mathbb{Z} \setminus \left[t^-,t^+\right]$. Hence the critical value arc of any $\mathcal{A}_{m}$ $(m \neq j)$ strictly contains $\mathcal{I} =  \left(t^-,t^+\right)$. The uniqueness follows.
\end{proof}

\begin{lemma}\label{folklore}
For a formal orbit portrait $\mathcal{P} = \{ \mathcal{A}_1 , \mathcal{A}_2, \cdots, \mathcal{A}_{p} \}$, multiplication by $d$ either permutes all the angles of $\mathcal{A}_1\cup \mathcal{A}_2\cup \cdots\cup \mathcal{A}_{p}$ or $\vert \mathcal{A}_j \vert \leq 2$ for all $j$ and the first return map of $\mathcal{A}_j$ fixes each angle.
\end{lemma}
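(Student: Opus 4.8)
The plan is to analyze the single ``first return'' map and reduce the whole statement to one about its rotation number. Throughout I read the first alternative as saying that multiplication by $d$ acts \emph{transitively} (as a single cycle) on $\mathcal{A}_1\cup\cdots\cup\mathcal{A}_p$. Fix $\mathcal{A}_1$ and set $\mu:=m_d^{\,p}|_{\mathcal{A}_1}$, where $m_d$ denotes the map $\theta\mapsto d\theta$. By Theorem~\ref{complete holomorphic}(2), $m_d$ carries each $\mathcal{A}_j$ bijectively onto $\mathcal{A}_{j+1}$ preserving cyclic order; composing $p$ times shows $\mu$ is a cyclic-order-preserving self-bijection of the finite set $\mathcal{A}_1$. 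Such a map is forced to be a \emph{rigid rotation}: writing $\mathcal{A}_1=\{\theta_0,\dots,\theta_{q-1}\}$ in cyclic order, there is a fixed $s$ with $\mu(\theta_i)=\theta_{i+s}$ (indices mod $q$). Hence every $\mu$-orbit has length $q/\gcd(q,s)$, and because $m_d$ ties the $\mathcal{A}_j$ together cyclically, $m_d$ is transitive on $\bigcup_j\mathcal{A}_j$ exactly when $\mu$ is a single $q$-cycle, i.e.\ when $\gcd(q,s)=1$. So the lemma is equivalent to the dichotomy: either $\gcd(q,s)=1$, or $s=0$ and $q\le 2$.

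Next I would record the combinatorial rotation number $\rho:=s/q\bmod 1$, observing (again from order preservation, the conjugacy of successive first return maps by $m_d$, and Theorem~\ref{complete holomorphic}(4), which gives every angle the common period $rp$) that $\rho$ is independent of the chosen index $j$, hence is an invariant of $\mathcal{P}$. The dichotomy then breaks into two claims: (i) if $\rho\neq 0$ then the denominator of $\rho$ in lowest terms equals the valence $q=\vert\mathcal{A}_1\vert$ (so $\gcd(q,s)=1$ and $\mu$ is a single cycle); and (ii) if $\rho=0$, equivalently $\mu=\mathrm{id}$ so that the first return fixes each angle, then $q\le 2$.

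Both claims I would attack with two tools: the \emph{expansion} of $m_d$ --- on any arc of length $<1/d$ it is injective and multiplies length by $d$, while the unique critical arc of each $\mathcal{A}_j$ (of length $>1-1/d$) instead wraps $d$-to-one onto, and thereby contracts to, the critical value arc of $\mathcal{A}_{j+1}$ --- and the \emph{characteristic arc} $\mathcal{I}_\mathcal{P}=(t^-,t^+)$ of Lemma~\ref{unique smallest arc}, which is the unique shortest complementary arc, is a critical value arc, and is strictly nested inside every other critical value arc. For claim (ii): if $\mu=\mathrm{id}$ then each $\mathcal{A}_j$ lies in an arc of length $<1/d$ (Theorem~\ref{complete holomorphic}(3)), yet a small gap between two consecutive angles of $\mathcal{A}_1$ is multiplied by $d$ at every step at which it is not the critical arc; to be restored after one period its orbit must pass through the critical-arc/critical-value-arc bottleneck, and the strict nesting of Lemma~\ref{unique smallest arc} (together with the fact that all the sets $m_d(\mathcal{A}_j)=\mathcal{A}_{j+1}$ must themselves sit in arcs of length $<1/d$) leaves room for at most two such gaps, forcing $q\le 2$. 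For claim (i) the key combinatorial observation is that $t^-$ and $t^+$ are \emph{cyclically adjacent} endpoints of $\mathcal{I}_\mathcal{P}$, and that they lie in a common $\mu$-orbit precisely when $\gcd(q,s)=1$; so it suffices to prove that the two rays bounding the (smallest) critical value sector always belong to a single cycle of the first return map. I would establish this by tracking the critical value sector under the first return and using that $\mathcal{I}_\mathcal{P}$ is strictly smaller than, and strictly inside, every competing critical value arc, so that the rotation carrying sectors around must sweep through all $q$ of them.

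The step I expect to be the main obstacle is claim (i): showing that a nonzero rotation number is automatically carried by a single cycle, i.e.\ that the valence equals the denominator of $\rho$. This is exactly where the unlinking condition Theorem~\ref{complete holomorphic}(5) and the strict-nesting half of Lemma~\ref{unique smallest arc} have to be used in earnest, through a careful account of how the critical value arcs of the various $\mathcal{A}_j$ are nested and how $\mu$ permutes them. By contrast, the rigid-rotation reduction of the first paragraph and the expansion estimate behind claim (ii) should be comparatively mechanical.
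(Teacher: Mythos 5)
Your first paragraph is fine and is essentially the same reduction the paper uses implicitly: the first return map $\mu=m_d^{\,p}|_{\mathcal{A}_1}$ is an order-preserving bijection of a finite cyclically ordered set, hence a rigid rotation by some $s$, and the lemma amounts to showing that $q\geq 3$ forces $\gcd(q,s)=1$. But from that point on the proposal is a plan rather than a proof, and both of its substantive claims are left without an actual argument. For claim (ii), the ``bottleneck'' count does not close: if $\mu=\mathrm{id}$, then each of the $q$ complementary arcs of $\mathcal{A}_1$ returns to itself after $p$ steps, so each must pass through the critical arc at least once during its cycle (otherwise its length would be multiplied by $d^p$); since each $\mathcal{A}_j$ has exactly one critical arc, this yields only $q\leq p$, not $q\leq 2$. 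Nothing in your sketch explains how the strict nesting of Lemma~\ref{unique smallest arc} improves this to two. For claim (i), the ``key combinatorial observation'' you propose to prove --- that the two rays bounding the smallest critical value sector always belong to a single cycle of the first return map --- is false as an unconditional statement (it fails for every primitive portrait with $q=2$), so it can only be established \emph{using} the hypothesis $\rho\neq 0$, and the proposal gives no mechanism by which that hypothesis enters. You explicitly flag (i) as the step you cannot yet do; that step is the entire content of the lemma.

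The missing idea is the one the paper's proof is built on. Assuming $q\geq 3$, let $\mathcal{I}^{+}$ and $\mathcal{I}^{-}$ be the two complementary arcs of $\mathcal{A}_1$ adjacent to the characteristic arc $\mathcal{I}_{\mathcal{P}}$, with $\ell(\mathcal{I}^{-})\geq\ell(\mathcal{I}^{+})$. Pull $\mathcal{I}^{+}$ back under the dynamics until you first hit a critical value arc $\mathcal{I}_c$ with $d^{m}\mathcal{I}_c=\mathcal{I}^{+}$ diffeomorphically; then $\ell(\mathcal{I}_c)<\ell(\mathcal{I}^{+})$, and the strict nesting of Lemma~\ref{unique smallest arc} together with the unlinking condition forces $\mathcal{I}_c=\mathcal{I}_{\mathcal{P}}$ (any other critical value arc would have to contain $\mathcal{I}^{-}$, contradicting $\ell(\mathcal{I}^{-})\geq\ell(\mathcal{I}^{+})>\ell(\mathcal{I}_c)$; this is exactly where $q\geq 3$ is used, so that $\mathcal{I}^{+}\neq\mathcal{I}^{-}$). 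Hence the first return map sends the common endpoint of $\mathcal{I}_{\mathcal{P}}$ and $\mathcal{I}^{+}$ to the \emph{adjacent} point of $\mathcal{A}_1$ on the far side of $\mathcal{I}^{+}$; an order-preserving bijection moving one point to an adjacent point is a one-step rotation, hence transitive. This single argument settles both of your cases at once ($s\neq 0$ and $\gcd(q,s)=1$), and it is precisely the ``careful account of how the critical value arcs are nested'' that your proposal defers. Without it, the proof is not complete.
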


\begin{proof}
We assume that the cardinality of each $\mathcal{A}_j$ is at least three and we'll show that multiplication by $d$ permutes all the rays of $\mathcal{P}$. We can also assume that the characteristic arc $\mathcal{I}_{\mathcal{P}}$ is a critical value arc of $\mathcal{A}_1$. Since $\vert \mathcal{A}_1 \vert \geq 3,$ $\mathcal{A}_1$ has at least three complementary components. Let $\mathcal{I}^{+}$ be the arc just to the right of $\mathcal{I}_{\mathcal{P}}$ and $\mathcal{I}^{-}$ be the one just to the left of $\mathcal{I}_{\mathcal{P}}$. Let $\mathcal{I}^{-}$ be longer than $\mathcal{I}^{+}$; i.e. $\textit{l}\left(\mathcal{I}^{-}\right) \geq \textit{l}\left(\mathcal{I}^{+}\right).$ Since $\mathcal{I}^{+}$ is not the critical value arc of $\mathcal{A}_1$, there must exist a critical value arc $\mathcal{I}_c$ which maps diffeomorphically onto $\mathcal{I}^{+}$ under multiplication by $d$; i.e. $\mathcal{I}^{+} = \left( d \right)^m \mathcal{I}_c$, for some $m \geq 1$.

We claim that $\mathcal{I}_c = \mathcal{I}_{\mathcal{P}}$. Otherwise, $\mathcal{I}_c$ will strictly contain the characteristic arc $\mathcal{I}_{\mathcal{P}}$. Since $\mathcal{I}_c$ is strictly smaller than $\mathcal{I}^{+}$, $\mathcal{I}_c$ cannot contain $\mathcal{I}^{+}$. So one end of $\mathcal{I}_c$ must lie in $\mathcal{I}^{+}$; but then it follows from the unlinking property that both ends of $\mathcal{I}_c$ are in $\mathcal{I}^{+}$. Therefore, $\mathcal{I}_c$ contains $\mathcal{I}^{-}$. But this is impossible because $\textit{l}\left(\mathcal{I}^{-}\right) \geq \textit{l}\left(\mathcal{I}^{+}\right) \gneq \textit{l}\left(\mathcal{I}_{c}\right). $

\begin{figure}[!ht]\label{mixed1}
\centering
\includegraphics[scale=0.36]{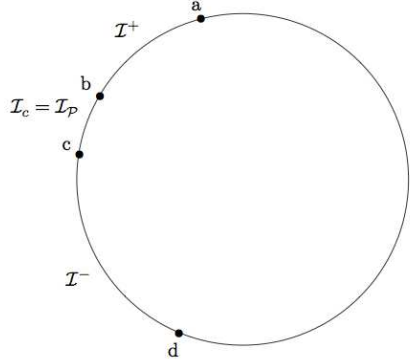}
\caption{The characteristic arc $\mathcal{I}_{\mathcal{P}}$ maps to the shorter adjacent arc $\mathcal{I}^{+}$.}
\end{figure}

Therefore, $\mathcal{I}^{+} = \left( d \right)^m \mathcal{I}_{\mathcal{P}}$. Also let, $\mathcal{I}^{+} = \left( a , b \right)$ and $\mathcal{I}_{\mathcal{P}} = \left( b , c \right)$. Since $\mathcal{I}_{\mathcal{P}}$ maps to $\mathcal{I}^{+}$ by an orientation preserving diffeomorphism, we have: $b = d^{m} c$ and $a = d^{m} b$. Multiplication by $d^m$ is an orientation preserving map and it sends $\mathcal{A}_1$ bijectively onto itself such that the point $b$ is mapped to an adjacent point $a$. It follows that multiplication by $d^m$ acts transitively on $\mathcal{A}_1$. Hence multiplication by $d$ permutes all the rays of $\mathcal{P}$.
\end{proof}

The above dichotomy leads to the following definition:

\begin{definition}[Primitive and Satellite]
If $p$ is the common period of all the angles in $\mathcal{A}_1\cup\ldots\cup A_{p}$, i.e., if each angle is fixed by the first return map, then the portrait is called \emph{primitive}. Otherwise, the orbit portrait is called
\emph{non-primitive} or \emph{satellite}. In the latter case, all angles are permuted transitively under multiplication by $d$.
\end{definition}

We record a few easy corollaries that follow from the proof of the previous lemma.

\begin{corollary}
If $\mathcal{P}$ is a non-trivial formal orbit portrait, the two characteristic angles are on the same
cycle if and only if all angles are on the same cycle. 
\end{corollary}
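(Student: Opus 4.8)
\medskip

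The plan is to recognize this corollary as a restatement of the primitive/satellite dichotomy of Lemma~\ref{folklore}, resting on one structural observation: the two characteristic angles always lie in the \emph{same} set $\mathcal{A}_j$. Indeed, by Lemma~\ref{unique smallest arc} the characteristic arc $\mathcal{I}_{\mathcal{P}}$ is a critical value arc of some $\mathcal{A}_j$, that is, a connected component of $(\mathbb{R}/\mathbb{Z})\setminus\mathcal{A}_j$; its two boundary points are by definition the characteristic angles $t^-$ and $t^+$, and both belong to $\mathcal{A}_j$. (Should these two angles coincide, the portrait is the exceptional $\{\{0\}\}$, for which both sides of the equivalence hold trivially; I therefore assume $t^-\neq t^+$, so that the portrait is non-trivial with $|\mathcal{A}_j|\geq 2$.)

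Recall that, by the Definition of primitive and satellite portraits, the statement ``all angles of $\mathcal{A}_1\cup\cdots\cup\mathcal{A}_p$ lie on a single cycle of multiplication by $d$'' is precisely the satellite condition (the angles being permuted transitively). The forward implication is then immediate: if all angles lie on one cycle, then so do the two characteristic angles, which are among them.

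For the converse I would argue by contraposition. Suppose the angles of $\mathcal{P}$ do not all lie on one cycle; by Lemma~\ref{folklore} we are in the primitive case, so every angle is fixed by the first return map $\theta\mapsto d^p\theta$ and $|\mathcal{A}_j|\leq 2$, whence $\mathcal{A}_j=\{t^-,t^+\}$. Combining $d^p\theta=\theta$ with property~(4) of Theorem~\ref{complete holomorphic}, which asserts that the exact common period of the angles equals $rp$ for some $r\geq 1$, forces $rp\mid p$, hence $r=1$ and every angle has period exactly $p$. Since multiplication by $d$ carries $\mathcal{A}_i$ onto $\mathcal{A}_{i+1}$ and the sets $\mathcal{A}_1,\dots,\mathcal{A}_p$ are pairwise disjoint, the $p$-element cycle of $t^-$ visits each $\mathcal{A}_i$ exactly once, meeting $\mathcal{A}_j$ only in $t^-$. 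As $t^+\in\mathcal{A}_j$ is distinct from $t^-$, it cannot lie on the cycle of $t^-$; thus the two characteristic angles lie on different cycles, which is the contrapositive of the claim.

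I do not expect a serious obstacle, but the step deserving care is the verification that in the primitive case the period is \emph{exactly} $p$ and that, consequently, a single $d$-cycle visits each $\mathcal{A}_i$ at most once. This is exactly where property~(4) and the pairwise disjointness of the $\mathcal{A}_i$ enter; once these are secured, both directions follow formally, with the degenerate portrait $\{\{0\}\}$ dispatched by the remark above.

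\medskip
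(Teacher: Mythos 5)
Your proof is correct and follows exactly the route the paper intends: it deduces the statement from the primitive/satellite dichotomy of Lemma~\ref{folklore}, which is what the paper means when it says the corollary ``follows from the proof of the previous lemma.'' Your write-up supplies the details the paper leaves implicit (that $t^-,t^+$ lie in a common $\mathcal{A}_j$, and that in the primitive case a single cycle of exact period $p$ meets each of the pairwise disjoint $\mathcal{A}_i$ exactly once, so the two elements of $\mathcal{A}_j$ must lie on distinct cycles), and handles the degenerate portrait $\{\{0\}\}$ appropriately.
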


\begin{corollary}
All angles of a formal orbit portrait are on the orbit of at least one of the characteristic angles.
\end{corollary}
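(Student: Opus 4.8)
The plan is to deduce this from the primitive/satellite dichotomy of Lemma~\ref{folklore} together with the preceding corollary. Write the two characteristic angles as $\theta^-$ and $\theta^+$, the endpoints of the characteristic arc $\mathcal{I}_{\mathcal{P}}=(\theta^-,\theta^+)$, and let $O$ denote the union of the forward orbits of $\theta^-$ and $\theta^+$ under multiplication by $d$. I want to show that $O=\mathcal{A}_1\cup\cdots\cup\mathcal{A}_p$. Since $\mathcal{I}_{\mathcal{P}}$ is the critical value arc of some $\mathcal{A}_{j_0}$ by Lemma~\ref{unique smallest arc}, and the endpoints of any complementary arc of $\mathcal{A}_{j_0}$ lie in $\mathcal{A}_{j_0}$, we have $\theta^-,\theta^+\in\mathcal{A}_{j_0}$; thus the characteristic angles sit inside a single member of the portrait.

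In the satellite case, multiplication by $d$ acts transitively on all the angles, so the forward orbit of $\theta^-$ already exhausts $\bigcup_j\mathcal{A}_j$, and there is nothing more to prove. By the preceding corollary, this is precisely the case in which both characteristic angles lie on a common cycle.

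It remains to treat the primitive case, where by the preceding corollary $\theta^-$ and $\theta^+$ lie on distinct cycles. Here Lemma~\ref{folklore} gives $\vert\mathcal{A}_j\vert\le 2$ for every $j$, with the first return map fixing each angle. Since the portrait is non-trivial, some $\mathcal{A}_j$ contains at least two angles, and because multiplication by $d$ maps each $\mathcal{A}_j$ bijectively onto $\mathcal{A}_{j+1}$ (property (2) of a formal orbit portrait), every $\mathcal{A}_j$ then has exactly two elements; in particular $\mathcal{A}_{j_0}=\{\theta^-,\theta^+\}$. The forward orbit of $\theta^-$ has period $p$ (the first return map fixes it) and meets each $\mathcal{A}_j$ in exactly one point; the same holds for $\theta^+$, and since distinct cycles are disjoint these two orbits do not overlap. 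Thus $O$ is a disjoint union of two $p$-cycles and has $2p$ elements, which equals $\sum_{j}\vert\mathcal{A}_j\vert$. Hence $O=\bigcup_j\mathcal{A}_j$, as desired.

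The only remaining point is the degenerate portrait $\mathcal{P}=\{\{0\}\}$, where the single angle $0$ is fixed by multiplication by $d$ and is itself a characteristic angle, so the statement holds trivially. The step I expect to require the most care is the bookkeeping in the primitive case: verifying that the two characteristic angles genuinely lie in the same $\mathcal{A}_{j_0}$ and lie on distinct cycles, so that the count $2p=\sum_j\vert\mathcal{A}_j\vert$ is exact rather than an overcount.
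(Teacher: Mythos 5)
Your proof is correct and follows essentially the same route the paper intends: the corollary is stated as a consequence of the proof of Lemma~\ref{folklore}, i.e.\ the primitive/satellite dichotomy, with transitivity handling the satellite case and the count $2p=\sum_j\lvert\mathcal{A}_j\rvert$ (using that the $\mathcal{A}_j$ are pairwise disjoint by the unlinking condition) handling the primitive case. Your write-up just makes explicit the bookkeeping that the paper leaves to the reader.
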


\begin{corollary}[Maximality]\label{maximal}
If $\mathcal{P} = \{ \mathcal{A}_1 , \mathcal{A}_2, \cdots, \mathcal{A}_{p} \}$ is a non-trivial (formal) orbit portrait, then it is maximal in the sense that there doesn't exist any other (formal) orbit portrait $\mathcal{P}^{\prime} = \{ \mathcal{A}_1^{\prime} , \mathcal{A}_2^{\prime}, \cdots, \mathcal{A}_{q}^{\prime} \}$ with $\mathcal{A}_1 \subsetneq \mathcal{A}_1^{\prime}$.
\end{corollary}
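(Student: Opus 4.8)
The plan is to argue by contradiction. Suppose there is a formal orbit portrait $\mathcal{P}' = \{\mathcal{A}_1', \ldots, \mathcal{A}_q'\}$ with $\mathcal{A}_1 \subsetneq \mathcal{A}_1'$, and fix $\theta' \in \mathcal{A}_1' \setminus \mathcal{A}_1$. First I would record that a formal orbit portrait is completely determined by its first set: property (2) forces $\mathcal{A}_j = d^{\,j-1}\mathcal{A}_1$ and $p$ is just the return time of $\mathcal{A}_1$, so $\mathcal{P}$ and $\mathcal{P}'$ are the portraits generated by $\mathcal{A}_1$ and $\mathcal{A}_1'$. Relabel so that $\mathcal{I}_{\mathcal{P}} = (t^-, t^+)$ is the critical value arc of $\mathcal{A}_1$; then the characteristic angles satisfy $t^\pm \in \mathcal{A}_1 \subseteq \mathcal{A}_1'$. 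Applying multiplication by $d$ to the inclusion $\mathcal{A}_1 \subseteq \mathcal{A}_1'$ and using that the sets $\mathcal{A}'_k$ are pairwise disjoint gives $\mathcal{A}_j \subseteq \mathcal{A}'_{1 + ((j-1)\bmod q)}$ for all $j$, and in particular $q \mid p$. Hence $\bigcup_j \mathcal{A}_j \subseteq \bigcup_k \mathcal{A}'_k$, and since the arc $\mathcal{I}_{\mathcal{P}}$ is either preserved or subdivided in $\mathcal{A}_1'$, we get $\ell(\mathcal{I}_{\mathcal{P}'}) \leq \ell(\mathcal{I}_{\mathcal{P}})$.

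The heart of the argument is the rigidity statement $\mathcal{I}_{\mathcal{P}'} = \mathcal{I}_{\mathcal{P}}$, i.e.\ that the enlargement creates no gap shorter than $\mathcal{I}_{\mathcal{P}}$ and leaves $(t^-, t^+)$ a gap of $\mathcal{A}_1'$. Here I would use that $\mathcal{I}_{\mathcal{P}'}$ is itself a critical value arc (Lemma~\ref{unique smallest arc} applied to $\mathcal{P}'$), trace its $d$ preimages into the critical arc of $\mathcal{A}'_0$, and push the assumption of a strictly shorter gap backwards through the cycle-unique preimage inside $\bigcup_k \mathcal{A}'_k$; combined with the unlinking property (5) and the minimality of $\mathcal{I}_{\mathcal{P}}$ this should yield an angle forbidden by that minimality. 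Granting $\mathcal{I}_{\mathcal{P}'} = \mathcal{I}_{\mathcal{P}}$, the two portraits share the characteristic angles $t^\pm$, so by the corollary that all angles of a portrait lie on the orbit of its characteristic angles we obtain $\bigcup_j \mathcal{A}_j = \mathrm{orb}(t^-) \cup \mathrm{orb}(t^+) = \bigcup_k \mathcal{A}'_k$. In particular $\theta'$ is actually an angle of $\mathcal{P}$ lying in some $\mathcal{A}_m$ with $m \equiv 1 \pmod q$ and $m \neq 1$; thus $q < p$ and $\mathcal{A}_1' = \mathcal{A}_1 \cup \mathcal{A}_{1+q} \cup \cdots \cup \mathcal{A}_{1 + (p/q - 1)q}$.

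It then remains to exclude this merging. The whole of $\mathcal{A}_1'$ must lie in an arc of length less than $1/d$ by property (3), while by Lemma~\ref{unique smallest arc} the critical value arc of each block $\mathcal{A}_{1+kq}$ with $k \neq 0$ strictly contains $\mathcal{I}_{\mathcal{P}} = (t^-, t^+)$; its two endpoints therefore lie in $\mathcal{A}_{1+kq} \subseteq \mathcal{A}_1'$ and bracket $[t^-, t^+]$ from opposite sides. Tracking these forced bracketing points for the various blocks, together with the order-preserving action of the first-return map $z \mapsto d^{\,q}z$ on $\mathcal{A}_1'$, contradicts either the length bound (property (3)) or order preservation (property (2)); this is exactly what one sees concretely when merging the period-two portrait $\{\{1/5, 4/5\}, \{2/5, 3/5\}\}$ into a putative period-one portrait $\{\{1/5, 2/5, 3/5, 4/5\}\}$, whose span $3/5$ already violates property (3) for $d = 2$.

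The main obstacle is the middle step, the rigidity $\mathcal{I}_{\mathcal{P}'} = \mathcal{I}_{\mathcal{P}}$: the opening reductions and the closing merging estimate are essentially bookkeeping with the order-preserving dynamics, but pinning the characteristic arc in place is what genuinely uses the minimality and unlinking properties, and I expect it to require the most care.
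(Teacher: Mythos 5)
Your proposal takes a genuinely different route from the paper, but as written it is not a proof: the two load-bearing steps are exactly the ones you leave as sketches. The first is the rigidity claim $\mathcal{I}_{\mathcal{P}'}=\mathcal{I}_{\mathcal{P}}$, which you yourself flag as the main obstacle and for which you only offer a strategy (``push the assumption of a strictly shorter gap backwards through the cycle\ldots this should yield an angle forbidden by that minimality''). Note that the tempting shortcut here --- Corollary \ref{Lem:Characteristic angles}, which says that a formal portrait is determined by having $t^-,t^+$ in one of its sets --- is off-limits: it is deduced from Theorem \ref{realization}, whose proof invokes Corollary \ref{maximal} itself, so using it would be circular. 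The second gap is the closing step: knowing that $\mathcal{A}_1'=\mathcal{A}_1\cup\mathcal{A}_{1+q}\cup\cdots$ and that the critical value arcs of the blocks $\mathcal{A}_{1+kq}$ ($k\neq 0$) strictly contain $[t^-,t^+]$ does not by itself contradict property (3): each block still sits inside the complement of its own critical value arc intersected with the short arc containing $\mathcal{A}_1'$, and no length bound is violated in general (your $d=2$ example works only because there the merged set happens to have large span). ``Contradicts either the length bound or order preservation'' without saying which, and how, is precisely the part that needs to be done.

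For comparison, the paper's proof avoids the characteristic arc entirely and runs off the dichotomy of Lemma \ref{folklore}. If $\mathcal{P}$ is primitive, then $|\mathcal{A}_1|=2$ and its two angles lie on \emph{different} cycles under multiplication by $d$, whereas $\mathcal{P}'$ has $|\mathcal{A}_1'|\geq 3$, hence is of satellite type with all of its angles --- including those of $\mathcal{A}_1$ --- on a single cycle: contradiction. If $\mathcal{P}$ is satellite, comparing the common ray periods gives $pv=qv'$ with $v'>v$, hence $q<p$; then $\mathcal{A}_1$ and $\mathcal{A}_{1+q}$ are disjoint subsets of $\mathcal{A}_1'$, each mapped onto itself by $d^p$, which also preserves the cyclic order of $\mathcal{A}_1'$; this forces $d^p$ to act as the identity on $\mathcal{A}_1$, contradicting transitivity. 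This is where the real content lives (the interplay between the order-preserving return map and the two nested invariant subsets), and it replaces both of your missing steps at once. I would recommend abandoning the rigidity-of-the-characteristic-arc route and arguing via the primitive/satellite dichotomy; alternatively, if you want to salvage your plan, you must actually carry out the backward-tracing argument for $\mathcal{I}_{\mathcal{P}'}=\mathcal{I}_{\mathcal{P}}$ and then give a precise combinatorial contradiction for the merged set $\mathcal{A}_1'$, neither of which is routine bookkeeping.
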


\begin{proof}
Suppose there exists non-trivial orbit portraits $\mathcal{P}$ and $\mathcal{P}^{\prime}$ satisfying the above properties. We will consider two cases and obtain contradictions in each of them.
 
Let us first assume that $\mathcal{P}$ is a primitive orbit portrait. Then $\vert \mathcal{A}_1 \vert = 2$ and the two elements of $\mathcal{A}_1$ belong to different cycles under multiplication by $d$. By Lemma \ref{folklore}, $\mathcal{P}^{\prime}$ must necessarily be an orbit portrait of satellite type and all angles of $\displaystyle\bigcup_{i=1}^q \mathcal{A}_i^{\prime}$, in particular, the two elements of $\mathcal{A}_1$ must lie in the same cycle under multiplication by $d$: which is a contradiction.

Now suppose $\mathcal{P}$ is an orbit portrait of satellite type; i.e. multiplication by $d$ permutes all the angles of $\mathcal{P}$ transitively. Let, $\vert \mathcal{A}_1 \vert = v$ (so that the common period of all the angles of $\mathcal{P}$ is $pv$) and $\vert \mathcal{A}_1^{\prime} \vert = v^{\prime}$ (so that the common period of all the angles of $\mathcal{P}^{\prime}$ is $qv^{\prime}$). Since the two portraits have angles in common, it follows that $pv = qv^{\prime}$. The hypothesis $\mathcal{A}_1 \subsetneq \mathcal{A}_1^{\prime}$ implies that $v^{\prime} > v$. Therefore, $q < p$. Clearly, both the sets $\mathcal{A}_1$ and $\mathcal{A}_{1+q} $ are contained in $\mathcal{A}_1^{\prime}$; hence multiplication by $d^p$ would map these two sets onto themselves preserving their cyclic order (multiplication by $d^p$ maps $\mathcal{A}_1^{\prime}$ onto itself). This forces multiplication by $d^p$ to be the identity map on $\mathcal{A}_1$: a contradiction to the transitivity assumption.
\end{proof}

The next lemma gives a necessary and sufficient condition for the characteristic rays of a formal holomorphic orbit portrait to co-land and is a mild generalization of the corresponding result proved in \cite{M2a}.

\begin{lemma}[Outside The Multibrot Sets] Let $\mathcal{P}$ be a formal holomorphic orbit portrait for $d \geq 2$ and $\left(t^-,t^+\right)$ be its characteristic arc. For some  $c$ not in $\mathcal{M}_d,$ the two dynamical rays $\mathcal{R}_{t^-}^c$ and $\mathcal{R}_{t^+}^c$ (where, $f_c = z^d + c$) land at the same point of $J(f_c)$ if the external angle $t(c) \in \left(t^-,t^+\right)$. 
\end{lemma}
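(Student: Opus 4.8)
The plan is to pass to the dynamical plane of $f_c$ and exploit the symbolic coding of the Cantor Julia set. Recall that for $c\notin\mathcal{M}_d$ the critical point $0$ escapes, $J(f_c)$ is a Cantor set, and the B\"ottcher coordinate identifies the parameter external angle $t(c)$ with the dynamical external angle $\theta:=t(c)$ of the critical value $c=f_c(0)$ (via $\Phi(c)=\phi_c(c)$); in particular the critical value lies on the dynamical ray $\mathcal{R}_\theta^c$. The $d$ preimages $q_0,\dots,q_{d-1}$ of $\theta$ under multiplication by $d$ are exactly the angles of the rays crashing into the critical point $0$; they cut the circle into $d$ arcs of length $1/d$, and correspondingly cut the plane into $d$ sectors. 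I would prove that $\mathcal{R}_{t^-}^c$ and $\mathcal{R}_{t^+}^c$ are never separated by these cuts, which will force them to land together.

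First I would check that the two characteristic rays actually reach $J(f_c)$. Since $\theta$ lies in the open complementary arc $(t^-,t^+)$, it is not a portrait angle, i.e.\ $\theta\notin\bigcup_j\mathcal{A}_j$; as $\bigcup_j\mathcal{A}_j$ is forward invariant under multiplication by $d$, the forward orbit of $t^\pm$ never hits $\theta$. Hence no forward image of $\mathcal{R}_{t^\pm}^c$ equals the critical value ray, so these rays avoid all precritical points, extend all the way to $J(f_c)$, and, being at periodic angles, land.

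The heart of the argument is the claim that for every $k\ge 0$ the two angles $d^kt^-$ and $d^kt^+$ lie in the same arc of the partition $\{(q_j,q_{j+1})\}$. By Lemma~\ref{unique smallest arc} the characteristic arc $(t^-,t^+)$ is contained in every critical value arc; since $\theta\in(t^-,t^+)$, we obtain $\theta\in$ (critical value arc of $\mathcal{A}_{k+2}$) for every $k$. The critical arc of $\mathcal{A}_{k+1}$ covers that critical value arc exactly $d$ times, thereby accounting for all $d$ preimages of $\theta$; hence all the $q_j$ lie in the critical arc of $\mathcal{A}_{k+1}$. Consequently the complementary ``window'' of $\mathcal{A}_{k+1}$ (the complement of the critical arc, an arc of length $<1/d$ that contains $\mathcal{A}_{k+1}$) contains no $q_j$, so it lies in a single partition arc. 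Since $d^kt^-,d^kt^+\in\mathcal{A}_{k+1}$ belong to this window, they are never separated by a $q_j$, proving the claim.

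Finally I would invoke the coding of the Cantor Julia set: the two rays $\mathcal{R}_{t^-}^c$ and $\mathcal{R}_{t^+}^c$ have identical itineraries with respect to the partition by the crashing rays $\mathcal{R}_{q_j}^c$, so for every $k$ their truncations lie in a common sector $f_c^{-k}(\Sigma_{i_k})$; these nested sectors shrink to a single point of $J(f_c)$ (their diameters tend to $0$ by expansion of $f_c$ off the critical orbit), which is then the common landing point. The step I expect to be most delicate is this last one — making precise for a \emph{disconnected} Julia set that equal itineraries force a common landing point, i.e.\ that the nested sectors genuinely close down to one point; the combinatorial input that all $q_j$ sit in the critical arc (so the characteristic rays are never separated) is exactly where the minimality of the characteristic arc, Lemma~\ref{unique smallest arc}, enters essentially.
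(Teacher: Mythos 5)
Your proposal is correct and follows essentially the same route as the paper: both prove landing via the itinerary coding of the Cantor Julia set with respect to the partition by the $d$ rays crashing into the critical point, and both derive the key fact that $d^kt^-$ and $d^kt^+$ are never separated by the partition angles $(t(c)+j)/d$ from Lemma~\ref{unique smallest arc}. The only cosmetic difference is that you locate these partition angles inside the critical arcs directly (using that $t(c)$ lies in every critical value arc), whereas the paper phrases the same fact via the complementary components $C_r$ of the preimages of the characteristic arc.
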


\begin{proof}
We retain the terminology of Lemma \ref{unique smallest arc}. If $c \notin \mathcal{M}_d$, all the periodic points of $f_c = z^d + c$ are repelling and the Julia set is a cantor set.

Let the external angle of $c$ in the parameter plane be $t(c)$. Label the connected components of $\displaystyle \mathbb{R}/\mathbb{Z}\setminus \{ t(c)/d , t(c)/d + 1/d, \cdots, t(c)/d + (d-1)/d\}$ counter-clockwise as $L_0, L_1, \cdots, L_{d-1}$ such that the component containing the angle $`0$' gets label $L_0$ . The $t(c)$-itinerary of an angle $\theta \in \mathbb{R}/\mathbb{Z}$ is defined as a sequence $\left(a_n\right)_{n \geq 0}$ in $\{ 0 , 1, \cdots, d-1 \}^{\mathbb{N}}$ such that $a_n = i$ if $d^n\theta \in L_i$. All but a countably many $\theta$'s (the ones which are not the iterated pre-images of $t(c)$ under multiplication by $d$) have a well-defined $t(c)$-itinerary. 


Similarly, in the dynamical plane of $f_c$, the $d$ external rays $\displaystyle \mathcal{R}_{t(c)/d}^{f_c}$, $\mathcal{R}_{t(c)/d + 1/d}^{f_c},$  $\cdots, \mathcal{R}_{t(c)/d + (d-1)/d}^{f_c}$ land at the critical point $0$ and cut the dynamical plane into $d$ sectors. Label these sectors counter-clockwise as $L^{\prime}_0, L^{\prime}_1, \cdots, L^{\prime}_{d-1}$ such that the component containing the external ray $\mathcal{R}_{0}^{f_c}$ at angle $`0$' gets label $L^{\prime}_0$. Any point $z \in J(f_c)$ has an associated symbol sequence $\left(a_n\right)_{n \geq 0}$ in $\{ 0 , 1, \cdots, d-1 \}^{\mathbb{N}}$ such that $a_n = i$ if $f_c^{\circ n}(z) \in L^{\prime}_i$. Clearly, a dynamical ray $\mathcal{R}_{\theta}^{f_c}$ at angle $\theta$ lands at $z$ if and only if the $t(c)$-itinerary of $\theta$ coincides with the symbol sequence of $z$ defined above.

If $t(c) \in \mathcal{I} = \left(t^-,t^+\right)$, the $d$ angles $\{ t(c)/d , t(c)/d + 1/d, \cdots, t(c)/d + (d-1)/d\}$ lie in the $d$ intervals $\mathcal{I}/d$, $\left(\mathcal{I}/d+1/d\right), \cdots, \left(\mathcal{I}/d+(d-1)/d\right)$ respectively and no element of $\mathcal{P}$ belongs to $\displaystyle \bigcup_{j=0}^{d-1} \left(\mathcal{I}/d+j/d\right)$. First note that the rays $\mathcal{R}_{t^-}^c$ and $\mathcal{R}_{t^+}^c$ indeed land at $J(f_c)$ as $t(c) \notin$ the finite sets $\{  t^{\pm}, dt^{\pm}, d^2t^{\pm},\cdots \}$. Each $\mathcal{A}_j$ is contained a unique $C_r.$ Therefore, for each $n\geq 0$, the angles $d^n t^-$ and $d^n t^+$ belong to the same $L_i$. So $t^-$ and $t^+$ have the same $t(c)$-itinerary; which implies that the two characteristic rays $\mathcal{R}_{t^-}^c$ and $\mathcal{R}_{t^+}^c$ land at the same point of $J(f_c).$
\end{proof}

It is now easy to prove the realization theorem for formal orbit portraits.

\begin{theorem}[Realizing Orbit Portraits]\label{realization}
Let $\mathcal{P(O)} = \{ \mathcal{A}_1 , \mathcal{A}_2, \cdots, \mathcal{A}_p \}$ be a formal orbit portrait for some $d \geq 2$. Then there exists some $c \in \mathbb{C}\setminus\mathcal{M}_d$, such that $f(z)= z^d + c$ has a repelling periodic orbit with associated orbit portrait $\mathcal{P(O)}$.
\end{theorem}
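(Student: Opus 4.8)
The plan is to realize the formal orbit portrait $\mathcal{P(O)}$ as an honest orbit portrait by choosing a parameter $c$ outside $\mathcal{M}_d$ for which the two characteristic rays co-land, and then reconstructing the full portrait from this single landing relation together with the symmetry and dynamical structure already established. The starting observation is that the preceding lemma (Outside The Multibrot Sets) already hands us a parameter: for any $c\notin\mathcal{M}_d$ whose external angle $t(c)$ lies in the characteristic arc $\mathcal{I}_{\mathcal{P}}=(t^-,t^+)$, the dynamical rays $\mathcal{R}_{t^-}^c$ and $\mathcal{R}_{t^+}^c$ land at a common point of $J(f_c)$. Such a $c$ exists because the parameter rays foliate $\mathbb{C}\setminus\mathcal{M}_d$, so one may pick any $c$ on the parameter ray $\mathcal{R}_{t(c)}$ with $t(c)\in(t^-,t^+)$.

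First I would fix such a $c$ and let $z$ denote the common landing point of $\mathcal{R}_{t^-}^c$ and $\mathcal{R}_{t^+}^c$. Since $c\notin\mathcal{M}_d$, every periodic point of $f_c$ is repelling and the characteristic angles $t^\pm$ are periodic of period $rp$ (by property (4) of Theorem \ref{complete holomorphic}); hence $z$ is a repelling periodic point, and its period divides $rp$. Next I would let $\mathcal{O}$ be the forward orbit of $z$ under $f_c$ and compute its genuine orbit portrait $\mathcal{P}'=\{\mathcal{A}_1',\ldots\}$ in the dynamics of $f_c$. By the orientation-preservation lemma, the sets $\mathcal{A}_j'$ are obtained from $\{t^-,t^+\}$ by repeatedly multiplying by $d$, which reproduces exactly the angles forced by conditions (2) and (4) of the formal portrait; in particular $\{t^-,t^+\}\subseteq\mathcal{A}_k'$ for the appropriate index $k$, so the realized portrait contains the characteristic angles of $\mathcal{P}$.

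The remaining task is to upgrade the containment $\mathcal{P}\subseteq\mathcal{P}'$ (in the sense that every angle of $\mathcal{P}$ actually lands on $\mathcal{O}$ with the prescribed combinatorics) to an equality, so that $\mathcal{P}'=\mathcal{P(O)}$. Here I would invoke two facts already in hand: by the corollaries following Lemma \ref{folklore}, all angles of the formal portrait $\mathcal{P}$ lie on the orbit of the characteristic angles, and by the Maximality corollary (Corollary \ref{maximal}) a non-trivial portrait admits no strict enlargement of its $\mathcal{A}_1$. Thus once I verify that the co-landing of the characteristic rays propagates—under iteration of $f_c$ and under the $d$-fold rotational symmetry of $J(f_c)$ expressed in property (5)—to co-landings of all the remaining angle pairs in $\mathcal{P}$, the realized portrait $\mathcal{P}'$ must contain $\mathcal{P}$; and maximality of $\mathcal{P}'$ as a genuine orbit portrait, combined with the fact that $\mathcal{P}$ is already maximal, forces $\mathcal{P}'=\mathcal{P}$.

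I expect the main obstacle to be this last propagation step: showing that no \emph{extra} rays land on $\mathcal{O}$ (so that $\mathcal{P}'$ is not strictly larger than $\mathcal{P}$) and that the rotational-symmetry relation (5) is genuinely realized in the Cantor Julia set rather than merely holding formally. The cleanest route is to run the $t(c)$-itinerary argument from the previous lemma in reverse: an angle $\theta$ lands at a given point of $\mathcal{O}$ if and only if its $t(c)$-itinerary matches that point's symbol sequence, so the set of angles landing at each $z_j$ is determined purely combinatorially by the partition $\{L_i\}$, and this set is exactly $\mathcal{A}_j$ precisely because $t(c)\in\mathcal{I}_{\mathcal{P}}$ places the critical angles in the prescribed sectors. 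This itinerary bookkeeping, rather than any analytic estimate, is where the real work lies.
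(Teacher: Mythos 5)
Your proposal follows essentially the same route as the paper: choose $c\notin\mathcal{M}_d$ with external angle $t(c)$ in the characteristic arc so that the preceding lemma makes the characteristic rays co-land, deduce $\mathcal{A}_1\subseteq\mathcal{A}_1'$, and conclude $\mathcal{P}'=\mathcal{P}$ from Corollary \ref{maximal}. The one step you leave somewhat open --- that \emph{all} of $\mathcal{A}_1$, not just $\{t^-,t^+\}$, lands at $z$ --- is handled in the paper by splitting into the primitive case (where $\mathcal{A}_1=\{t^-,t^+\}$ already) and the satellite case (where $d^{lp}t^-=t^+$ forces $f_c^{\circ lp}(z)=z$ and transitivity of $d^{lp}$ on $\mathcal{A}_1$ does the propagation); your alternative of rerunning the $t(c)$-itinerary argument would also work, since that argument already shows all angles in a given $\mathcal{A}_j$ share an itinerary.
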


\begin{proof}
Let $\mathcal{P} = \{ \mathcal{A}_1 , \mathcal{A}_2, \cdots, \mathcal{A}_{p} \}$ be a formal orbit portrait with characteristic arc $\mathcal{I}_{\mathcal{P}}=\left(t^-,t^+\right)$ such that $\{ t^-,t^+ \} \subset \mathcal{A}_1$.  Choose $c$ outside the Multibrot set $\mathcal{M}_d$ with $t(c) \in \left(t^-,t^+\right)$. Then, the two dynamical rays $\mathcal{R}_{t^-}^c$ and $\mathcal{R}_{t^+}^c$ land at the same point $z \in J\left(f_c\right)$. Let $\mathcal{P}^{\prime} = \{ \mathcal{A}_1^{\prime} , \mathcal{A}_2^{\prime}, \cdots, \mathcal{A}_{p^{\prime}}^{\prime} \}$ be the orbit portrait associated with $\mathcal{O}(z)$ such that $\mathcal{A}_1^{\prime}$ is the set of angles of the rays landing at $z$.

First let's assume that $\vert \mathcal{A}_j \vert = 2$ and the first return map (multiplication by $d^p$) of $\mathcal{A}_j$ fixes each ray. In this case, $\{ t^-,t^+ \} = \mathcal{A}_1$ and $\mathcal{A}_1 \subset \mathcal{A}_1^{\prime}$. By Lemma \ref{maximal}, we have $\mathcal{P}^{\prime} = \mathcal{P}$.

On the other hand, if multiplication by $d$ acts transitively on $\mathcal{P}$, there exists $l \in \mathbb{N}$ such that $d^{lp} t^- = t^+$. It follows that $f_c^{\circ lp}(z) = z$. Since $t^-$ and $t^+$ are adjacent angles in $\mathcal{A}_1$, it follows that multiplication by $d^{lp}$ acts transitively on $\mathcal{A}_1$ and all the rays in $\mathcal{A}_1$ land at $z$. Evidently, $\mathcal{A}_1 \subset \mathcal{A}_1^{\prime}$ and once again Lemma \ref{maximal} implies that $\mathcal{P}^{\prime} = \mathcal{P}$.
\end{proof}

\begin{corollary}[Characteristic Angles Determine Formal Orbit Portraits]
\label{Lem:Characteristic angles}
Let  $\mathcal{P}$ be a non-trivial formal orbit portrait with characteristic angles
$t^-$ and $t^+$. Then a formal orbit portrait $\mathcal{P}'=\{\mathcal{A}'_1,\cdots,\mathcal{A}'_{p}\}$
equals $\mathcal{P}$ if and only if some $\mathcal{A}'_i$ contains $t^-$ and $t^+$.
\end{corollary}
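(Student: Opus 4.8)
The forward implication is immediate: if $\mathcal P'=\mathcal P$, then the characteristic arc $(t^-,t^+)$ is a critical value arc of some $\mathcal A_j$, and since the endpoints of any complementary arc of a finite set $\mathcal A_j$ belong to $\mathcal A_j$, both $t^-$ and $t^+$ lie in that $\mathcal A_j=\mathcal A'_j$. So I would focus on the converse. Relabel so that $t^\pm\in\mathcal A_1$, the block of $\mathcal P$ one of whose complementary arcs is the characteristic arc, and suppose $t^\pm\in\mathcal A'_i$. The plan is to run the primitive/satellite dichotomy of Lemma \ref{folklore} on $\mathcal P$ and show in each case that $\mathcal A'_i=\mathcal A_1$; the two portraits then coincide, since each is the forward orbit of its common block under multiplication by $d$, and the maximality Corollary \ref{maximal} rules out any incompatibility.

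If $\mathcal P$ is primitive, then $\mathcal A_1=\{t^-,t^+\}$ and the two characteristic angles lie on different cycles of multiplication by $d$. I would first observe that $\mathcal P'$ cannot be of satellite type, since a satellite portrait has all its angles on one cycle, which would put $t^-$ and $t^+$ on the same cycle. Hence $\mathcal P'$ is primitive too, so $|\mathcal A'_i|\le 2$ by Lemma \ref{folklore}; as $\mathcal A'_i$ already contains the two distinct angles $t^\pm$, we get $\mathcal A'_i=\{t^-,t^+\}=\mathcal A_1$, whence $\mathcal P'=\mathcal P$.

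If $\mathcal P$ is satellite, then by the corollary that the two characteristic angles lie on the same cycle exactly when all angles do, every angle of $\mathcal P$ lies on the single cycle $\mathcal C$ that is the orbit of $t^-$. The same cycle-counting as above forces $\mathcal P'$ to be satellite as well, so its angles form one cycle containing $t^-$, namely $\mathcal C$ again. This is the crucial point: the angle sets of $\mathcal P$ and $\mathcal P'$ are the \emph{same} set $\mathcal C$. Since, by the proof of Lemma \ref{unique smallest arc}, the characteristic arc $\mathcal I=(t^-,t^+)$ contains no angle of $\mathcal P$, it contains no angle of $\mathcal C$, hence none of $\mathcal P'$; therefore $t^-$ and $t^+$ are adjacent in $\mathcal A'_i$.

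Finally I would upgrade adjacency to $\mathcal A'_i\subseteq\mathcal A_1$ by a dynamical argument modelled on the proof of Theorem \ref{realization}. Choosing $c\notin\mathcal M_d$ with $t(c)\in\mathcal I$ produces a repelling point $z$ at which $\mathcal R_{t^-}^c$ and $\mathcal R_{t^+}^c$ land, with orbit portrait $\mathcal P$ and $\mathcal A_1$ equal to the set of all angles landing at $z$. The first-return map permutes the block $\mathcal A'_i$ as a single cyclic, orientation-preserving bijection; the power $d^{l'p'}$ carrying $t^-$ to its neighbor $t^+$ must then advance every element of $\mathcal A'_i$ by one step, so it acts transitively. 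As $t^-,t^+$ both land at $z$ we get $f_c^{\circ l'p'}(z)=z$, and iterating shows every angle of $\mathcal A'_i$ lands at $z$, i.e.\ $\mathcal A'_i\subseteq\mathcal A_1$; maximality (Corollary \ref{maximal}) applied to $\mathcal P'$ then excludes strict containment, giving $\mathcal A'_i=\mathcal A_1$ and $\mathcal P'=\mathcal P$. I expect the satellite case to be the main obstacle — specifically, marrying the same-cycle observation (which yields adjacency of $t^\pm$ in $\mathcal A'_i$) to the transitivity-of-the-first-return step that promotes "$t^\pm\in\mathcal A'_i$" to "all of $\mathcal A'_i$ lands at the common point $z$".
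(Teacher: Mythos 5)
Your argument is correct and is essentially a fleshed-out version of what the paper intends: its proof of this corollary is the single line ``follows from the proof of the previous theorem,'' i.e.\ from realizing $\mathcal{P}$ at a parameter $c \notin \mathcal{M}_d$ with $t(c) \in (t^-,t^+)$, transporting the co-landing of $\mathcal{R}_{t^-}^c$ and $\mathcal{R}_{t^+}^c$ through the transitivity of the first return map, and finishing with maximality (Corollary \ref{maximal}), exactly as you do. The combinatorial preliminaries you supply (that $\mathcal{P}'$ has the same primitive/satellite type as $\mathcal{P}$, that in the satellite case the two portraits have the same underlying angle cycle, and hence that $t^-$ and $t^+$ are adjacent in $\mathcal{A}'_i$) are precisely the details needed to make that one-line reference rigorous, and your purely combinatorial treatment of the primitive case is a harmless minor shortcut.
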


\begin{proof}
Follows from the proof of the previous theorem.
\end{proof}

\emph{Remark.} Conversely, it is easy to show that for some $c$ not in $\mathcal{M}_d$, the unicritical polynomial $f_c$ can admit the orbit portrait $\mathcal{P} = \{ \mathcal{A}_1 , \mathcal{A}_2, \cdots, \mathcal{A}_{p} \}$ only if $t(c) \in \left( t^- , t^+ \right)$, where $\left( t^- , t^+ \right)$ is the characteristic arc of $\mathcal{P}$. Indeed, the characteristic arc must be a critical value arc for some $\mathcal{A}_j$ and in the dynamical plane, the corresponding critical value sector bounded by the two rays $\mathcal{R}_{t^-}^c$ and $\mathcal{R}_{t^+}^c$ together with their common landing point contains the critical value $c$. Therefore the external angle $t(c)$ of $c$ will lie in the interval $\left( t^- , t^+ \right)$.

\begin{lemma}[Characteristic Point]
\label{CharPoint}
Every periodic orbit with a non-trivial orbit portrait has a unique point $z$, called the
\emph{characteristic point of the orbit}, with the following property: two
dynamical rays landing at $z$ separate the critical value from the critical point,
from all points of the orbit other than $z$, and from all other dynamical rays
landing at the orbit of $z$. The external angles of these two rays are exactly the
characteristic angles of the orbit portrait associated with the orbit of $z$.
\end{lemma}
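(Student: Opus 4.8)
The plan is to locate the characteristic point as the common landing point of the two characteristic rays, and then to verify the stated separation property using the nesting and covering structure already established. First I would let $\mathcal{I}_{\mathcal{P}} = \left(t^-, t^+\right)$ be the characteristic arc of the orbit portrait, which by Lemma \ref{unique smallest arc} is the unique complementary arc of minimum length and is a critical value arc for some $\mathcal{A}_j$. By the orbit portrait structure, the two rays $\mathcal{R}_{t^-}^c$ and $\mathcal{R}_{t^+}^c$ land at a common point $z = z_j$ of the orbit, and the angles $t^-, t^+$ are by definition adjacent in $\mathcal{A}_j$ with the short arc $\left(t^-, t^+\right)$ between them containing no other element of any $\mathcal{A}_k$. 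I would declare this point $z$ to be the candidate characteristic point.

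The heart of the argument is the separation property, and here I would work in the dynamical plane. The two rays $\mathcal{R}_{t^-}^c$ and $\mathcal{R}_{t^+}^c$ together with their common landing point $z$ bound two complementary sectors; one of them, the critical value sector, corresponds to the arc $\left(t^-, t^+\right)$ and contains the critical value $c$, as noted in the Remark following Corollary \ref{Lem:Characteristic angles}. Since $\mathcal{I}_{\mathcal{P}}$ is \emph{strictly} contained in every other critical value arc (again by Lemma \ref{unique smallest arc}), the critical value sector at $z$ is strictly contained in the critical value sector at every other point $z_k$ of the orbit. Consequently, for every $k \neq j$, both the critical point, every other orbit point, and every other ray landing at the orbit lie in the complementary sector of $z$, i.e.\ on the side corresponding to the long arc $\mathbb{R}/\mathbb{Z} \setminus \left[t^-, t^+\right]$. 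The key point to check is that the \emph{critical point} itself lies on this long side: this follows because the critical arc of $\mathcal{A}_j$ (which carries the critical point's sector) is the long complementary arc, disjoint from the short arc $\left(t^-, t^+\right)$.

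To make the separation claim precise I would argue as follows. Any dynamical ray landing at an orbit point $z_k$ with $k \neq j$ has its angle lying outside $\left(t^-, t^+\right)$, because the short arc contains no portrait angles at all; hence every such ray, and its landing point $z_k$, lies in the long sector. The critical value $c$ lies in the short sector while the critical point $0$ lies in the long sector, so the two characteristic rays separate $c$ from $0$. Finally, uniqueness: if some other orbit point $z_i$ admitted two rays separating $c$ from everything else in the same sense, then the arc between those two angles would be a critical value arc strictly smaller than, or incomparable with, $\mathcal{I}_{\mathcal{P}}$, contradicting the minimality and unique-nesting statement of Lemma \ref{unique smallest arc}. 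The main obstacle I anticipate is carefully translating the purely combinatorial nesting of arcs into the topological separation of sectors in the dynamical plane, and in particular confirming that the relevant rays genuinely land (which they do, since the portrait angles are periodic under multiplication by $d$ and hence land at repelling or parabolic points) so that the sectors are well defined and the word \emph{separate} carries its intended meaning.
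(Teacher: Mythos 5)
Your proposal is correct and follows essentially the same route as the paper: take $z$ to be the common landing point of the two characteristic rays, put the critical value in the sector corresponding to the characteristic arc (which is a critical value arc by Lemma \ref{blah}/\ref{unique smallest arc}), place the critical point, the rest of the orbit, and all other portrait rays in the complementary sector because the characteristic arc is the minimal complementary arc and contains no other portrait angles, and derive uniqueness from the fact that every other critical value arc strictly contains $\mathcal{I}_{\mathcal{P}}$. The only cosmetic difference is that where you invoke the critical sector/critical value sector dichotomy directly, the paper argues the same two points by contradiction on arc lengths and by pulling back $U_1$; the content is identical.
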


\begin{proof}
The characteristic rays land at a common point $z$ of the orbit and divide $\mathbb{C}$
into two open complementary components. By definition, one of the domains, say $U_1$
contains exactly the external angles from the characteristic arc $\mathcal{I}_{\mathcal{P}}$; let the
other component be $U_0$. Then clearly the closure $\overline{U_0}$ must contain all
dynamical rays with angles from the portrait, and hence the entire orbit of $z$. The
critical value must be contained in $U_1$ (or $U_1$ would have a pre-image bounded
by rays from the portrait, yielding a complementary arc of the portrait which
was shorter than the characteristic arc). Also, the critical point must be contained
in $U_0$. Indeed, if $0$ belonged to $U_1$, then $U_1$ would be a critical sector and  $\mathcal{I}_{\mathcal{P}}$ would be a critical arc of $\mathcal{P}$. But a critical arc has length greater than $\left( 1 - 1/d \right)$ and $\mathcal{I}_{\mathcal{P}}$ being the characteristic arc, has the smallest length amongst all complementary arcs of $\mathcal{P}$. Clearly, the smallest complementary arc of  $\mathcal{P}$ cannot have length greater than $\left( 1 - 1/d \right)$. This proves the lemma.
\end{proof}

\subsection{Stability of Orbit Portraits}

\begin{lemma}[Landing of Dynamical Rays]
\label{Lem:Landing of Dyn Rays}
For every map $f_c$ and every periodic angle $t$ of some period $n$, the
dynamical ray $\mathcal{R}_{t}^c$ lands at a repelling periodic point of period dividing
$n$, except in the following circumstances:
\begin{itemize}
\item
$c \in \mathcal{M}_d$ and $\mathcal{R}_{t}^c$ lands at a parabolic orbit;
\item
$c \notin  \mathcal{M}_d$ is on the parameter ray at some angle $d^kt$, for $k \in \mathbb{N}$.
\end{itemize}
Conversely, every repelling or parabolic periodic point is the landing point of at
least one periodic dynamical ray for every $c \in \mathcal{M}_d$.
\end{lemma}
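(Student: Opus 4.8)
The plan is to split along the dichotomy $c \in \mathcal{M}_d$ versus $c \notin \mathcal{M}_d$, since both the statement and the available machinery differ sharply between the connected and the disconnected case; note that the \emph{converse} assertion is claimed only for $c \in \mathcal{M}_d$ and is exactly the classical landing theorem.

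If $c \in \mathcal{M}_d$, then $J(f_c)$ is connected, and I would simply invoke the Douady--Hubbard theorem recorded in the introduction (\cite[\S 18]{M1new}): every periodic dynamical ray lands, at a repelling or at a parabolic periodic point, and conversely every repelling or parabolic periodic point is the landing point of at least one periodic ray. The parabolic alternative is precisely the first exceptional bullet; otherwise the landing point is repelling. Its period divides $n$ because $d^n t = t$ forces $f_c^{\circ n}$ to map $\mathcal{R}_t^c$ onto itself and hence to fix its landing point. This settles both directions when $c \in \mathcal{M}_d$.

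If $c \notin \mathcal{M}_d$, then $J(f_c)$ is a Cantor set and every periodic point is repelling. Here I would reuse the itinerary construction from the lemma \emph{Outside The Multibrot Sets}: writing $t(c)$ for the external angle of the critical value (equivalently, via the identity $\Phi(c)=\phi_c(c)$, the external angle of $c$ in parameter space), every angle that is not an iterated preimage of $t(c)$ under multiplication by $d$ carries a well-defined $t(c)$-itinerary, and the corresponding ray lands at the unique point of $J(f_c)$ with that symbol sequence. That a non-crashing periodic ray genuinely lands, rather than merely accumulating, follows because its accumulation set is a decreasing intersection of compact connected sets, hence connected, and lies in the totally disconnected $J(f_c)$, so it reduces to a single point. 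Thus, whenever $t$ is not an iterated preimage of $t(c)$, the ray $\mathcal{R}_t^c$ lands at a point whose itinerary is periodic of period dividing $n$, i.e.\ at a repelling periodic point of period dividing $n$.

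It remains to match the excluded angles with the second bullet, which is the only delicate bookkeeping. The ray $\mathcal{R}_t^c$ fails to land exactly when some forward image $\mathcal{R}_{d^j t}^c$ passes through the critical point $0$; since the $d$ rays through $0$ are the $f_c$-preimages of the critical-value ray $\mathcal{R}_{t(c)}^c$, this happens iff $d^{j+1}t = t(c)$ for some $j \geq 0$, i.e.\ iff $t(c) = d^k t$ for some $k$ (since $t$ is periodic, whether or not $k=0$ is admitted does not change this locus). Through the identity $\Phi(c) = \phi_c(c)$ this reads precisely ``$c$ lies on the parameter ray at angle $d^k t$,'' which is the second exceptional bullet. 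I expect the main obstacle to be exactly this identification of the crashing locus with the parameter-ray condition via the duality between the dynamical and parameter B\"ottcher coordinates; everything else reduces to the cited classical theorem and to the itinerary set-up already established in this section.
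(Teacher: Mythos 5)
Your argument is correct, but it is more self-contained than what the paper actually does: the paper's entire proof is a citation, namely \cite[Theorems 18.10 and 18.11]{M1new} for $c \in \mathcal{M}_d$ and \cite[Appendix A]{GM1} for $c \notin \mathcal{M}_d$. For the connected case you do the same thing (invoke the classical Douady--Hubbard landing theorem), so there the two proofs coincide. For the disconnected case you replace the reference to Goldberg--Milnor by a direct argument reusing the $t(c)$-itinerary machinery that this paper sets up only later, in the proof of the ``Outside The Multibrot Sets'' lemma: landing follows because the accumulation set of a non-crashing ray is connected and sits in a totally disconnected Julia set, periodicity of the landing point follows from invariance of the ray under $f_c^{\circ n}$ (or from uniqueness of the point with a given periodic itinerary), and the crashing locus $\{\,t(c)=d^k t\,\}$ is translated into the second exceptional bullet via $\Phi(c)=\phi_c(c)$. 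This buys a proof that does not lean on an external appendix, at the modest cost of having to justify that the itinerary map on the Cantor Julia set is a bijection onto the full shift and of the Böttcher-coordinate bookkeeping you flag yourself; both are standard for unicritical polynomials with escaping critical orbit, so there is no gap.
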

\begin{proof}
This is well known. For the case $c \in \mathcal{M}_d$, see \cite[Theorem 18.10]{M1new}; if
$c \notin \mathcal{M}_d$, see \cite[Appendix A]{GM1}. For the converse, see
\cite[Theorem 18.11]{M1new}.
\end{proof}

We should note that for every periodic point $z_0$ of $f_{c_0}$
with multiplier $\lambda(c_0,z_0) \neq 1$, the orbit of $z_0$ remains
locally stable under perturbation of the parameter by the Implicit
Function Theorem. With some restrictions this is even true for the
associated portrait.

\begin{lemma}[Stability of Portraits]\label{l:preserving-portrait1}
Let $c_0$ be a parameter such that $f_{c_0} = z^d + c$ has a repelling periodic point $z_0$ so that the rays at angles $A := \{ t_1,\cdots,t_v \}$ $(v \geq 2)$ land at $z_0$. Then there
exist a neighborhood $U$ of $c_0$ and a unique holomorphic function $z \colon U \to \mathbb{C}$
with $z(c_0)=z_0$ so that for every $c\in U$ the point $z(c)$ is a repelling
periodic point for $f_{c}$ and all rays with angles in $A$ land at $z(c)$.

Let $n$ be the common period of the rays in $A$. Let $U$ be an open, path connected neighborhood of $c_0$ which is disjoint from all parabolic parameters of ray period $n$ and from all parameter rays at angles in $\tilde A:=\bigcup_{j\geq 0}d^{j}A$., then for all parameters $c\in U$, exactly the rays at angles in $A$ (and no more) co-land. The same is true at any parabolic parameter of ray period $n$ on the boundary of $U$.
\end{lemma}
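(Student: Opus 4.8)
The plan is to establish the local part (existence of the holomorphic continuation $z(c)$) via the Implicit Function Theorem, then upgrade it to the stronger global statement about the full landing pattern using stability of dynamical rays together with the combinatorial rigidity provided by the realization theorem (Theorem~\ref{realization}) and maximality (Corollary~\ref{maximal}).

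First I would prove the first paragraph. Since $z_0$ is a repelling periodic point of period $p$ dividing $n$, its multiplier $\lambda(c_0,z_0)$ satisfies $\lvert\lambda\rvert>1$, in particular $\lambda\neq 1$, so the equation $f_c^{\circ p}(z)=z$ has a simple root at $(c_0,z_0)$ and the Implicit Function Theorem yields a neighborhood $U$ of $c_0$ and a unique holomorphic $z\colon U\to\mathbb{C}$ with $z(c_0)=z_0$ and $z(c)$ repelling periodic. That each individual ray $\mathcal{R}_{t_i}^c$ with $t_i\in A$ lands at $z(c)$ and varies continuously (indeed holomorphically in the landing point) for $c$ near $c_0$ is the standard stability of landing of periodic dynamical rays at repelling points: the ray depends continuously on $c$ as long as it stays away from precritical points, and its landing point is forced to be the unique repelling periodic point it tracks. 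Shrinking $U$ if necessary, all $v$ rays land together at $z(c)$ throughout $U$.

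The second paragraph is where the real content lies. I would fix $U$ as specified: open, path connected, disjoint from all parabolic parameters of ray period $n$ and from all parameter rays at angles in $\tilde A=\bigcup_{j\ge 0}d^{j}A$. The goal is to show that for \emph{every} $c\in U$, the set of co-landing rays is \emph{exactly} $A$. One inclusion follows from the first paragraph after noting that co-landing is a closed and open condition on $U\setminus(\text{bad set})$: the rays in $A$ continue to co-land as we move through $U$ because the only ways this pattern can break are (i) the landing point ceases to be repelling, i.e.\ we hit a parabolic parameter of ray period $n$, or (ii) one of the rays $\mathcal{R}_{d^{j}t_i}^c$ crashes into the critical point, which happens precisely when $c$ lies on a parameter ray at an angle in $\tilde A$; both are excluded on $U$. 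For the reverse inclusion---that no \emph{additional} rays can join the orbit---I would argue by maximality: if at some $c_1\in U$ a larger set $A'\supsetneq A$ of rays co-landed, the associated orbit portrait $\mathcal{P}'$ would properly contain the portrait $\mathcal{P}$ determined by $A$, contradicting Corollary~\ref{maximal}, since by Corollary~\ref{Lem:Characteristic angles} a portrait is pinned down by its characteristic angles and these lie in $A$. Finally, the claim at a boundary parabolic parameter $c_*\in\partial U$ of ray period $n$ follows by taking limits: the rays in $A$ still land together at the continuation of $z$ (now parabolic, but still a landing point of periodic rays by Lemma~\ref{Lem:Landing of Dyn Rays}), and no extra rays can appear by the same maximality argument applied in the limit.

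\textbf{The main obstacle} I expect is the reverse inclusion handled carefully along the whole of $U$, not just near $c_0$: I must rule out that the co-landing set \emph{jumps} to something incomparable with $A$ as $c$ traverses $U$. The clean way is to use that the two characteristic angles $t^-,t^+$ of $\mathcal{P}$ lie in $\tilde A$ and their rays persist in co-landing (the characteristic-point argument of Lemma~\ref{CharPoint} is stable), so by Corollary~\ref{Lem:Characteristic angles} the portrait is determined throughout $U$; path connectedness of $U$ then propagates the conclusion from $c_0$ to all of $U$ via the open-closed dichotomy, with the excluded bad set guaranteeing no discontinuity intervenes.
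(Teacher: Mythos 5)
Your proposal is correct and follows essentially the same route as the paper: local continuation of the repelling orbit with its rays, then an open--closed argument on the path-connected set $U$ (openness from local stability at repelling points, closedness from the persistence of rays landing at \emph{distinct} repelling points), and finally Corollary~\ref{maximal} to exclude additional co-landing rays. The only cosmetic difference is your extra appeal to the characteristic angles via Corollary~\ref{Lem:Characteristic angles}, which the paper does not need since maximality alone suffices.
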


\begin{proof}
The point $z_0$ can be continued analytically as a repelling periodic point of
$f_{c_0}$ in a neighborhood of $c_0$. By \cite[Lemma B.1]{GM1}, this orbit will
keep its periodic dynamical rays in a neighborhood of $c_0$; it might possibly gain
extra rays.

Since $U$ doesn't intersect any dynamical ray of period $n$, for all $c \in U$, the dynamical rays in $A$ indeed land. Let $U^{\prime}$ be the subset of $U$ where the rays in $A$ continue to land together. For any $c \in U^{\prime}$, the common landing point of the rays in $A$ must be repelling as $U$ doesn't contain any parabolic point of ray period $n$. By local stability of co-landing rays at repelling periodic points, we conclude that $U^{\prime}$ is open in $U$.

Let $c$ be a limit point of $U^{\prime}$ in $U$. The external rays at angles in $A$ do land in the dynamical plane of $f_c$; we claim that they all co-land. Otherwise, at least two rays at angles $\theta_1$ and $\theta_2$ (say) in $A$ land at two different repelling periodic points in the dynamical plane of $f_c$ and by implicit function theorem, these two rays would continue to land at different points for all nearly parameters. This contradicts the fact that $c$ lies on the boundary of $U^{\prime}$. Hence, $U^{\prime}$ is closed in $U$. Therefore, $U^{\prime} = U$; i.e. the rays in $A$ co-land throughout $U$.

Finally, it follows from the maximality property (Lemma \ref{maximal}) of non-trivial orbit portraits that no dynamical ray at angle $\theta \notin A$ can co-land along with the rays at angles in $A$.
\end{proof}

The local stability of external rays landing at repelling periodic points is true for arbitrary polynomials (not necessarily unicritical) as well. However, there is a danger that such an orbit gains periodic rays: if for a parameter
$c_0$ some periodic ray lands at a parabolic orbit, then under small perturbations all continuations
of the parabolic orbit may lose the periodic ray, and this ray can land at a different repelling orbit
for all sufficiently small (non-zero) perturbations. This happens for general cubic or biquadratic polynomials (compare \cite[\S 6]{MNS}).

\subsection{Wakes}
We begin a basic lemma which states that the set of parabolic parameters are isolated.
 
\begin{lemma}[Parabolic Parameters Are Countable]
\label{l:parcount}
For $n \in \mathbb{N}$ the number of parameters with parabolic orbits of ray period $n$ is
finite.
\end{lemma}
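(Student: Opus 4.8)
The plan is to translate the condition of being a parabolic parameter of ray period $n$ into the vanishing of a single polynomial in $c$, and then to observe that this polynomial does not vanish identically, so that its zero set is finite.

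First I would pin down the relation between the ray period and the multiplier of the parabolic cycle. Suppose $c$ is parabolic of ray period $n$: there is a parabolic cycle of some exact period $q$, and the periodic dynamical rays landing on it have period $n$ under multiplication by $d$. Let $z$ be a point of this cycle and $\lambda = (f_c^{\circ q})'(z)$ its multiplier, a root of unity by definition of parabolicity. The first return map $h:=f_c^{\circ q}$ fixes $z$ and permutes the finitely many rays landing at $z$ (Lemma on Finitely Many Rays) with combinatorial rotation number equal to that of $\lambda$; hence if $\lambda$ is a primitive $r$-th root of unity then each such ray has period $qr$, forcing $n = qr$. Since $z$ is a fixed point of $h$ with $h'(z)=\lambda$, the chain rule at the fixed point gives $(f_c^{\circ n})'(z) = (h^{\circ r})'(z) = \lambda^{r} = 1$. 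Thus every point of the parabolic cycle is a fixed point of $f_c^{\circ n}$ with multiplier exactly $1$.

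Now set $g_{n,c}(z) := f_c^{\circ n}(z) - z$, a monic polynomial of degree $d^{n}$ in $z$ whose coefficients are polynomials in $c$. The previous paragraph shows that every point $z$ of a parabolic cycle of ray period $n$ satisfies $g_{n,c}(z) = 0$ and $g_{n,c}'(z) = (f_c^{\circ n})'(z) - 1 = 0$, so that $z$ is at least a double root of $g_{n,c}$. Consequently $c$ must be a zero of the discriminant $\Delta_n(c) := \mathrm{Disc}_z\, g_{n,c}$, which is itself a polynomial in $c$; hence the set of parabolic parameters of ray period $n$ is contained in the zero set of $\Delta_n$. It remains to check that $\Delta_n$ is not identically zero, and for this it suffices to exhibit one parameter at which $g_{n,c}$ has only simple roots. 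Taking $c = 0$ gives $g_{n,0}(z) = z^{d^{n}} - z$; since $g_{n,0}'(z) = d^{n} z^{d^{n}-1} - 1$ cannot vanish at any root of $g_{n,0}$ (at $z=0$ the derivative is $-1$, and at a nonzero root $z^{d^{n}-1}=1$ gives derivative $d^{n}-1 \neq 0$), all $d^{n}$ roots are simple and $\Delta_n(0) \neq 0$. Therefore $\Delta_n$ is a nonzero polynomial, its zero set is finite, and finiteness of the parabolic parameters of ray period $n$ follows.

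The only genuinely delicate step is the first one: establishing that ray period $n$ forces the multiplier to be a root of unity whose order $r$ satisfies $n = qr$, so that each parabolic point becomes a multiplier-one fixed point of $f_c^{\circ n}$. This rests on the standard matching between the combinatorial rotation number of the rays under the first return map and the rotation number of the multiplier. Once that normalization is secured, both the reduction to the discriminant and the nonvanishing at $c=0$ are entirely routine.
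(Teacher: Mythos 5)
Your proof is correct and follows essentially the same route as the paper's: both reduce the statement to the vanishing of the discriminant in $z$ of $f_c^{\circ n}(z)-z$, which is a polynomial in $c$. In fact you supply two details the paper's proof leaves implicit --- the justification that ray period $n$ forces $(f_c^{\circ n})'(z)=1$ at every point of the parabolic cycle (rather than merely at the $q$-th iterate), and the verification at $c=0$ that the discriminant is not identically zero, so its zero set really is finite.
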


\begin{proof}
Let $Q(c,z):=f_c^{\circ n}(z) - z$, considering it as a polynomial in $z$ whose coefficients are polynomials in $c$. If $f_c$ has a parabolic orbit of ray period $n$, then we have: $f_c^{\circ n}(z)=z$ and $\frac d{dz}f_c^{\circ n}(z)=1$.

In other terms, this reads: $Q(c,z)=0$ and $\frac d{dz}Q(c,z)=0$. Therefore, for such a parameter $c$, $Q(c,z)$ has a multiple root in $z$ forcing its discriminant to vanish. The discriminant of $Q(c,z)$ (viewed as a polynomial in $z$) is simply a polynomial in $c$. Therefore, there are only finitely many such values of $c$, which finishes the proof.
\end{proof}

The previous lemma is false if ``ray period $n$'' is replaced by ``orbit period
$n$'': the boundary of every hyperbolic component of period $n$ contains a dense set
of parabolic parameters with orbit period $n$.

As a consequence of the previous lemma and the stability of orbit portraits near repelling periodic points, we deduce the fundamental fact that every parameter ray at a periodic angle lands.

\begin{lemma}[Periodic Parameter Rays Land]
\label{per-rays-land}
Let $\theta$ be an angle of exact period $n$ under multiplication by $d$. Then the parameter
ray $\mathcal{R}_{\theta}$ lands at a parabolic parameter $c_0$ with parabolic
orbit of exact ray period $n$ such that the dynamical ray $\mathcal{R}_{\theta}^{c_0}$
lands at a point of the parabolic orbit.
\end{lemma}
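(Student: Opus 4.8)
The plan is to study the accumulation set of $\mathcal{R}_\theta$ and to show, using the stability of portraits together with the finiteness of parabolic parameters, that this set degenerates to a single parabolic parameter. Write $\gamma(r):=\Phi^{-1}\left(re^{2\pi i\theta}\right)$ for $r>1$ and let $L:=\bigcap_{\epsilon>0}\overline{\gamma\left((1,1+\epsilon)\right)}$ be the accumulation set of the ray as $r\to 1^+$. Since $\mathcal{M}_d$ is compact and $\gamma$ is continuous, $L$ is a nested intersection of non-empty compact connected sets, hence itself non-empty, compact and connected, and it is contained in $\partial\mathcal{M}_d$. The entire statement then reduces to proving that \emph{every} $c_\ast\in L$ is a parabolic parameter whose parabolic orbit has exact ray period $n$ and at which $\mathcal{R}_\theta^{c_\ast}$ lands: granting this, Lemma \ref{l:parcount} shows $L$ is contained in a finite set, so the connected set $L$ must be a single point $c_0$, which is precisely the assertion that $\mathcal{R}_\theta$ lands, at a parabolic parameter $c_0$ with the required properties.

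So fix $c_\ast\in L\subset\mathcal{M}_d$. As $\theta$ is periodic, Lemma \ref{Lem:Landing of Dyn Rays} guarantees that $\mathcal{R}_\theta^{c_\ast}$ lands, either at a repelling or at a parabolic periodic point. \textbf{The heart of the proof is to exclude the repelling case,} and this is where I expect the only genuine difficulty to lie. Suppose $\mathcal{R}_\theta^{c_\ast}$ landed at a repelling periodic point $z_\ast$. By the local stability of rays landing at repelling periodic points (the mechanism underlying Lemma \ref{l:preserving-portrait1} and the discussion preceding it), the point $z_\ast$ continues holomorphically and keeps the ray $\mathcal{R}_\theta^{c}$ landing at it throughout a full neighborhood $U$ of $c_\ast$, as long as the dynamical ray does not collide with the critical value. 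However, $c_\ast\in L$ means that points of the parameter ray $\mathcal{R}_\theta$ lie arbitrarily close to $c_\ast$, so $U$ contains parameters $c'\notin\mathcal{M}_d$ with $t(c')=\theta$; for such $c'$ the critical value has external angle $\theta$ and therefore sits on the ray $\mathcal{R}_\theta^{c'}$. By the second exceptional case of Lemma \ref{Lem:Landing of Dyn Rays} (recall $\theta=d^{n}\theta$), the ray $\mathcal{R}_\theta^{c'}$ does \emph{not} land at a repelling periodic point. This contradicts the persistence just obtained and rules out the repelling alternative.

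Consequently $\mathcal{R}_\theta^{c_\ast}$ lands at the parabolic orbit of $f_{c_\ast}$, so $c_\ast$ is parabolic; and since $\theta$ has \emph{exact} period $n$, the periodic dynamical ray landing at the parabolic orbit has exact period $n$, i.e. the parabolic orbit has exact ray period $n$. This is exactly the claim required in the first paragraph, so Lemma \ref{l:parcount} together with the connectedness of $L$ forces $L=\{c_0\}$, with $c_0$ parabolic of exact ray period $n$ and $\mathcal{R}_\theta^{c_0}$ landing at its parabolic orbit. The one delicate point, which I would phrase with care, is the honest justification that the landing at a repelling point persists in a two-dimensional neighborhood up to ray/critical-value collisions, so that the contradiction in the second paragraph is airtight; beyond that, everything is a packaging of compactness, connectedness, and the finiteness Lemma \ref{l:parcount}.
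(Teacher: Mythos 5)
Your proposal is correct and follows essentially the same route as the paper: for any accumulation point of $\mathcal{R}_\theta$ one rules out landing of $\mathcal{R}_\theta^{c}$ at a repelling point via stability of such landing under perturbation versus the failure of the ray to land for nearby parameters on the parameter ray, then concludes via Lemma \ref{Lem:Landing of Dyn Rays}, the finiteness of parabolic parameters of ray period $n$ (Lemma \ref{l:parcount}), and connectedness of the accumulation set. Your write-up merely makes the compactness/connectedness bookkeeping more explicit than the paper does.
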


\begin{proof}
We follow the method of Goldberg and Milnor (\cite[Theorem C.7]{GM1}). 
Let $c \in \mathcal{M}_d$ be a limit point of the parameter ray $\mathcal{R}_{\theta}$. If the dynamical
ray $\mathcal{R}_{\theta}^c$ landed at a repelling periodic point, then it would continue to do
so in a neighborhood $U$ of $c$ by Lemma  \cite[Lemma B.1]{GM1}. But for any parameter on the
parameter ray $\mathcal{R}_{\theta}$, this is impossible since for such a parameter, the dynamical ray at angle $\theta$ bounces off a pre-critical point and fails to land. Therefore, by Lemma \ref{Lem:Landing of Dyn Rays}, the dynamical ray $\mathcal{R}_{\theta}^c$ must land at a parabolic periodic point and $c$ is one of the finitely many parabolic
parameters with a parabolic orbit of ray period $n$ (Lemma \ref{l:parcount}).
Since the set of limit points of any ray is connected, the claim follows.
\end{proof}

\begin{figure}[!ht]\label{wake}
\begin{minipage}{0.48\linewidth}
\begin{center}
\includegraphics[scale=0.24]{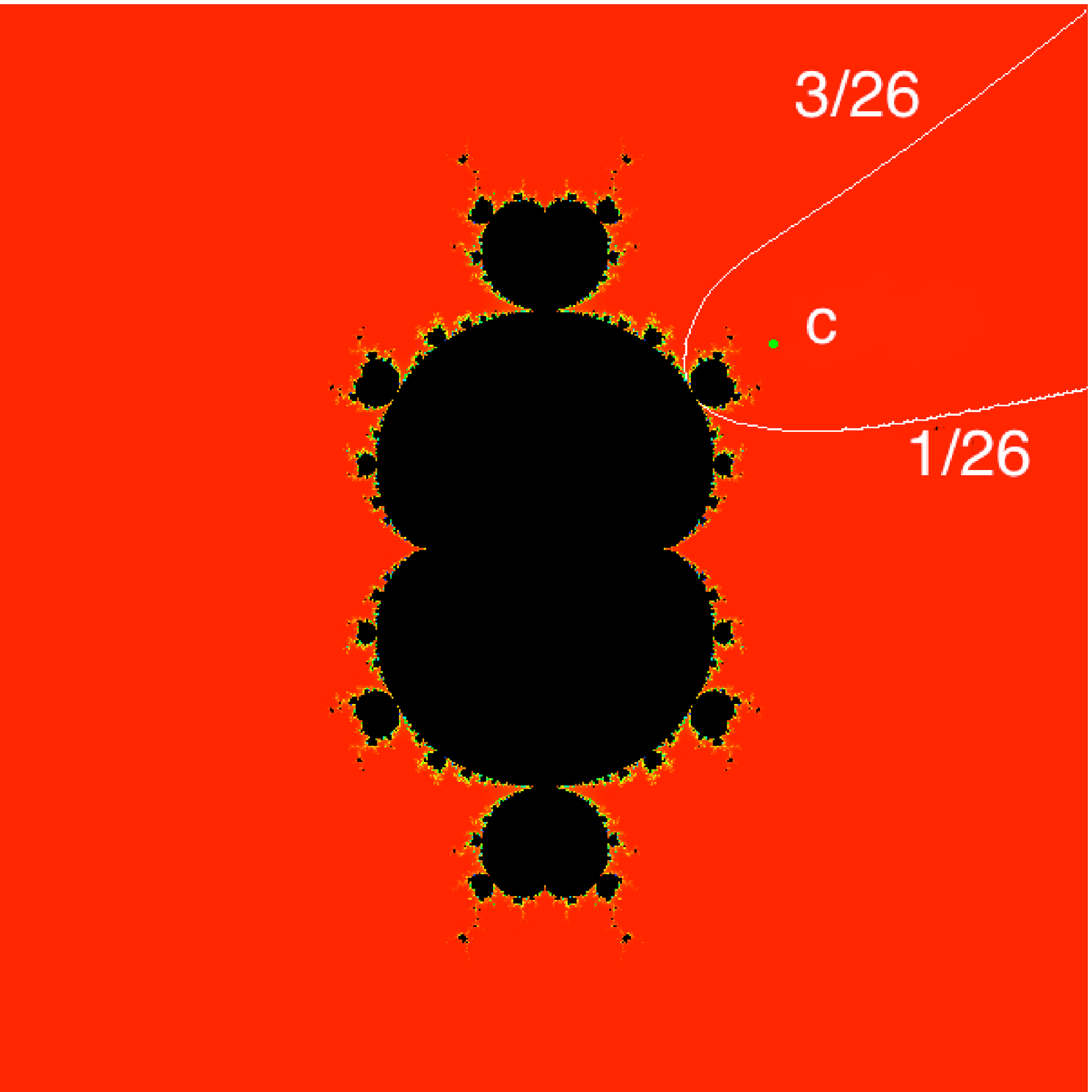}
\end{center}
\end{minipage}
\begin{minipage}{0.48\linewidth}
\begin{center}
\includegraphics[scale=0.24]{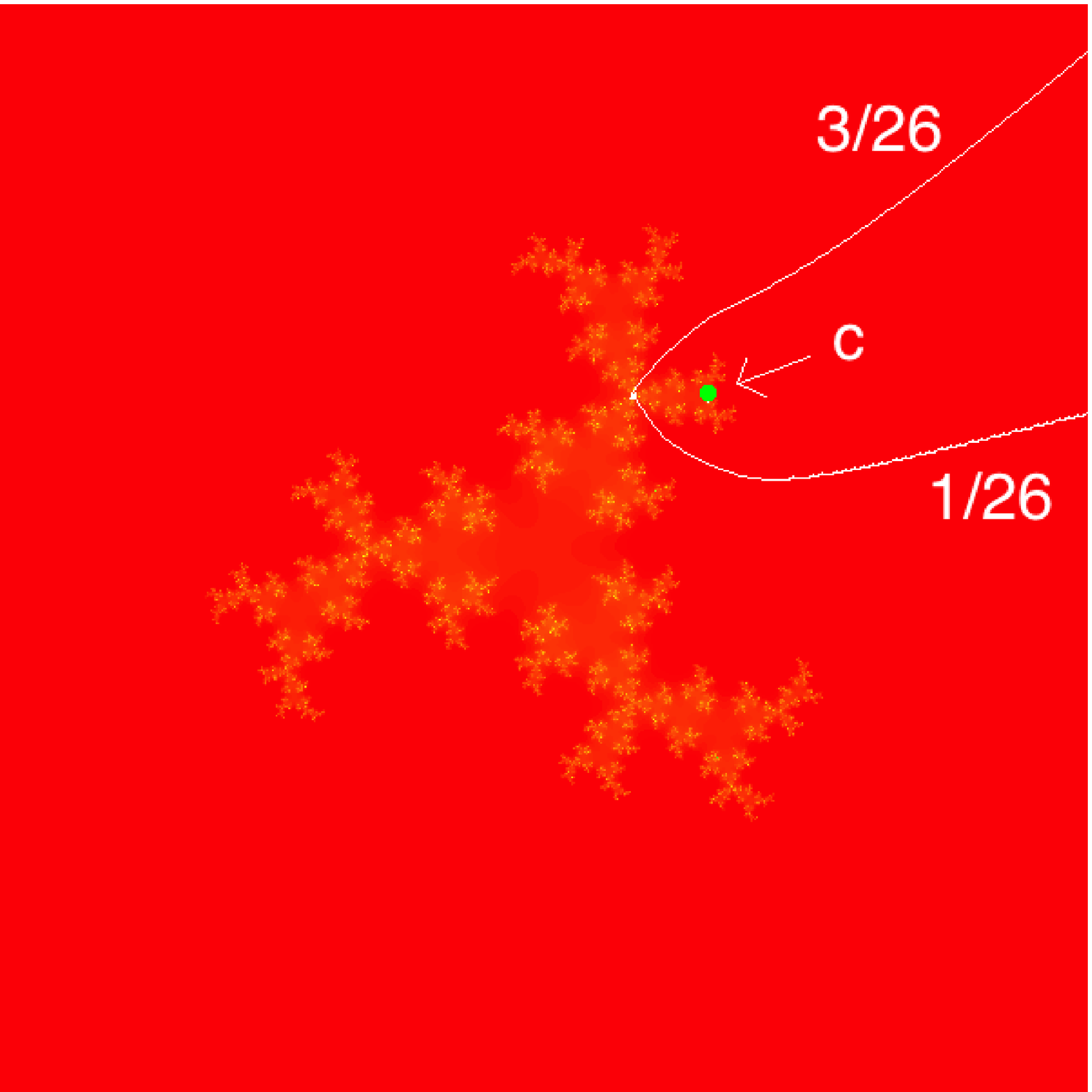}
\end{center}
\end{minipage}
\caption{Left : The $\mathcal{P}$-wake of~$\mathcal{M}_3$ for the portrait~$\mathcal{P}$ with the
characteristic interval~$\left(1/26,3/26\right)$. Right : The Julia set of~$z \mapsto z^3+c$ for a parameter~$c$ in the $\mathcal{P}$-wake.}
\end{figure}

\begin{theorem}[Parameter Rays Landing at a Common Point]
\label{t:p-wake}
Let $\mathcal{P}$ be a non-trivial portrait with characteristic
angles $t^{-}$ and $t^{+}$. Then the parameter
rays $\mathcal{R}_{t^{-}}$ and $\mathcal{R}_{t^{+}}$ land at a common
parabolic parameter.
\end{theorem}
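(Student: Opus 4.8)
The plan is to first reduce the statement to the identification of two parabolic parameters, and then to control the common boundary of the two parameter rays by means of a wake constructed from the stability of orbit portraits. Let $n$ be the common ray period of the angles of $\mathcal{P}$. Applying Lemma~\ref{per-rays-land} to each characteristic angle separately, $\mathcal{R}_{t^-}$ lands at a parabolic parameter $c^-$ of ray period $n$ with $\mathcal{R}_{t^-}^{c^-}$ landing at the parabolic orbit, and likewise $\mathcal{R}_{t^+}$ lands at a parabolic parameter $c^+$. The theorem is therefore equivalent to showing $c^- = c^+$ together with the assertion that both characteristic dynamical rays land at the resulting parabolic orbit.

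Second, I would build the wake. For any $c \notin \mathcal{M}_d$ whose external angle $t(c)$ lies in $(t^-, t^+)$, the lemma ``Outside the Multibrot Sets'' shows that $\mathcal{R}_{t^-}^c$ and $\mathcal{R}_{t^+}^c$ co-land at a repelling point $z(c)$; by Theorem~\ref{realization} and Corollary~\ref{Lem:Characteristic angles}, the orbit of $z(c)$ carries exactly the portrait $\mathcal{P}$. Let $W$ be the connected component, containing this outer sector, of the set of parameters $c$ for which $\mathcal{R}_{t^-}^c$ and $\mathcal{R}_{t^+}^c$ co-land at a repelling periodic point. By Lemma~\ref{l:preserving-portrait1}, $W$ is open, and co-landing persists across every path-connected set that avoids the parabolic parameters of ray period $n$ and the parameter rays at angles in $\tilde A := \bigcup_{j \geq 0} d^j\{t^-, t^+\}$. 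Since Lemma~\ref{l:parcount} bounds the number of parabolic parameters of ray period $n$, the boundary $\partial W$ is contained in the union of these finitely many parameter rays and finitely many parabolic parameters.

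Third, I would locate $c^-$ and $c^+$ on $\partial W$ and verify that the full portrait degenerates there. Approaching $c^-$ from inside $W$, the repelling co-landing point $z(c)$ converges to the parabolic orbit; the final clause of Lemma~\ref{l:preserving-portrait1} then gives that $\mathcal{R}_{t^-}^{c^-}$ and $\mathcal{R}_{t^+}^{c^-}$ co-land at the parabolic orbit, and Corollary~\ref{Lem:Characteristic angles} identifies that orbit's portrait as $\mathcal{P}$ with characteristic angles $t^-, t^+$. The same holds at $c^+$. Thus both $c^-$ and $c^+$ realize $\mathcal{P}$ parabolically, and at each the two characteristic dynamical rays land at the parabolic orbit.

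The main obstacle is the final identification $c^- = c^+$. The plan is to show that the portion of $\partial W$ accessible from the sector $(t^-, t^+)$ consists of precisely the two parameter rays $\mathcal{R}_{t^-}$, $\mathcal{R}_{t^+}$ and the parabolic parameters at which the portrait degenerates. If $c^- \neq c^+$, then a nondegenerate arc of $\partial W \cap \mathcal{M}_d$ would separate them; choosing a boundary point $c$ on this arc that is neither parabolic of ray period $n$ nor on a ray at an angle of $\tilde A$ (possible by finiteness), Lemma~\ref{l:preserving-portrait1} would force the two characteristic rays to co-land at a repelling orbit throughout an entire neighborhood of $c$, placing $c$ in the interior of $W$, which is a contradiction. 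Hence the boundary arc between $c^-$ and $c^+$ could contain only parabolic parameters and points of the two parameter rays; since $t^-$ and $t^+$ are adjacent in the portrait, no further ray of $\tilde A$ separates them, and a monodromy argument around the finitely many parabolic parameters then forces the two landing corners of the wake to coincide. I expect this control of the global topology of $\partial W$, namely excluding a genuine arc of boundary between two distinct parabolic parameters, to be the crux, and the point at which the monodromy argument advertised in the introduction does the real work.
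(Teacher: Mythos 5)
Your overall architecture is the same as the paper's: reduce to showing that the two landing points $c^-$ and $c^+$ given by Lemma~\ref{per-rays-land} coincide, realize $\mathcal{P}$ on the repelling co-landing point for $c\notin\mathcal{M}_d$ with $t(c)\in(t^-,t^+)$, and use Lemma~\ref{l:preserving-portrait1} together with Lemma~\ref{l:parcount} to confine the boundary of your region $W$ to the parameter rays at angles in $\tilde A$ and finitely many parabolic parameters (the paper works with the component $U_1$ of $\mathbb{C}\setminus\bigl(\bigcup_{t}\mathcal{R}_t\cup F_n\bigr)$ containing the sector of external angles in $(t^-,t^+)$, which is the same device). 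However, the decisive final step is missing, in two respects. First, the claim that $c^-\neq c^+$ would produce ``a nondegenerate arc of $\partial W\cap\mathcal{M}_d$'' separating the two landing points is unjustified, and in fact the opposite of what your own boundary analysis yields: since each parameter ray has a single landing point, no chain of rays in $\tilde A$ can connect $\overline{\mathcal{R}_{t^-}}$ to $\overline{\mathcal{R}_{t^+}}$ unless they already co-land, and two disjoint closed arcs together with a finite set of points do not separate the plane. So if $c^-\neq c^+$ there is no barrier at all, and $W$ leaks around one of the landing points to parameters $c\notin\mathcal{M}_d$ with $t(c)\notin(t^-,t^+)$.

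Second, the contradiction must then come from somewhere, and the unspecified ``monodromy argument around the finitely many parabolic parameters'' does not supply it: the co-landing point $z(c)$ is single-valued on $W$ (it is by definition the landing point of $\mathcal{R}_{t^-}^{c}$), so transporting it around a loop cannot produce a monodromy contradiction, and for satellite parabolics the period-$n$ orbit has trivial local monodromy in any case. The ingredient that actually closes the argument is the remark following Theorem~\ref{realization}: for $c\notin\mathcal{M}_d$ the portrait $\mathcal{P}$ can be realized only if $t(c)\in(t^-,t^+)$, because the characteristic sector must contain the critical value. Since by stability every parameter of $W$ outside $\mathcal{M}_d$ carries a repelling orbit with portrait $\mathcal{P}$, the leaked parameters with $t(c)\notin(t^-,t^+)$ give the contradiction. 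Replacing your monodromy step by this external-angle obstruction repairs the proof; the rest of your outline goes through.
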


\begin{proof}
For $\mathcal{P} = \{ A_1,\cdots,A_{p}\}$, let $n$ be the common period of
the angles in $A_1\cup\ldots\cup A_{p}$ and let $F_n$ be the
set of all parabolic parameters of ray period $n$.
Consider the connected components $U_i$ of $\displaystyle \mathbb{C} \setminus (\bigcup_{t\in A_1\cup\cdots\cup A_{p}} \mathcal{R}_t \cup F_n)$. Since $\bigcup A_i$ and $F_n$ are finite (Lemma \ref{l:parcount}) there are
only finitely many components and by Proposition \ref{per-rays-land}
they are open. By Lemma \ref{l:preserving-portrait1}, throughout every component $U_i$ 
the same rays with angles in $A_1\cup\ldots\cup A_{p}$ land at common points.
 
Let $U_1$ be the component which contains all parameters $c$ outside $\mathcal{M}_d$ with external angle $t(c)\in (t^{-},t^{+})$ (there is such a component as $(t^{-}, t^{+})$ does not contain any other angle of $\mathcal{P}$).  $U_1$ must have the two parameter rays $\mathcal{R}_{t^+}$ and $\mathcal{R}_{t^-}$ on its boundary.  By Theorem \ref{realization} and Lemma \ref{l:preserving-portrait1}, each $c \in U_1\setminus \mathcal{M}_d$ has a repelling periodic orbit admitting the portrait $\mathcal{P}$. If the two parameter rays at angles $t^+$ and $t^-$ do not co-land, then $U_1$ would contain parameters $c$ outside $\mathcal{M}_d$ with $t(c)\notin (t^{-},t^{+})$. It follows form the remark at the end of Theorem \ref{realization} that such a parameter can never admit the orbit portrait $\mathcal{P}$, a contradiction. Hence, the parameter rays $\mathcal{R}_{t^+}$ and $\mathcal{R}_{t^-}$ at the characteristic angles must land at the same point of $\mathcal{M}_d$.
\end{proof}

\begin{definition}[Wake]
\label{d:wake}
For an essential portrait $\mathcal{P}$ with characteristic angles $t^-$ and
$t^+$, the \emph{wake} of the portrait (or simply the $\mathcal{P}$-wake of $\mathcal{M}_d$) is
defined to be the component of $\mathbb{C}\setminus (\mathcal{R}_{t^{-}}\cup
\mathcal{R}_{t^{+}}\cup\{ c_0\})$ not containing $0$, where $c_0$ is the common
landing point of $\mathcal{R}_{t^{-}}$ and $\mathcal{R}_{t^{-}}$. The wake associated with the orbit portrait $\mathcal{P}$ is denoted by $\mathcal{W}_{\mathcal{P}}$.
\end{definition}

The wake is well-defined because the parameter $0$ can never be on the boundary
of the partition.

\begin{lemma}[Portrait Realized Only in Wake]
\label{c:wake}
For every non-trivial portrait $\mathcal{P}$, the associated wake $\mathcal{W}_{\mathcal{P}}$ is exactly the locus of
parameters $c$ for which the map $f_c$ has a repelling orbit with portrait $\mathcal{P}$.
\end{lemma}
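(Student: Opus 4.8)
The plan is to reduce to a single clean criterion and then run a connectedness argument on the wake. By Corollary~\ref{Lem:Characteristic angles}, the map $f_c$ has a repelling orbit with portrait $\mathcal{P}$ if and only if the two characteristic rays $\mathcal{R}_{t^-}^c$ and $\mathcal{R}_{t^+}^c$ co-land at a repelling periodic point; writing $\mathcal{L}_{\mathcal{P}}$ for this realization locus, I would work throughout with that condition. First I would record that $\mathcal{L}_{\mathcal{P}}$ is open: if the two rays co-land at a repelling point for some $c$, then by the first part of Lemma~\ref{l:preserving-portrait1} they keep co-landing at a nearby repelling point, while maximality (Corollary~\ref{maximal}) forbids the acquisition of any further rays, so the portrait stays equal to $\mathcal{P}$. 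I would also note that every $c \in \mathcal{L}_{\mathcal{P}}$ avoids $\mathcal{R}_{t^-} \cup \mathcal{R}_{t^+}$ (on these parameter rays the corresponding dynamical ray runs into a precritical point and fails to land, as in the proof of Lemma~\ref{per-rays-land}) and avoids the common parabolic landing point $c_0$ of Theorem~\ref{t:p-wake}; hence $c$ lies in one of the two components of $\mathbb{C} \setminus (\mathcal{R}_{t^-} \cup \mathcal{R}_{t^+} \cup \{c_0\})$.

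For the inclusion $\mathcal{L}_{\mathcal{P}} \subseteq \mathcal{W}_{\mathcal{P}}$ the anchor is the remark following Theorem~\ref{realization}: any $c \notin \mathcal{M}_d$ admitting $\mathcal{P}$ has external angle $t(c) \in (t^-,t^+)$, hence lies in $\mathcal{W}_{\mathcal{P}}$, so no exterior realizer sits in the complementary component. The reverse inclusion $\mathcal{W}_{\mathcal{P}} \subseteq \mathcal{L}_{\mathcal{P}}$ rests on one geometric observation: every parameter ray $\mathcal{R}_t$ with $t \in \left(\bigcup_j \mathcal{A}_j\right) \setminus \{t^-,t^+\}$ avoids the open wake, because its angle lies in the complementary arc $(t^+,t^-)$, parameter rays are pairwise disjoint, and $\partial \mathcal{W}_{\mathcal{P}} \subseteq \mathcal{R}_{t^-} \cup \mathcal{R}_{t^+} \cup \{c_0\}$. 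Consequently, if $n$ is the ray period and $F_n$ the finite (Lemma~\ref{l:parcount}) set of parabolic parameters of ray period $n$, then $\mathcal{W}_{\mathcal{P}} \setminus F_n$ is a single connected component $U$ of $W := \mathbb{C} \setminus \left(\bigcup_{t \in \bigcup_j \mathcal{A}_j} \mathcal{R}_t \cup F_n\right)$: it is connected, being an open connected set minus finitely many points, and it is maximal in $W$ since enlarging it would require crossing $\mathcal{R}_{t^\pm}$ or $c_0$. This $U$ is precisely the component denoted $U_1$ in the proof of Theorem~\ref{t:p-wake}, which contains exterior realizers, so the all-or-nothing dichotomy of Lemma~\ref{l:preserving-portrait1} gives that $\mathcal{P}$ is realized by a repelling orbit throughout $U = \mathcal{W}_{\mathcal{P}} \setminus F_n$; the same reasoning confines any other realizing component of $W$ to the wake, completing $\mathcal{L}_{\mathcal{P}} \subseteq \mathcal{W}_{\mathcal{P}}$.

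The hard part, which I expect to be the main obstacle, is the finitely many parabolic parameters of $F_n$ lying inside the open wake: at such a $c_*$ I must show that $\mathcal{R}_{t^-}^{c_*}$ and $\mathcal{R}_{t^+}^{c_*}$ co-land at a \emph{repelling} point rather than at the parabolic orbit, for otherwise $c_*$ would lie in $\mathcal{W}_{\mathcal{P}} \setminus \mathcal{L}_{\mathcal{P}}$. This is where the monodromy argument enters. The common landing point defines a single-valued holomorphic function $z \colon U \to \mathbb{C}$ of repelling periodic points, and $c_*$ is an isolated puncture of $U$, so $z$ has no branch point there. If at $c_*$ the $z$-orbit collided with another periodic orbit (multiplier $1$, the primitive case), the local saddle--node normal form would force $z$ to be a square--root branch, contradicting single-valuedness. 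A degeneration to a multiplier that is a nontrivial root of unity is likewise excluded: by the equal-period property of co-landing rays and the maximality of $\mathcal{P}$ (Corollary~\ref{maximal}) the portrait at such a $c_*$ would again be exactly $\mathcal{P}$, so the $z$-orbit would be neutral there and $c_*$ would lie on the boundary of the hyperbolic component on which this orbit is attracting --- a set meeting $\overline{\mathcal{W}_{\mathcal{P}}}$ only at the root $c_0$, contrary to $c_*$ being interior to the wake. Hence $z$ extends holomorphically across $c_*$ to a simple periodic point, necessarily repelling since a ray landing point is never attracting, so $c_* \in \mathcal{L}_{\mathcal{P}}$. Combining the two inclusions yields $\mathcal{L}_{\mathcal{P}} = \mathcal{W}_{\mathcal{P}}$, as claimed.
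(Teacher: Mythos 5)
Your overall architecture coincides with the paper's: exterior parameters are handled via Theorem~\ref{realization} and the remark following it, the set $\mathcal{W}_{\mathcal{P}}\setminus F_n$ and the complementary components of the ray partition are handled by the all-or-nothing dichotomy of Lemma~\ref{l:preserving-portrait1}, and the finitely many parabolic parameters inside the open wake are handled by holomorphic continuation of the common landing point. The first two parts of your argument are sound, and the reduction to the single criterion ``the characteristic rays co-land at a repelling point'' via Corollary~\ref{Lem:Characteristic angles} is a clean way to organize them.

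The gap is in the treatment of $c_\ast\in F_n\cap\mathcal{W}_{\mathcal{P}}$. Your exclusion of a degeneration to a nontrivial root of unity rests on the claim that the boundary of the hyperbolic component on which the orbit becomes attracting meets $\overline{\mathcal{W}_{\mathcal{P}}}$ only at the root $c_0$. Nothing established up to this point in the paper gives that: locating hyperbolic components relative to wakes is exactly the kind of statement the Structure Theorem is building towards, so this step is circular. You also do not address the possibility that the limiting multiplier is indifferent without being a root of unity. Both problems disappear if you streamline the argument you already have in hand: since $z$ is single-valued, holomorphic and bounded on a punctured disk around $c_\ast$, Riemann's removable singularity theorem extends it holomorphically with no case analysis at all; then the multiplier $\lambda(c)=\bigl(f_c^{\circ p}\bigr)'\bigl(z(c)\bigr)$ (with $p$ the orbit period of $\mathcal{P}$) is holomorphic on the full disk with $\vert\lambda\vert>1$ off the puncture, and the maximum principle applied to $1/\lambda$ forces $\vert\lambda(c_\ast)\vert>1$. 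This excludes the saddle-node, root-of-unity and irrationally indifferent degenerations simultaneously and shows that $z(c_\ast)$ is repelling. Finally, you still owe an argument that $\mathcal{R}_{t^-}^{c_\ast}$ and $\mathcal{R}_{t^+}^{c_\ast}$ actually land at $z(c_\ast)$ for the parameter $c_\ast$ itself (a priori they could land at the parabolic orbit of $f_{c_\ast}$); this follows from continuity of landing points (Theorem~\ref{t:cpl}) or from uniqueness of landing points combined with \cite[Lemma B.1]{GM1}, and is the step that the paper's own sketch likewise compresses into ``it follows from holomorphicity''.
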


\begin{proof}
The proof is similar to \cite[Theorem 3.1]{M2a}, so we only give a sketch omitting the details. 

We use the notations of the previous theorem. If $c$ belongs to $\mathcal{W}_{\mathcal{P}}\setminus \mathcal{M}_d$, then $t(c)\in (t^-,t^+)$. By the proof of Theorem \ref{realization}, such an $f_c$ has a repelling periodic orbit with associated orbit portrait $\mathcal{P}$. Since $\mathcal{W}_{\mathcal{P}}\setminus F_n$ is disjoint from all parabolic parameters of ray period $n$ and from all parameter rays at angles in $A_1\cup\cdots\cup A_{p}$, it follows from Lemma \ref{l:preserving-portrait1} that every $c$ in $\mathcal{W}_{\mathcal{P}}\setminus F_n$ has a repelling cycle admitting the portrait $\mathcal{P}$. By the same reasoning (using the fact that for some $c$ not in $\mathcal{M}_d$, $f_c$ can admit the orbit portrait $\mathcal{P}$ only if $t(c) \in (t^- , t^+)$), no parameter in $\mathbb{C}\setminus (\mathcal{W}_{\mathcal{P}}\cup F_n)$ has a repelling cycle admitting the portrait $\mathcal{P}$. If for some $c_0$ in $F_n\setminus \mathcal{W}_{\mathcal{P}}$, $f_{c_0}$ has a repelling cycle with associated portrait $\mathcal{P}$, then by Lemma \ref{l:preserving-portrait1}, each nearby map $f_c$ would have a repelling cycle with associated portrait $\mathcal{P}$. But this contradicts the fact that $F_n$ is finite and no parameter in $\mathbb{C}\setminus (\mathcal{W}_{\mathcal{P}}\cup F_n)$ has a repelling cycle admitting the portrait $\mathcal{P}$. The treatment of the parabolic parameters $F_n\cap \mathcal{W}_{\mathcal{P}}$ is slightly more subtle: one can show that the landing points of the dynamical rays $\mathcal{R}_{t^-}^c$ and $\mathcal{R}_{t^+}^c$ depend holomorphically throughout $\mathcal{W}_{\mathcal{P}}$; since these two rays land at a common repelling periodic point for each $c$ in $\mathcal{W}_{\mathcal{P}}\setminus F_n$, it follows from holomorphicity that $\mathcal{R}_{t^-}^c$ and $\mathcal{R}_{t^+}^c$ land at a common repelling periodic point for any $c$ in $F_n\cap \mathcal{W}_{\mathcal{P}}$. Now it follows from the proof of Theorem \ref{realization} that for every $c$ in $F_n\cap \mathcal{W}_{\mathcal{P}}$, $f_c$ has a repelling cycle admitting the portrait $\mathcal{P}$.
\end{proof}

\section{Parameter and Dynamical Rays at the Same Angle}\label{s:pardyn}

\begin{definition}[Characteristic Point of a Parabolic Orbit]
Let $\mathcal{O}$ be a parabolic orbit for some $f_c$. The unique point on this orbit which lies on the boundary of the bounded Fatou component containing the critical value is defined as the \emph{Characteristic point of} $\mathcal{O}$. 

It is easy to see that if $\mathcal{O}$ has a non-trivial orbit portrait, this definition coincides with the one in \ref{CharPoint}.
\end{definition}

Consider the \emph{parameter} ray $\mathcal{R}_\theta$ at a periodic angle $\theta$ with
landing point $c$. We show in this section that the corresponding 
\emph{dynamical} ray $\mathcal{R}_\theta^c$ lands at the characteristic point of
the parabolic orbit (Theorem~\ref{t:perchar}). This proves the
Structure Theorem \ref{t:struct} in the primitive case (Corollary \ref{c:nonesspar})
and ``half'' of the Structure Theorem in the non-primitive case
(Lemma \ref{At least two rays}).

The main tool is the concept of orbit separation
(Subsection \ref{ss:osl}), which in turn is based on Hubbard trees as introduced in
the Orsay Notes \cite{orsay-notes}

\subsection{Hubbard Trees}\label{Hubbard}
Recall the following fact for a parameter $c$ with super-attracting
orbit: for every Fatou component $U$ of $K_c$ there is a unique periodic
or pre-periodic point $z_U$ of the super-attracting orbit and a Riemann
map $\phi_U \colon U\to \mathbb{D}$ with $\phi_U(z_U) = 0$ that extends to a homeomorphism
from $\overline U$ onto $\overline{\mathbb{D}}$. Such a map $\phi_U$ is unique except for
rotation around $0$. The point $z_U$ is called the \emph{center of $U$} and for
any $\theta\in \mathbb{R}/\mathbb{Z}$ the pre-image $\{ \phi_U^{-1}(r^{2\pi
i\theta}) : r \in \left[0,1\right)\}$ is an \emph{internal ray of $U$} with
well-defined landing point. Moreover, since hyperbolic filled-in Julia
sets are connected and locally connected, hence arcwise
connected \cite[Lemmas~17.17 and 17.18]{M1new}, every two different points
$z,z^{\prime} \in K_c$ are connected by an arc in $K_c$ with endpoints $z$ and $z^{\prime}$ (an
arc is an injective path). 

\begin{definition}[Regular Arc]
Let $c \in \mathcal{M}_d$ be a parameter with super-attracting orbit and $z,z^{\prime} \in K_c$.
A closed arc $\left[z,z^{\prime}\right]$ in $K_c$ is a
\emph{regular arc} if
\begin{itemize}
\item $\left[z,z^{\prime}\right]$ has the endpoints $z$ and $z^{\prime}$,
\item for every Fatou component $U$ of $f_c$, the
intersection $\left[z,z^{\prime}\right]\cap\overline U$ is contained in the union of
at most two internal rays of $U$ together with their landing points.
\end{itemize}
We do not distinguish regular arcs which differ by re-parametrization.
\end{definition}

\begin{lemma}[Regular Arcs]
\label{Lem:RegArcs}
Let $c$ be a parameter with a super-attracting orbit. Then any two 
points $z,z^{\prime}\in K_c$ are connected by a unique regular arc in $K_c$.
\end{lemma}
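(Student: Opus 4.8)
The plan is to prove existence and uniqueness of the regular arc separately. For \emph{existence}, I would start from an arbitrary arc in $K_c$ joining $z$ and $z^\prime$ — one exists because $K_c$ is arcwise connected (as recalled just before the lemma, hyperbolic filled-in Julia sets are connected and locally connected, hence arcwise connected by Lemmas~17.17 and~17.18 of \cite{M1new}) — and then ``regularize'' it. The point is that an arbitrary arc may enter a Fatou component $U$ and wander around inside it, but since $\overline U$ is homeomorphic to $\overline{\mathbb{D}}$ via $\phi_U$, the portion of the arc inside $\overline U$ can be replaced by the union of at most two internal rays of $U$ together with their landing points: if the arc enters and leaves $\overline U$ through boundary points $w_1,w_2 \in \partial U$, then (under $\phi_U$) we replace whatever happens in between by the two radii from $\phi_U(w_1)$ and $\phi_U(w_2)$ to $0$ (a single radius if $w_1 = w_2$, i.e.\ if the arc only touches $\partial U$). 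Carrying this out component by component and checking that the resulting subset is still an embedded arc (rather than merely a connected set) is the routine but slightly delicate part.

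For \emph{uniqueness}, I would argue that two regular arcs $\gamma_1,\gamma_2$ with the same endpoints must coincide. The natural approach is topological: $K_c$ for a super-attracting parameter is a dendrite-like object (more precisely, collapsing the closure of each bounded Fatou component to a point yields a dendrite, since $K_c$ is connected, locally connected, and full with no interior obstruction to tree structure coming from the Julia set part). In a space that is ``tree-like'' in this sense, arcs between two points are essentially unique up to how they traverse the Fatou disks, and the regularity condition pins down exactly that ambiguity by forcing the passage through each $\overline U$ to be along internal rays through the center. I would formalize this by supposing $\gamma_1 \neq \gamma_2$ and extracting a simple closed curve from $\gamma_1 \cup \gamma_2$; this loop would have to bound a region, and since $K_c$ has empty interior outside the Fatou components, the loop would have to run through a Fatou component in two genuinely different ways, contradicting that inside each $\overline U$ both arcs are constrained to pass through the center $z_U$ along at most two internal rays.

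The main obstacle I anticipate is the uniqueness argument, specifically making rigorous the claim that $K_c$ admits no nontrivial loops once the Fatou components are accounted for. The cleanest route is probably to pass to the quotient space $\widehat K_c$ obtained by collapsing each $\overline U$ to its center $z_U$; one shows $\widehat K_c$ is a dendrite (a compact, connected, locally connected space containing no simple closed curve), so arcs in $\widehat K_c$ between two points are genuinely unique. A regular arc projects to an arc in $\widehat K_c$, and the regularity condition guarantees that an arc in $\widehat K_c$ lifts back uniquely to a regular arc in $K_c$ (the ``at most two internal rays through the center'' clause removes exactly the lifting ambiguity at each collapsed disk). Thus uniqueness downstairs in the dendrite yields uniqueness upstairs. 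I would therefore organize the proof as: (i) recall arcwise connectivity to get \emph{some} arc; (ii) regularize it inside each $\overline U$ to prove existence; (iii) establish that $\widehat K_c$ is a dendrite; and (iv) deduce uniqueness by projecting to $\widehat K_c$ and lifting. The dendrite property in step (iii) is where I expect to invoke local connectivity and the absence of interior points of $J(f_c)$ most heavily, and it is the step most likely to require care.
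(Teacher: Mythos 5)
Your existence argument is the paper's: take any arc in $K_c$ (arcwise connectivity from local connectivity) and replace its intersection with each $\overline U$ by at most two internal rays through the center; the paper is equally brief about verifying that the result is still an embedded arc. For uniqueness, the first argument you sketch and then set aside is in fact exactly the paper's proof, and it is the shorter route: if $\left[z,z'\right]\neq\left[z,z'\right]'$, then $\mathbb{C}\setminus\left(\left[z,z'\right]\cup\left[z,z'\right]'\right)$ has a bounded component $V$ whose boundary is a simple closed curve made of pieces of the two arcs; since $\mathbb{C}\setminus K_c$ is connected and unbounded, $V\subset K_c$, and being open and connected it lies in a single bounded Fatou component $U$; but then $\partial V$ would be a Jordan curve contained in finitely many internal rays of $U$ together with their landing points, which is impossible. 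No dendrite, quotient space, or Moore-type theorem is needed.

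The quotient route you prefer for step (iii)--(iv) is workable in principle but has a concrete gap as stated: the closures $\overline U$ of distinct bounded Fatou components are in general \emph{not} pairwise disjoint (adjacent components touch at repelling periodic or pre-periodic points, as in the basilica-type configurations that occur here), so ``collapse each $\overline U$ to a point'' is not a quotient by a decomposition into disjoint closed sets. To repair it you must either collapse the open components and invoke an upper semicontinuous decomposition theorem, or merge chains of touching closures; in the latter case the ``unique lifting'' of an arc in $\widehat K_c$ back to a regular arc requires knowing that two Fatou component closures meet in at most one point --- and the natural proof of that fact is precisely the Jordan-curve-bounding-a-region argument above. So the dendrite detour either imports heavier machinery or quietly relies on the same core input as the paper's direct argument; I would follow the paper and run the Jordan curve argument directly on $\left[z,z'\right]\cup\left[z,z'\right]'$.
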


\begin{proof}
For the existence of a regular arc $\left[z,z^{\prime}\right]$, take any arc in $K_c$ connecting
$z$ to $z^{\prime}$. For any bounded Fatou component $U$ of $K_c$, it suffices to assume
that $\left[z,z^{\prime}\right] \cap \overline{U}$ is connected (in fact, this is automatic). It is then
easy to modify the arc within $U$ so as to satisfy the conditions on regular arcs. 

For uniqueness, assume that there are two different regular arcs $\left[z,z^{\prime}\right]$
and $\left[z,z^{\prime}\right]^{\prime}$ for different points $z,z'\in K_c$. If
$[\,z,z'\,]\neq[\,z,z'\,]'$, then
$\mathbb{C} \setminus \left(\left[z,z^{\prime}\right]\cup\left[z,z^{\prime}\right]^{\prime}\right)$ has a bounded component $V$ (say) and
$\partial V$ is a simple closed curve formed by parts of the two regular arcs.
Since the complement of $K_c$ is connected, $V\subset K_c$, so
$V$ is contained within one bounded Fatou component $U$ of $K_c$. But now $\partial
V$ must be contained in finitely many internal rays of $U$ together with their landing
points such that two distinct external rays land at the same point: an impossibility.
\end{proof}

\begin{definition}[Hubbard Tree]
For $c \in \mathcal{M}_d$ with super-attracting orbit $\mathcal{O}$, the \emph{Hubbard tree} of $f_c$ is defined as $\bigcup_{(z,z')\in \mathcal{O} \times \mathcal{O}}[\,z,z'\,]$.
\end{definition}

It follows from Lemma~\ref{Lem:RegArcs} that every super-attracting map $f_c$ has a
unique Hubbard tree. It is easy to show that this is indeed a finite tree in the topological
sense: it has finitely many branch points and no loops.

\begin{definition}[Branch Points and Endpoints]
Let $c$ be a parameter with a super-attracting orbit $\mathcal{O}$ and associated  Hubbard
tree $\Gamma$. For $z\in\Gamma$ the components of $\Gamma \setminus \{ z \}$ are called \emph{branches of $\Gamma$ at $z$}.

If the number of branches with respect to $z$ is at least three then~$z$ is called a
\emph{branch point} of~$\Gamma$. If the number is one, then $z$ is called an
\emph{endpoint} of $\Gamma$.
\end{definition}

\begin{figure}[!ht]\label{Hubbard tree}
\begin{center}
\includegraphics[scale=0.25]{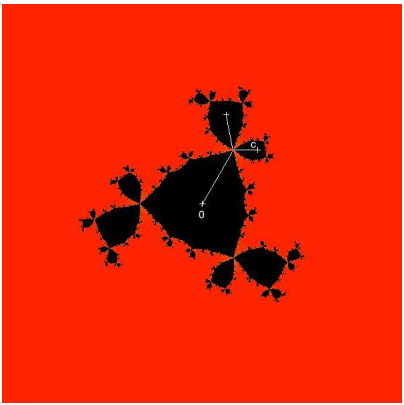}
\end{center}
\caption{The Julia set of a map $z \mapsto z^3 + c$ with a 3-periodic super-attracting
orbit. The points on the critical orbit are connected by the Hubbard tree. The tree has a branch point and the critical value is an end-point of the tree.}
\end{figure}

\begin{lemma}[The Hubbard Tree]\label{l:tree2} 
Let $c\neq 0$ be a parameter with a super-attracting orbit $\mathcal{O}$ and associated Hubbard
tree $\Gamma$. Then $\Gamma$ intersects the boundary of the Fatou component containing the critical value in exactly one point, which is periodic and
the boundary of any other bounded Fatou
component in most $d$ points which are periodic or
pre-periodic. In particular, the critical value is an endpoint of the Hubbard tree.
\end{lemma}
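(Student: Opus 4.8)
The claim is about the Hubbard tree $\Gamma$ of a super-attracting map $f_c$ with $c\neq 0$. I need to establish three things: (i) $\Gamma$ meets the boundary of the critical-value Fatou component in exactly one point, which is periodic; (ii) $\Gamma$ meets the boundary of any other bounded Fatou component in at most $d$ points, each periodic or pre-periodic; and (iii) as a consequence, the critical value is an endpoint of $\Gamma$. The natural approach is to exploit the combinatorics of internal rays landing on $\partial U$ for a Fatou component $U$, using the structure of regular arcs from Lemma \ref{Lem:RegArcs}.

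\textbf{The key structural observation.} The plan is to use the fact that $f_c$ restricted to each bounded Fatou component $U$ is a proper map onto its image $f_c(U)$. For the critical Fatou component $U_0$ (the one containing the critical point $0$), the map $f_c\colon U_0 \to U_1$ (where $U_1$ contains the critical value $c$) has degree $d$, while for every other bounded Fatou component the restriction is a conformal isomorphism. I would first analyze how $\Gamma\cap \overline U$ behaves for a component $U$ by pushing forward under $f_c$: since $\Gamma$ is forward-invariant (its vertices are the super-attracting orbit and regular arcs are preserved up to the degree-$d$ branching at $U_0$), the intersection points of $\Gamma$ with $\partial U$ map to intersection points of $\Gamma$ with $\partial f_c(U)$. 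For a univalently-mapped $U$ this is a bijection on boundary intersection points; for $U_0$ this is $d$-to-$1$, so $\partial U_1$ meeting $\Gamma$ in one point pulls back to $\partial U_0$ meeting $\Gamma$ in $d$ points, each mapping to that single point. This is precisely where the bound ``at most $d$'' and the ``exactly one'' for $U_1$ should come from.

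\textbf{Carrying it out.} First I would show $\Gamma\cap \partial U_1$ is a single point: because $c$ is an endpoint statement we are trying to prove, I argue dynamically. The critical value $c=f_c(0)$ and $0$ is the center of $U_0$; the point $z_{U_1}=c$ is the center of $U_1$. Any regular arc in $\Gamma$ entering $\overline{U_1}$ does so along internal rays landing on $\partial U_1$, and I claim only one such landing point can be a vertex-connection point. The cleanest route is to use that $\Gamma$ is the union of regular arcs between orbit points, each of which meets $\overline U$ in at most two internal rays (by the definition of regular arc), and then count how many distinct boundary points can arise. I would establish periodicity/pre-periodicity of the intersection points by noting that each such point lies on the forward orbit of the intersection point of $\partial U_1$ with $\Gamma$ (using forward invariance of $\Gamma$ and the landing behavior of internal rays under $f_c$), and boundary points of Fatou components on the critical orbit that carry the tree must be (pre)periodic since the whole configuration is finite and forward-invariant. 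For (iii), once $\Gamma\cap\partial U_1$ is a single point $z^\ast$, the component $U_1$ meets $\Gamma$ only in internal rays landing at $z^\ast$; since the center $c=z_{U_1}$ lies in the interior and $\Gamma$ restricted to $\overline{U_1}$ is a single internal ray to $z^\ast$, the critical value $c$ can be reached from only one direction, forcing $c$ to be a leaf, i.e.\ an endpoint of $\Gamma$.

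\textbf{The main obstacle.} The hard part will be rigorously proving that $\partial U_1$ meets $\Gamma$ in \emph{exactly one} point rather than merely at most one — equivalently, showing the critical value is genuinely a single-branch endpoint and not a branch point or a doubly-entered boundary point. I expect this to require a careful argument that two distinct internal rays of $U_1$ cannot both serve as connections to other orbit points within a regular arc, which in turn rests on the uniqueness of regular arcs (Lemma \ref{Lem:RegArcs}) combined with the degree-$d$ branching at $U_0$: if two orbit points were separated by $\overline{U_1}$ via two distinct boundary points, pulling back through the $d$-to-$1$ map at $U_0$ would force a configuration contradicting the tree structure (no loops, finitely many branches). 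Controlling the $d$-fold pullback and verifying it yields exactly the bound $d$ (and exactly $1$ for the critical value component) without over- or under-counting is the delicate combinatorial core of the argument.
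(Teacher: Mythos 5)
Your setup---forward invariance of $\Gamma$, so that points of $\Gamma\cap\partial U_l$ push forward to points of $\Gamma\cap\partial U_{l+1}$, injectively for $l\neq 0$ and at most $d$-to-one for $l=0$---is the same as the paper's, and your deductions of (pre)periodicity and of ``one boundary point on $\partial U_1$ implies $c$ is an endpoint'' are fine in outline. But the step you yourself flag as ``the main obstacle,'' namely that $\Gamma\cap\partial U_1$ is a \emph{single} point, is precisely the step you never prove, and the route you sketch for it does not close. Writing $a_l$ for the number of points of $\Gamma\cap\partial U_l$ along the cycle $U_0\to U_1\to\cdots\to U_{n-1}\to U_0$, your counting yields only $a_0/d\le a_1\le a_2\le\cdots\le a_{n-1}\le a_0$, and these inequalities are perfectly consistent with $a_1=2$ (indeed with any $a_1\le a_0\le d\,a_1$). ``Pulling back through the $d$-to-one map at $U_0$'' produces neither a loop nor too many branches by itself, so no contradiction with the tree structure arises. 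The delicate combinatorial core you defer is therefore genuinely missing.

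The paper closes it with one additional idea that your proposal lacks: a finite tree has at least two endpoints, and every endpoint of $\Gamma=\bigcup_{(z,z')\in\mathcal{O}\times\mathcal{O}}[\,z,z'\,]$ must be a point of the critical orbit, since any other point lies in the interior of some regular arc and hence has at least two branches. Consequently some $c_{l^\ast}=f_c^{\circ l^\ast}(0)$ with $l^\ast\in\{1,\dots,n-1\}$ is an endpoint of $\Gamma$; if $a_{l^\ast}>1$, the center $c_{l^\ast}$ of $U_{l^\ast}$ could be joined to the rest of the orbit through two distinct boundary points of $U_{l^\ast}$, i.e.\ by two distinct regular arcs, contradicting Lemma \ref{Lem:RegArcs} (equivalently, contradicting that $c_{l^\ast}$ is an endpoint). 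Thus $a_{l^\ast}=1$, and the monotone chain above then forces $a_1=1$ and $a_l\le a_0\le d$ for every periodic component; strictly pre-periodic components are handled by univalent forward iteration onto the cycle. You should incorporate this ``endpoint of the tree'' argument: without it your text is an accurate plan, with the correct invariance and counting scaffolding, but not a proof.
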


\begin{proof} 
Let $U_0$ be the Fatou component containing the critical point
and $U_1,\cdots,U_{n-1}$ the other bounded periodic components such that $f \left(U_i\right) = U_{i+1}$ with $U_n:=U_0$.

We denote the number of points of intersection of $\Gamma$
and $\partial U_i$ by $a_i$. For two different points $z,z'\in \mathcal{O}$ we consider their
regular arc $[\,z,z'\,]$ and pick $z^\ast \in \partial U_l\cap[\,z,z'\,]$.
Since $\Gamma$ is unique and invariant, $f_c(z^\ast)\in\partial U_{l+1}\cap\Gamma$.
This means that the set of intersection points of $\Gamma$ with the boundary of
periodic Fatou components is forward invariant. Since $f_c \colon \overline{U_l} \to \overline{U_{l+1}}$ is a one-to-one map for $l\in \{ 1,2,\ldots,n-1 \}$ and
a $d$-to-one map for $l=0$, we have $0 < a_0/d \leq a_1\leq a_2\leq \cdots \leq a_{n-1}\leq
a_0$.

We show that there is an $l^\ast\in \{ 1,2,\cdots,n-1 \}$
with $a_{l^\ast}=1$. Indeed, let $c_l:=f_c^{\circ l}(0)$ be an endpoint of
$\Gamma$. If $a_l>1$, it's easy to check that there are at least two ways to connect $c_l$ to the other points on the critical orbit by regular arcs: a contradiction. This along with the previous inequality yields the result.


\end{proof}

\begin{lemma}[Properties of Branch Points]
\label{l:tree3}
Let $c$ be a parameter with a super-attracting orbit and associated Hubbard
tree $\Gamma$. Consider a $z\in\Gamma$ such that $\Gamma$ has $m$
branches at $z$. Then:
\begin{itemize}
\item 
If $z \neq 0$ then $\Gamma$ has at least $m$ branches at $f_c(z)$.
\item 
If $z=0$ then $\Gamma$ has exactly one branch at $f_c(z)=c$.
\item 
If $z$ is a branch point then it is periodic or pre-periodic
and lies on a repelling or the super-attracting orbit.
\item
Every point on the critical orbit has at least one and at most $d$ branches.
\end{itemize}
\end{lemma}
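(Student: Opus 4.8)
The plan is to derive all four statements from a single structural fact---that the branch count cannot decrease under $f_c$ away from the critical point---together with finiteness of the tree. First I would record that $\Gamma$ is forward invariant, i.e.\ $f_c(\Gamma)\subseteq\Gamma$. This holds because the only critical point $0$ already lies on $\mathcal{O}\subseteq\Gamma$: the image of a regular arc joining two points of $\mathcal{O}$ is a connected subset of $K_c$ joining two points of $\mathcal{O}$ (possibly folded at $0$), and since $f_c$ carries internal rays to internal rays and Julia-set pieces to Julia-set pieces, this image is again a union of regular arcs between points of $\mathcal{O}$, hence lies in $\Gamma$ by uniqueness of regular arcs (Lemma~\ref{Lem:RegArcs}). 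We may assume $c\neq 0$, as otherwise $\Gamma=\{0\}$ and there is nothing to prove.

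For the first assertion, fix $z\neq 0$. Since $0$ is the unique critical point of $f_c$, the map is an orientation-preserving local homeomorphism on some neighborhood $N$ of $z$, which I would shrink so that $\Gamma\cap N$ consists of exactly $m$ arcs issuing from $z$ (one for each branch) and $f_c|_N$ is injective. The images of these arcs are $m$ arcs issuing from $f_c(z)$, pairwise disjoint apart from their common endpoint and contained in $\Gamma$ by invariance; as $\Gamma$ is a tree (no loops) these lie in distinct components of $\Gamma\setminus\{f_c(z)\}$. Hence $\Gamma$ has at least $m$ branches at $f_c(z)$, possibly more if other preimages of $f_c(z)$ contribute further branches.

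The second assertion is precisely that the critical value $c=f_c(0)$ is an endpoint of $\Gamma$, which is the final conclusion of Lemma~\ref{l:tree2}, so I would simply quote it. The fourth then follows by running the branch count $b(\cdot)$ around the critical cycle $0=z_0\mapsto z_1=c\mapsto\cdots\mapsto z_{p-1}\mapsto z_0$. None of $z_1,\dots,z_{p-1}$ is critical, so the first assertion gives $1=b(z_1)\le b(z_2)\le\cdots\le b(z_{p-1})\le b(z_0)$; it remains to bound $b(z_0)=m$. The branches at the center $0$ of its Fatou component $U_0$ are distinct internal rays, and $f_c\colon U_0\to U_1$ is conformally conjugate to $w\mapsto w^d$, so it multiplies their internal angles by $d$. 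Since $c$ is an endpoint, it carries a single internal ray in $\Gamma$, so all $m$ image angles coincide; as the fibers of multiplication by $d$ have exactly $d$ elements, the $m$ distinct source angles satisfy $m\le d$. Every point of the critical orbit therefore has at least one branch (automatic in the nondegenerate tree) and at most $d$.

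For the third assertion I would combine monotonicity with finiteness: a finite tree has finitely many branch points. Let $z$ be a branch point, so $b(z)\ge 3$. If the forward orbit of $z$ meets $0$, then $z$ maps into $\mathcal{O}$ and is pre-periodic. Otherwise the orbit avoids $0$, so by the first assertion $b(f_c^{\circ n}(z))\ge 3$ for every $n$; the whole orbit consists of branch points, and finiteness forces it to be eventually periodic. The resulting cycle is disjoint from $\mathcal{O}$, hence lies in $J(f_c)$---a periodic point inside a bounded Fatou component can only be the center, which belongs to $\mathcal{O}$---and since the hyperbolic map $f_c$ has no neutral cycles, it is repelling. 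Thus $z$ is periodic or pre-periodic and its orbit lands on a repelling cycle or on the super-attracting orbit. The step I expect to be most delicate is the local analysis at the critical point underlying the fourth assertion---verifying that the branches at $0$ really are internal rays whose angles are multiplied by $d$, and that forward invariance places all the relevant image arcs inside $\Gamma$---while the remaining arguments are bookkeeping with the monotonicity inequality.
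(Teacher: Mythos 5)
Your argument is correct and follows essentially the same route as the paper, which compresses the same ideas into a few lines: forward invariance of $\Gamma$ for the first item, Lemma~\ref{l:tree2} for the second and fourth, and finiteness of branch points together with hyperbolicity for the third. The only real divergence is your bound $m\le d$ at the critical point, which you re-derive from the degree-$d$ action on internal angles and then propagate around the cycle by monotonicity, whereas the paper reads it off directly from the count of boundary intersection points in Lemma~\ref{l:tree2}; both work, though your justification of forward invariance via images of possibly folded arcs is a little informal (the cleaner argument is that $f_c^{-1}(\Gamma)$ is a regular-arc-connected tree containing $\mathcal{O}$, hence contains $\Gamma$).
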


\begin{proof}
The first statement follows from forward invariance of the Hubbard tree, and the
second was shown in Lemma \ref{l:tree2}. 

This implies the third statement because $\Gamma$ has
only finitely many branch points, and all periodic orbits other than the unique
super-attracting one are repelling.

The last claim is a restatement of Lemma \ref{l:tree2}.
\end{proof}

\subsection{Hyperbolic Components}

\begin{definition}[Roots, Co-Roots and Centers]
\label{Def:RootsAndCenters}
A \emph{hyperbolic component $H$ with period $n$} of $\mathcal{M}_d$ is a connected
component of $\{ c \in \mathcal{M}_d : f_c$ has an attracting orbit with exact
period $n \}$.
 
A \emph{root} of $H$ is a parameter on $\partial H$ with an essential parabolic orbit of exact ray period $n$ (so that the parabolic orbit disconnects the Julia set). Similarly a \emph{co-root} of $H$ is a parameter on $\partial H$ with a non-essential parabolic
orbit of exact ray period $n$ (so that the parabolic orbit does not disconnect the Julia set). A \emph{center} of $H$ is a parameter in $H$ which has a super-attracting orbit of exact
period $n$.
\end{definition}

Since our maps $f_c$ have only one critical point, we can see exactly as in the quadratic case \cite[Chapter VIII, Theorem 1.4]{CG1} that every hyperbolic component $H$ with period $n$ is a connected component of the interior of $\mathcal{M}_d$ and that there is a (non-unique) holomorphic map $z \colon H \to \mathbb{C}$ such that $z(c)$ is attracting for all $c\in H$ and has exact orbit period $n$.

Moreover, there is a holomorphic \emph{multiplier map} $\mu \colon H \to \mathbb{D}$ so
that the unique attracting orbit of $f_c$ with $c \in H$ has multiplier $\mu(c)$.
Since $\vert \mu(c) \vert \to 1$ as $c \to \partial H$, the multiplier map $\mu \colon H \to \mathbb{D}$
is a proper holomorphic map, hence extends surjectively from $\overline{H}$ to $\overline{\mathbb{D}}$ . It follows that every hyperbolic
component has (at least) one center and at least one root or co-root (in fact,
exactly one center and one root when the period of the hyperbolic component is different from one); see Theorems \ref{t:hmd} and Corollary \ref{Unique center}.
Moreover, we will see in Corollary \ref{c:par+hyp} that the number of co-roots of a
hyperbolic component of period different from one is exactly $d-2$. In particular in the quadratic
case there are no co-roots. For the hyperbolic component of period one, there are $d-1$ co-roots and no root.
\bigskip

\begin{lemma}[In the Neighborhood of Parabolic Parameters]
\label{l:rph}
Let $c_0$ be a parabolic parameter with exact parabolic orbit
period $k$, exact ray period $n$ and let $z_0$ be a point of the
parabolic orbit. Then:

\begin{itemize}
\item If the parabolic orbit portrait is non-primitive ($k\neq n$), then $c_0$ lies
on the boundary of hyperbolic components with period $k$
and $n$. Moreover, there exists a neighborhood $U$ of $c_0$ and a holomorphic
function $z_1 \colon U \to \mathbb{C}$ such that $z_1(c)$ is a point of
exact period $k$ and $z_1(c_0)=z_0$, and with the following property: for
every $c\in U \setminus \{ c_0 \}$ there is an orbit $\mathcal{O}(c)$ with exact period $n$ 
that merges into the parabolic orbit $\mathcal{O}(c_0)$ as $c \to c_0$ and for which the
multiplier map $c \mapsto \lambda \left( c,\mathcal{O}(c) \right)$ is holomorphic on $U$.

\item If the parabolic orbit portrait is primitive ($k=n$) then $c_0$ is a
root or co-root of a hyperbolic component with
period $n$. Furthermore, there are a two-sheeted cover $\pi \colon 
U'\to U$ of a neighborhood $U$ of $c_0$ with the only ramification
point $\pi(c'_0)=c_0$ and a holomorphic function $z \colon U' \to \mathbb{C}$
such that $z(c')$ is a point of exact period $n$ and $z(c'_0)=z_0$.
\end{itemize}
\end{lemma}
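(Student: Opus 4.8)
The plan is to reduce everything to a local study of the fixed-point equation of a suitable iterate near the parabolic point $z_0$, combined with the Weierstrass preparation theorem and the fact that unicriticality forces exactly one cycle of attracting petals. Write $\lambda$ for the multiplier of the parabolic orbit viewed as a cycle of exact period $k$; it is a primitive $q$-th root of unity and the ray period is $n=kq$. Set $h_c:=f_c^{\circ k}$, so that $z_0$ is a fixed point of $h_{c_0}$ with $h_{c_0}'(z_0)=\lambda$. The basic structural input I would establish first is that, because $f_{c_0}$ has a single critical orbit, the immediate parabolic basin consists of exactly one cycle of petals; equivalently $h_{c_0}^{\circ q}=f_{c_0}^{\circ n}$ has the local form $h_{c_0}^{\circ q}(z)=z+a\,(z-z_0)^{q+1}+\cdots$ with $a\neq0$, so that $z\mapsto f_{c_0}^{\circ n}(z)-z$ vanishes to order exactly $q+1$ at $z_0$ (order $2$ in the primitive case $q=1$). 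I expect this petal count, resting on the Leau--Fatou flower theorem together with the requirement that each petal cycle absorbs a critical point, to be the key qualitative ingredient.

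In the non-primitive case ($q>1$, so $k\neq n$) I would proceed as follows. Since $\partial_z(h_c(z)-z)$ equals $\lambda-1\neq0$ at $(c_0,z_0)$, the Implicit Function Theorem furnishes a holomorphic $z_1\colon U\to\mathbb{C}$ with $z_1(c_0)=z_0$ and $h_c(z_1(c))=z_1(c)$; after shrinking $U$, continuity of the multiplier (which stays near the primitive $q$-th root of unity $\lambda$) keeps the exact period equal to $k$. Next I would apply Weierstrass preparation to $P(c,z):=f_c^{\circ n}(z)-z$ at $(c_0,z_0)$ to write $P=u\cdot W$ with $u$ a unit and $W$ a monic degree-$(q+1)$ polynomial in $(z-z_0)$ whose coefficients are holomorphic in $c$ and vanish at $c_0$. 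Factoring out the root $z_1(c)$ leaves a degree-$q$ factor $V(c,\cdot)$ whose $q$ roots $\zeta_1(c),\dots,\zeta_q(c)$ are the period-$n$ points near $z_0$; since $h_c'(z_0)\approx\lambda=e^{2\pi i p/q}$ with $\gcd(p,q)=1$, these roots are permuted in a single $q$-cycle by $h_c$, so they constitute one orbit $\mathcal{O}(c)$ of exact period $n$ that merges into $\mathcal{O}(c_0)$ as the coefficients of $V$ tend to $0$. The crucial point is that $\lambda(c,\mathcal{O}(c))=\prod_{i=1}^q h_c'(\zeta_i(c))$ is a symmetric function of the roots of $V$, hence a holomorphic function of its coefficients, so it extends holomorphically across $c_0$ with value $\lambda^q=1$. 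Finally, both multiplier maps $c\mapsto\lambda(c,z_1(c))$ and $c\mapsto\lambda(c,\mathcal{O}(c))$ are holomorphic, nonconstant, and take the boundary values $\lambda$ and $1$ at $c_0$; the open mapping theorem then produces nearby parameters with attracting orbits of periods $k$ and $n$, placing $c_0$ on the boundary of hyperbolic components of both periods. Nonconstancy I would extract from the fact that parabolic parameters of a fixed multiplier and ray period are isolated (Lemma~\ref{l:parcount}).

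In the primitive case ($q=1$, $k=n$, $\lambda=1$) the same preparation applied to $P(c,z)=f_c^{\circ n}(z)-z$, now vanishing to order exactly $2$ at $(c_0,z_0)$, yields $W(c,z)=(z-z_0)^2+b_1(c)(z-z_0)+b_0(c)$ with $b_i(c_0)=0$. I would then establish the transversality $\partial_cP(c_0,z_0)\neq0$, which forces the discriminant $D(c):=b_1(c)^2-4b_0(c)$ to vanish to order exactly $1$ at $c_0$; the two simple roots $z_0-\tfrac12 b_1\pm\tfrac12\sqrt{D}$ of $W$ then separate at square-root speed. Passing to the two-sheeted cover $\pi\colon U'\to U$ obtained by adjoining $\sqrt{D}$ (ramified only over $c_0$, since $D$ has a simple zero there) makes this square root single-valued, so one branch defines a holomorphic $z\colon U'\to\mathbb{C}$ with $z(c_0')=z_0$ taking period-$n$ values, as required. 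The two branches have holomorphic multipliers equal to $1$ at $c_0'$, and a local analysis shows one side is attracting, producing a period-$n$ hyperbolic component $H$ with $c_0\in\partial H$; by Definition~\ref{Def:RootsAndCenters} this makes $c_0$ a root or a co-root according to whether the parabolic orbit is essential or non-essential. The step I expect to be the main obstacle is precisely this transversality $\partial_cP(c_0,z_0)\neq0$ (equivalently, that the parabolic bifurcation is a nondegenerate fold), since it is what guarantees simple ramification of the cover rather than an unramified or higher-order degeneration; I would prove it by relating $\partial_c f_c^{\circ n}$ to the motion of the critical value together with the simplicity of the parabolic point.
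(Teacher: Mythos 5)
Your non-primitive case follows the paper's proof essentially step for step: implicit function theorem for the period-$k$ point, Leau--Fatou petal counting (via unicriticality) to pin down the local normal form and $q=n/k$, identification of the remaining $q$ roots as a single orbit of exact period $n$, holomorphy of the multiplier as a symmetric function of those roots, and the open mapping theorem for both periods. One small soft spot: "since $h_c'(z_0)\approx\lambda$ the roots are permuted in a single $q$-cycle" is not by itself an argument; the paper closes this by a Rouch\'e/Hurwitz count showing that for each proper divisor $l$ of $q$ the iterate $f_c^{\circ lk}$ has exactly one fixed point near $z_0$ (namely $z_1(c)$, because $\lambda^l\neq 1$), so no shorter cycles can form. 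Your Weierstrass-preparation setup already contains everything needed to run that count, so this is easily repaired.

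The genuine problem is in the primitive case. You make the conclusion rest on the transversality $\partial_c P(c_0,z_0)\neq 0$, you identify it as the main obstacle, and you do not prove it; the suggested route ("relating $\partial_c f_c^{\circ n}$ to the motion of the critical value together with the simplicity of the parabolic point") is not a proof and is not obviously completable by elementary means. But this claim is not needed for the lemma. The statement only asserts the existence of a two-sheeted cover $\pi\colon U'\to U$ ramified over $c_0$ and a holomorphic $z\colon U'\to\mathbb{C}$ picking out a period-$n$ point; it does not assert that the monodromy of the two roots is nontrivial. Since there are exactly two roots of your quadratic factor $W$, a small loop around $c_0$ either fixes them or swaps them, so after two loops they return to themselves; hence both roots lift to single-valued holomorphic functions on $U'\setminus\{c_0'\}$ and extend over $c_0'$ by boundedness (equivalently: $\sqrt{D}$ is single-valued on $U$ or on $U'$ according to the parity of $\operatorname{ord}_{c_0}D$, and in either case on $U'$). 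This is exactly the paper's monodromy argument, and it replaces your transversality step entirely. You do need $D\not\equiv 0$, which follows from Lemma~\ref{l:parcount} (otherwise every nearby parameter would carry a parabolic orbit of ray period dividing $n$); the same lemma supplies the nonconstancy of the multiplier maps that your open-mapping steps quietly assume, as you correctly note in the non-primitive case. With the transversality claim deleted and the monodromy observation inserted, your primitive case closes.
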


\begin{proof}
The proof for the quadratic case generalizes
directly to $d \geq 2$ (\cite[Lemmas 6.1 and 6.2]{M2a}
and \cite[Lemma 5.1]{S1a}). However, for completeness and because our
organization differs from the one in \cite{M2a} and \cite{S1a}, we include
the proof here.

\emph{The non-primitive case:} The multiplier of the parabolic orbit is a root of unity different from
$1$, so by the implicit function theorem there is a neighborhood $U$ of $c_0$ and a
holomorphic function~$z_1 \colon U \to \mathbb{C}$ such that $z_1(c)$ is a point with exact
orbit period $k$ and $z_1(c_0)=z_0$. This implies that the
multiplier $\lambda\bigl(c,z_1(c)\bigr)$ is a holomorphic function
in $c$ on $U$. By the Open Mapping Theorem
and $\vert\lambda\left(c_0 , z_1(c_0)\right)\vert = 1$ it follows that
every neighborhood of $c_0$ contains parameters $c$ such that $z_1(c)$
is attracting. Thus, $c_0$ lies on the boundary of a hyperbolic
component with period $k$.

We now show that it lies also on the boundary of a hyperbolic
component with period $n$: since $\lambda(f_{c_0}^{\circ n},z_0)=1$,
we obtain $f_{c_0}^{\circ
n}(z)=z+a(z-z_0)^{q+1}+O\left((z-z_0)^{q+2}\right)$ for an
integer $q \geq 1$, $a \in \mathbb{C}$, as the Taylor expansion of $f_{c_0}^{\circ
n}$ near $z_0$. It follows from the Leau-Fatou flower theorem
(\cite[Theorem~10.5]{M1new}) that $z_0$ has $q$ attracting petals and
that $f_{c_0}^{\circ n}$ is the first iterate of $f_{c_0}$ which fixes
them and the (at least $q$) dynamical rays landing at $z_0$. These rays are
permuted transitively by the first return map $f_{c_0}^{\circ k}$
(Lemma \ref{folklore}), and hence the $q$ attracting petals are also permuted
transitively.  Since $n$ is the least integer with $\lambda(f_{c_0}^{\circ
n},z_0)=1$, it follows that  $q=n/k$ and $\lambda(f_{c_0}^{\circ k},z_0)$ is an
exact $q$-th root of $1$.  It follows that there are
neighborhoods $U$ of $c_0$ and $V$ of $z_0$ such that for every $c\in U$,
$f_c^{\circ n}(z)$ has exactly $q+1$ fixed points in $V$, counted with
multiplicities. We are interested in the exact periods of these points with respect
to $f_c$ for $c\in U \setminus \{ c_0 \}$. By the above discussion exactly one of them has
exact period $k$ and no one has a lower period. Since $\lambda(f_{c_0}^{\circ
l\cdot k},z_0) \neq 1$ for $l=2,\ldots,q-1$, it follows that the iterates $f_c^{\circ
l\cdot k}$ have for $c\in U$ exactly one fixed point in $V$. 

Therefore, $q$ points in $V$ have exact period $n$, and these lie on a single orbit.
We thus have for every $c\in U \setminus \{ c_0 \}$ an orbit $\mathcal{O}(c)$ with exact period $n$
and well-defined multiplier such that $q$ points of $\mathcal{O}(c)$ each coalesce at one
point of the parabolic orbit $\mathcal{O}(c_0)$ as $c\to c_0$. The
multiplier $c \mapsto \lambda \left(c,\mathcal{O}(c)\right)$ defines a holomorphic
function on $U$ (One cannot, in general, follow the individual points of the period $n$ orbit holomorphically. What one can rather follow holomorphically are symmetric functions of the points on the periodic orbit; the multiplier is indeed a symmetric function of the periodic points and it extends holomorphically to $c$ by Riemann's removable singularity theorem). As before it follows by the Open Mapping Theorem
that $c_0$ lies on the boundary of a hyperbolic component with
period $n$.

\emph{The primitive case:} In the primitive case we have again $\lambda(f_{c_0}^{\circ
n},z_0)=1$ and therefore the Taylor expansion $f_{c_0}^{\circ
n}(z)=z+a(z-z_0)^{q+1}+ O\bigl((z-z_0)^{q+2}\bigr)$ near $z_0$ for an
integer $q \geq 1$, $ a \in \mathbb{C}$. Since each of the $q$ petals must absorb a
critical orbit and $f_{c_0}$ has only one, we see $q=1$ and the multiplicity of $z_0$ as a root of $f_{c_0}^{\circ
n}$ is exactly 2. Therefore, it splits into two simple fixed points of $f_{c_0}^{\circ
n}$ when $c_0$ is perturbed. These two fixed points have exact period $n$ under the original map $f_{c_0}$.

As $c$ traverses a small loop around $c_0$, these two fixed
points are interchanged. But they are at their original positions after two
loops around $c_0$. Hence, on a two-sheeted cover $U'$ (let the projection map be $\pi$ so that $\pi$ is branched only over $c_0$) of a neighborhood of $c_0$,
the two fixed points can be defined as the values of two holomorphic
functions $z_1(c'),z_2(c')$ with corresponding holomorphic multipliers $\lambda_1(c'),\lambda_2(c')$
(initially, the functions $z_i(c')$ are defined on a two-sheeted cover of a punctured neighborhood
of $c_0$, but the puncture can then be filled in). Since we have $\lambda_i(\pi^{-1}(c_0))=+1$, 
the Open Mapping Theorem implies that there is a parameter $c_1^{\prime} \in U'$ with $c_1 = \pi(c_1^{\prime}) \in \mathcal{M}_d$ such that $\vert \lambda_1(c_1^{\prime}) \vert < 1$. This implies that there is a point $c_1$ arbitrarily close to $c_0$ with an attracting periodic orbit of exact period $n$. Thus, $c_0$ lies on the boundary of a
hyperbolic component of period $n$. This proves the lemma.
\end{proof}

\subsection{Orbit Separation Lemmas} \label{ss:osl}
Now we establish a couple of orbit separation lemmas which will be useful in the sequel. They show the existence of pairs
of dynamical rays landing at a common repelling periodic or pre-periodic point which
separate certain points within the filled-in Julia set, in the following sense:
two points $z,z'\in K_c$ are \emph{separated} in the dynamical plane if there are
two dynamical rays $\mathcal{R}_\theta^{c}, \mathcal{R}_{\theta'}^{c}$ landing at a common repelling
point $z_0$ such that $z$ and $z'$ are in different components
of $\mathbb{C} \setminus \left(\mathcal{R}_\theta^{c}\cup \mathcal{R}_{\theta'}^{c}\cup \{ z_0 \} \right)$. It is convenient to call these two rays together with their landing point
the \emph{ray pair} at angles $(\theta,\theta')$. One useful feature is that such a co-landing
ray pair may be stable even small perturbation when the landing behavior of other dynamical rays is not.

\begin{lemma}[Orbit Separation in Super-attracting Case]
\label{Lem:OrbitSeparationSuperattr}

Suppose that $f_c$ has a super-attractive orbit of some period $n$. Then for
every two repelling periodic points $z$ and $z'$ on the same orbit with non-trivial orbit
portrait, there exists a repelling periodic or pre-periodic point $w$ on a different
orbit so that two periodic dynamical rays landing at $w$ separate $z$ from $z'$.
\end{lemma}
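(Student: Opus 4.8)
The plan is to exploit the fact that in the super-attracting case $K_c$ is connected and locally connected, hence arcwise connected, and to locate \emph{between} $z$ and $z'$ a repelling periodic point at which a separating pair of rays lands. Note first that the hypothesis (a repelling orbit with non-trivial portrait) is vacuous when $c=0$, so we may assume $c\neq 0$ and $K_c$ genuinely carries bounded Fatou components. By Lemma~\ref{Lem:RegArcs} there is a unique regular arc $\alpha=[z,z']\subset K_c$. I would begin with the elementary observation that any point $p$ in the interior of $\alpha$ which is either a branch point of $K_c$ or a boundary point of a bounded Fatou component crossed by $\alpha$ is a cut point separating $z$ from $z'$: uniqueness of the regular arc together with connectedness of $\mathbb{C}\setminus K_c$ forces $z$ and $z'$ into different components of the corresponding planar cut, and for a point of $J_c$ this cut can be realized by the external rays landing there.

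Next I would produce such a separator of the correct dynamical type by comparing $\alpha$ with the critical orbit. Consider the median $b$ of $z$, $z'$ and the critical point $0$ in the regular-arc tree, i.e. the common point of $[z,z']$, $[z,0]$ and $[z',0]$; if $b$ falls in the interior of $\alpha$ it separates $z$ from $z'$, and when $b$ coincides with one of $z,z'$ one repeats the argument with the critical value, which by Lemma~\ref{l:tree2} is an \emph{endpoint} of the Hubbard tree and therefore lies strictly on the far side. Two cases then occur. If $b\in J_c$, it is a branch point, and by Lemma~\ref{l:tree3} such a point is (pre)periodic and lies on a repelling orbit, with at least two external rays landing at it; thus $b$ already supplies the separating ray pair. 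If instead $b$ lies in the Fatou set, it belongs to a bounded Fatou component $U$ which then separates $z$ from $z'$ in the plane, and I would use that the external angles landing on the Jordan curve $\partial U$ leave distinct gaps for the $z$-side and the $z'$-side, so a co-landing pair of external rays at some point $w\in\partial U$ realizes the separation.

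The crux — and the part I expect to be genuinely delicate — is reconciling three demands simultaneously: separation, \emph{periodicity} of the rays, and the requirement $w\notin\mathcal{O}$. For periodicity I would arrange the separating component to be one of the components $U_0,\dots,U_{n-1}$ of the super-attracting cycle: the first return map acts on $\partial U_i$ with expanding boundary dynamics, so $\partial U_i$ carries repelling periodic points that are landing points of periodic external rays, and among these one selects a periodic ray pair effecting the separation (a strictly pre-periodic component or branch point is either pulled back to the periodic model along the critical orbit or accepted as the pre-periodic $w$ the statement allows). To guarantee $w\notin\mathcal{O}$ I would invoke that $\mathcal{O}$ is a single finite repelling orbit, distinct from the super-attracting cycle: if the first separating point produced happened to be some $z_j\in\mathcal{O}$ lying strictly between $z$ and $z'$, then any cut point in the subarc $(z,z_j)$ separates $z$ from $z_j$ and hence from $z'$, and such cut points occur in infinite supply (on the relevant component boundary or arc interior), so one missing the finite set $\mathcal{O}$ can be chosen. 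The interplay between the Hubbard-tree combinatorics of Lemma~\ref{l:tree3} and the characteristic-angle description of the portrait in Lemma~\ref{CharPoint} is exactly what must be leveraged to meet all three requirements at once.
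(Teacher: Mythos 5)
Your separating candidates all come from the static topology of the arc $[z,z']$: a branch point of the tree located as the median of $\{z,z',0\}$ (or of $\{z,z',c\}$), or a bounded Fatou component that the arc crosses. Neither need exist, and your fallback does not work: if $z$ and $z'$ both lie on the regular arc $[0,c]$ in the order $0,z,z',c$, then the median of $\{z,z',0\}$ is $z$ and the median of $\{z,z',c\}$ is $z'$ --- the fact that $c$ is an endpoint of $\Gamma$ (Lemma~\ref{l:tree2}) does not push the second median into the interior, contrary to your ``lies strictly on the far side''. In that configuration, if $(z,z')$ also contains no branch point of the tree and meets no bounded Fatou component, your argument produces no candidate $w$ at all. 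This is precisely the case the paper treats with a genuinely dynamical argument: after normalizing $z$ to be the characteristic point (hence an endpoint of the subtree $\Gamma'$ spanned by $\mathcal{O}$, by Lemma~\ref{CharPoint}) and assuming $(z,z')$ contains no further point of $\mathcal{O}$, one takes $k$ with $f_c^{\circ k}(z')=z$; either $f_c^{\circ k}$ is injective on $[z,z']$ and its image contains $[z,z']$, yielding a repelling fixed point of $f_c^{\circ k}$ in the open arc, or one cuts at the first precritical time $k'$ and finds $z''$ so that $f_c^{\circ (k'+1)}$ maps $[z'',z']$ homeomorphically onto an arc containing $[z,z']$, again yielding a repelling fixed point of an iterate strictly between $z$ and $z'$. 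This expansion/fixed-point step is the heart of the lemma and is entirely absent from your proposal; without it, or without a proof that every nondegenerate subarc of the tree must contain a branch point or cross a Fatou component (which you do not attempt), the proof is incomplete.

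Two smaller points. Your device for ensuring $w\notin\mathcal{O}$ --- ``cut points occur in infinite supply'' on a subarc --- is not justified: an arbitrary cut point of $K_c$ need not be the landing point of rational rays, and the usable separators (branch points, iterated preimages of $\Gamma\cap\partial U_1$, fixed points of iterates) are not obviously infinite on a given subarc. The paper avoids this by assuming from the start that $(z,z')$ contains no point of $\mathcal{O}$, which is harmless since a ray pair separating $z$ from the nearest orbit point inside $(z,z')$ also separates $z$ from $z'$. Finally, your worry about arranging the separating Fatou component to be on the periodic cycle is unnecessary: the lemma only requires $w$ to be repelling periodic \emph{or pre-periodic} with rational rays, and every boundary point of a (pre)periodic Fatou component met by the invariant tree is an iterated preimage of a periodic boundary point, hence already of this type.
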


\begin{proof}
By Lemma \ref{CharPoint}, the characteristic point on the orbit of $z$ is on
the Hubbard tree $\Gamma$, hence the entire orbit of $z$ is. Let $\Gamma'$ be the
union of regular arcs between the points on the orbit of $z$; clearly
$\Gamma' \subset \Gamma$. Let $\Gamma'' \subset \Gamma$ be the component of $\Gamma \setminus
\{ f_c^{-1}(z) \}$ containing the critical point. Then any regular arc in
$\Gamma' \setminus \Gamma''$ has its $f_c$-image in $\Gamma'$, so any regular arc in
$\Gamma'$ has its $f_c$-image in $\Gamma'\cup[z,c]$ (where $c$ is the critical
value). 

Suppose first that $z$ is the characteristic point of its orbit; then $z$ is an
endpoint of the tree $\Gamma'$ (similarly as in Lemma \ref{l:tree2}). We may suppose
that $\left( z,z' \right)$ does not contain any point on the orbit of $z$. If $\left( z,z' \right)$ contains a
branch point $w$ of $\Gamma'$, then all points on the forward orbit of $w$ have at
least as many branches in $\Gamma'$ as $w$, except when the orbit runs through $z$;
in the latter case, the orbit may lose one branch (the branch to the critical
value $c$), but $z$ is in fact an endpoint of $\Gamma'$ and thus cannot be on the orbit
of $w$. Therefore, this branch point $w$ must be (pre)periodic. If $w$ is a repelling (pre)periodic point, then it must be the landing point of at least two rational dynamical rays and we are done. If $w$ lies on the critical orbit (let $U$ be the periodic Fatou component containing $w$), then there must be at least one repelling (pre)periodic point $w'$ other than $z$ and $z'$ on $\left[ z,z' \right] \cap \partial U$ separating $z$ from $z'$. As before, $w'$ must be the landing point of at least two rational dynamical rays. Thus the lemma is proved if $(z,z')$ contains a branch point of $\Gamma'$. 

We now assume that $\left( z,z' \right)$ contains no branch point of the tree, and no point on the
orbit of $z$. Let $n$ be the period of $z$, and let $k \in \{ 1,2,\cdots,n-1 \}$ be
so that $f^{\circ k}(z')=z$. If $f^{\circ k}$ is injective on $[z,z']$, then $\left[ z,z' \right]$ must contain a (repelling) fixed point of $f^{\circ k}$ and the lemma is proved in this case.

Otherwise, there is a minimal $k' < k < n$ such that $f_c^{\circ k'}\left( \left[ z,z' \right] \right) \ni 0$. Then
there is a point $z''\in \left[ z,z' \right]$ such that $f^{\circ (k'+1)} \colon [z'',z'] \to
\left[ c,f^{\circ (k'+1)}(z')\right] \supset [z',z]$ is a homeomorphism, and again there is a
point $w \in (z,z')$ which is fixed under $f^{\circ (k'+1)}$ (necessarily repelling), so this case is done as
well.

In order to treat the case that $z$ is not the characteristic point of its orbit,
it suffices to iterate $f_c$ until it brings $z$ to the characteristic point.
\end{proof}
 
We will need the concept of parabolic trees, which are defined in analogy with Hubbard trees for post-critically finite polynomials. Our definition will follow \cite[Section 5]{HS}. The proofs of the basic properties of the tree can be found in \cite[Lemma 3.5, Lemma 3.6]{S1a}.

\begin{definition}[Parabolic Tree]
If $c$ lies on a parabolic root arc of period $k$, we define a \emph{loose parabolic tree} of $f_c$ as a minimal tree within the filled-in Julia set that connects the parabolic orbit and the critical orbit, so that it intersects the critical value Fatou component along a simple $f_c^{\circ k}$-invariant curve connecting the critical value to the characteristic parabolic point, and it intersects any other Fatou component along a simple curve that is an iterated pre-image of the curve in the critical value Fatou component. Since the filled-in Julia set of a parabolic polynomial is locally connected and hence path connected, any loose parabolic tree connecting the parabolic orbit is uniquely defined up to homotopies within bounded Fatou components. It is easy to see that any loose parabolic tree intersects the Julia set in a Cantor set, and these points of intersection are the same for any loose tree (note that for simple parabolics, any two periodic Fatou components have disjoint closures).
\end{definition}

By construction, the forward image of a loose parabolic tree is again a loose parabolic tree. A simple standard argument (analogous to the post-critically finite case) shows that the boundary of the critical value Fatou component intersects the tree at exactly one point (the characteristic parabolic point), and the boundary of any other bounded Fatou component meets the tree in at most $d$ points, which are iterated pre-images of the characteristic parabolic point \cite[Lemma 3.5]{S1a}. The critical value is an endpoint of any loose parabolic tree. All branch points of a loose parabolic tree are either in bounded Fatou components or repelling (pre-)periodic points; in particular, no parabolic point (of odd period) is a branch point.

Following \cite[\S 3]{S1a}, we now define a preferred parabolic tree as follows: Let $U$ be the critical value Fatou component of $f_c$, and let $w$ be the characteristic parabolic periodic point. First we want to connect the critical value $c$ in $U$ to $w$ by a simple curve which
is forward invariant under the dynamics. We will use Fatou coordinates for the attracting
petal of the dynamics \cite[\S 10]{M1new}. In these coordinates, the dynamics is simply addition
by $+1$, and our curve will just be the pre-image under the Fatou coordinate of a horizontal straight line in a right half-plane connecting the images of the critical orbit. Since any bounded Fatou component eventually maps onto the critical value Fatou component, we now require the parabolic tree in any other bounded Fatou component to be a pre-image of this chosen curve. With this choice, we have specified a preferred tree which is invariant under the dynamics. We will refer to this tree as \emph{the} parabolic tree.

\begin{lemma}[Orbit Separation Lemma For Two Parabolic Points]
\label{Lem:OrbitSeparationParabolic}
Suppose that $f_{c}$ has a parabolic orbit. Then for any two parabolic periodic
points $z \neq z'$, there exists a ray pair landing at a repelling periodic or
pre-periodic point which separates $z'$ from $z$.
\end{lemma}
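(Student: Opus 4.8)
The plan is to mirror the proof of Lemma~\ref{Lem:OrbitSeparationSuperattr}, replacing the Hubbard tree by the parabolic tree $\Gamma$ of $f_c$ whose basic properties were recorded above: the critical value is an endpoint of $\Gamma$, the tree is forward invariant, and every branch point of $\Gamma$ is either contained in a bounded Fatou component or is a repelling (pre-)periodic point, so that no parabolic point is a branch point. Since $f_c$ has a unique parabolic cycle, every parabolic point lies on the orbit of $z$; by iterating $f_c$ I may therefore assume without loss of generality that $z$ is the characteristic parabolic point, and just as in Lemma~\ref{l:tree2} this makes $z$ an endpoint of the subtree $\Gamma'\subset\Gamma$ spanned by the parabolic orbit.

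Next I would examine the regular arc $[z,z']\subset\Gamma'$. Since a ray pair separating $z$ from an intermediate orbit point already separates $z$ from $z'$, I may assume that the open arc $(z,z')$ contains no point of the parabolic orbit. The first case is that $(z,z')$ contains a branch point $w$ of $\Gamma'$. By the invariance argument of Lemma~\ref{Lem:OrbitSeparationSuperattr}, the forward orbit of $w$ never loses branches except possibly when it runs through the endpoint $z$, which it cannot since $z$ is an endpoint; hence $w$ is (pre-)periodic. If $w$ is a repelling (pre-)periodic point it is the landing point of at least two rays, and, lying in the interior of $[z,z']$, these separate $z$ from $z'$. If instead $w$ lies in a bounded Fatou component $U$, then $[z,z']\cap\partial U$ contains a repelling (pre-)periodic cut point other than $z,z'$, and this point performs the separation, exactly as before.

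The remaining case is that $(z,z')$ contains no branch point. Writing $n$ for the period and choosing $k\in\{1,\dots,n-1\}$ with $f_c^{\circ k}(z')=z$, I would apply $f_c^{\circ k}$ to $[z,z']$: if it is injective there, the arc contains a fixed point $w$ of $f_c^{\circ k}$ in its interior; if not, there is a minimal $k'<k$ with $0\in f_c^{\circ k'}([z,z'])$ and a point $z''$ with $f_c^{\circ(k'+1)}\colon[z'',z']\to[c,f_c^{\circ(k'+1)}(z')]\supset[z',z]$ a homeomorphism, again yielding a fixed point $w\in(z,z')$. In either subcase $w$ is periodic and lies in the interior of $[z,z']$, which contains no parabolic point; by uniqueness of the parabolic cycle, $w$ must therefore be repelling, and as an interior point of the arc it carries a separating ray pair.

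The main obstacle, and the one point where the parabolic setting genuinely differs from the super-attracting one, is guaranteeing that \emph{every} separating periodic point produced is repelling rather than parabolic. This is exactly what the recorded structural properties of the parabolic tree deliver: because the parabolic points are endpoints (hence never branch points), and because the open arc $(z,z')$ has been arranged to be free of points of the unique parabolic cycle, any branch point or fixed point found in the interior of $[z,z']$ automatically avoids the parabolic orbit and is repelling. Once this is in place, the topological cut-point and invariance arguments transfer essentially verbatim from Lemma~\ref{Lem:OrbitSeparationSuperattr}.
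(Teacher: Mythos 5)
Your proof is correct and follows essentially the same route as the paper's: reduce to the case where $z$ is the characteristic parabolic point, use the parabolic tree, dispose of the branch-point case via the recorded structural properties, and otherwise produce a repelling fixed point of $f_c^{\circ k}$ in the interior of $[z,z']$. Your treatment is slightly more explicit than the paper's (handling the non-injective sub-case and the branch point inside a Fatou component separately, as in the super-attracting lemma), but the underlying argument is the same.
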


\begin{proof}
It suffices to prove the lemma when $z$ is the characteristic point of the parabolic cycle (otherwise we can iterate $f_c$ until $z$ satisfies this condition). We may assume that the part of the parabolic tree between $z$ and $z'$ neither traverses a periodic Fatou component except at its ends, nor does it traverse another parabolic periodic point. If there is a branch point of the tree between $z$ and $z'$, this branch point must be the landing point of two rational rays separating the orbit and we are done.

Otherwise, we argue as in the previous lemma: let $n$ be the period of $z$, and let $k \in \{ 1,2,\cdots,n-1 \}$ be so that $f^{\circ k}(z')=z$. Put $f^{\circ k}(z)=z''$, then $f^{\circ k}$ maps $\left[ z',z \right]$ onto $\left[ z ,z'' \right] \supseteq \left[ z ,z' \right]$. Since $\left[ z',z \right]$ contains no branch point of the tree, $\left[ z ,z'' \right]$ cannot branch off as well; which implies that there exists $z^{\ast} \in \left( z',z \right)$ fixed by $f^{\circ k}$. Clearly, $z^{\ast}$ is a repelling periodic point disconnecting the Julia set and hence is the landing point of two rational dynamical rays separating the orbit. This completes the proof of the lemma.
\end{proof}

\subsection{Results}\label{ss:osldyn}
Now we can show that at least certain rays land pairwise and give a complete description of the periodic rays landing at primitive parabolic parameters.

\begin{theorem}[A Necessary Condition]\label{t:perchar}
If a parameter ray $\mathcal{R}_\theta$ at a periodic angle lands at a parameter $c_0$, then the
dynamical ray $\mathcal{R}_\theta^{c_0}$ lands at the characteristic point of the parabolic
orbit of $c_0$.
\end{theorem}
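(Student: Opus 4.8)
The plan is to argue by contradiction, combining the orbit separation lemmas with the duality between parameter and dynamical rays. By Lemma~\ref{per-rays-land} we already know that $\mathcal{R}_\theta^{c_0}$ lands at \emph{some} point $z_j$ of the parabolic orbit of $c_0$; the task is to upgrade this to the characteristic point $w$, i.e.\ the point of the parabolic orbit lying on the boundary of the critical value Fatou component. So I would assume $z_j \neq w$ and seek a contradiction. Note this formulation covers both the essential and non-essential cases uniformly, since the separation lemma below applies to any parabolic orbit.

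The first step is to invoke Lemma~\ref{Lem:OrbitSeparationParabolic} to produce a ray pair at angles $(\phi,\phi')$ landing at a \emph{repelling} (pre)periodic point $y$ that separates $z_j$ from $w$. The reason for insisting on a repelling landing point is \textbf{stability}: by Lemma~\ref{l:preserving-portrait1} (and its evident analogue for pre-periodic rays), there is a path-connected neighborhood $U$ of $c_0$ throughout which $\mathcal{R}_\phi^c$ and $\mathcal{R}_{\phi'}^c$ continue to co-land at the continuation $y(c)$, with the two rays and their common landing point moving continuously with $c$; by the boundary clause of that lemma this persists up to and including $c_0$ itself. For each such $c$ the two rays cut the dynamical plane into two pieces $V_0^c \sqcup V_1^c$, labelled so that the (fixed) open arc of external angles subtending $V_0^c$ contains $\theta$. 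At $c_0$ this is the component containing $z_j$, since $\mathcal{R}_\theta^{c_0}$ lands at $z_j$ and $\theta \notin \{\phi,\phi'\}$ (as $z_j$ is parabolic while $y$ is repelling, so $z_j \neq y$).

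The second step is to locate the critical value on each side. For $c \in U \cap \mathcal{R}_\theta$ outside $\mathcal{M}_d$ the external angle of the critical value is $t(c)=\theta$, so the critical value lies on $\mathcal{R}_\theta^c \subset V_0^c$. On the other hand, at $c_0$ the critical value $c_0$ lies in the critical value Fatou component $U_{\mathrm{cv}}$, whose boundary contains $w$; since $U_{\mathrm{cv}}$ meets neither the two separating rays (which lie outside $K_{c_0}$) nor the repelling point $y$ (which lies in the Julia set), it lies entirely on the $w$-side, i.e.\ $c_0 \in V_1^{c_0}$.

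These two computations are incompatible. Consider the continuously varying ``wall'' $\bigcup_{c\in U}\{c\}\times\left(\mathcal{R}_\phi^c\cup\mathcal{R}_{\phi'}^c\cup\{y(c)\}\right)$; its complement in $U\times\mathbb{C}$ has exactly two connected components $\mathcal{V}_0,\mathcal{V}_1$ restricting to $V_0^c,V_1^c$ in each slice. The critical-value section $c\mapsto(c,c)$ is continuous, avoids the wall throughout $U$ (the critical value never lies on a dynamical ray other than its own, and is never the repelling point $y(c)$), and hence has connected image contained in a single component. But the second step places this image in $\mathcal{V}_0$ along the ray and in $\mathcal{V}_1$ at $c_0$ — a contradiction, so $z_j=w$. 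I expect the main obstacle to be the stability and continuity bookkeeping for the separating ray pair up to and including the parabolic parameter $c_0$ (in particular checking the ray period so that Lemma~\ref{l:preserving-portrait1} genuinely applies at the boundary point $c_0$), rather than the topological separation argument itself.
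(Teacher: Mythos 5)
Your proposal is correct and follows essentially the same route as the paper: assume the ray lands at a non-characteristic parabolic point, apply the Orbit Separation Lemma \ref{Lem:OrbitSeparationParabolic} to get a stable repelling ray pair separating that point from the characteristic point, and derive a contradiction from the fact that along the parameter ray $\mathcal{R}_\theta$ the critical value lies on the dynamical ray $\mathcal{R}_\theta^c$ while at $c_0$ it sits on the other side of the partition. Your explicit treatment of the continuously varying wall just spells out the stability-of-the-partition step that the paper delegates to the cited lemmas.
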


\begin{proof}
The landing point $c_0$ of $\mathcal{R}_\theta$ is necessarily parabolic by Theorem \ref{per-rays-land}, and 
the dynamical ray $\mathcal{R}_\theta^{c_0}$ lands at a point of the parabolic orbit of $c_0$.  Without restriction we assume that the exact parabolic orbit period is at least $2$.

Let, $z$ be the characteristic point on the parabolic cycle. We assume that the dynamical ray $\mathcal{R}_\theta^{c_0}$ lands at some point $z'$ of the parabolic orbit where $z' \neq z$. Then the Orbit Separation Lemma \ref{Lem:OrbitSeparationParabolic} shows that there is a rational ray pair at
angles $\theta_1,\theta_2$ landing at some common (pre)periodic repelling point $w$ separating $z$ from $z'$.  In particular, the dynamical ray $\mathcal{R}_\theta^{c_0}$ and the critical value $c_0$ belong to two different regions of the partition $\mathbb{C} \setminus (\mathcal{R}_{\theta_1}^{c_0} \cup \mathcal{R}_{\theta_2}^{c_0})$. For all parameters $c$ close to $c_0$, the two parameter rays at angles $\theta_1$ and $\theta_2$ continue to land together \cite[Lemma~B.1]{GM1} and the partition is stable \cite[Lemma 2.2]{S1a} in following sense: the dynamical ray $\mathcal{R}_\theta^{c}$ and the critical value $c$ belong to different regions of this partition in the dynamical plane of $f_c$. But $c_0$ is a limit point of the parameter ray $\mathcal{R}_\theta$ and there are parameters $c$ arbitrarily close to $c_0$ such that in the dynamical plane of $f_c$, the critical value $c$ lies on the dynamical ray $\mathcal{R}_\theta^{c}$; a contradiction. This proves that the landing point of the dynamical ray $\mathcal{R}_\theta^{c_0}$ must be the characteristic point $z$ on the parabolic orbit.
\end{proof}

The next result shows that any parabolic parameter with a non-trivial orbit portrait is the landing point of at least two rational parameter rays.

\begin{lemma}[At Least the Characteristic Rays Land at a Parameter]
\label{At least two rays}
Every parabolic parameter with a non-trivial portrait is the landing point of the parameter rays
at the characteristic angles of the parabolic orbit portrait.
\end{lemma}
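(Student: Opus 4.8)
The plan is to identify $c_0$ with the root of the wake of $\mathcal{P}$. Write $t^-,t^+$ for the characteristic angles of $\mathcal{P}$. By Theorem \ref{t:p-wake} the parameter rays $\mathcal{R}_{t^-}$ and $\mathcal{R}_{t^+}$ land at a common parabolic parameter $c_1$, and these two rays together with $c_1$ bound the wake $\mathcal{W}_{\mathcal{P}}$. It therefore suffices to prove $c_0=c_1$: then the characteristic rays land at $c_0$, which is exactly the assertion. I first record the topological input I will use at the end. The boundary of $\mathcal{W}_{\mathcal{P}}$ is contained in the closed set $\mathcal{R}_{t^-}\cup\mathcal{R}_{t^+}\cup\{c_1\}$, and since both parameter rays lie in $\mathbb{C}\setminus\mathcal{M}_d$, we have $\partial\mathcal{W}_{\mathcal{P}}\cap\mathcal{M}_d=\{c_1\}$. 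Hence it is enough to show that $c_0\in\partial\mathcal{W}_{\mathcal{P}}$, i.e. that $c_0\in\overline{\mathcal{W}_{\mathcal{P}}}\setminus\mathcal{W}_{\mathcal{P}}$.

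Next I would show $c_0\notin\mathcal{W}_{\mathcal{P}}$. By Lemma \ref{c:wake}, $\mathcal{W}_{\mathcal{P}}$ is precisely the set of parameters $c$ for which $f_c$ has a \emph{repelling} orbit with portrait $\mathcal{P}$. At $c_0$, however, the portrait $\mathcal{P}$ is carried by the parabolic orbit. Since every rational dynamical ray lands at a single point, two distinct periodic orbits can never share the same orbit portrait (the common angle sets would force a shared landing point). Consequently $f_{c_0}$ cannot also possess a repelling orbit with portrait $\mathcal{P}$, so $c_0\notin\mathcal{W}_{\mathcal{P}}$.

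The substantive step is to show $c_0\in\overline{\mathcal{W}_{\mathcal{P}}}$, and this is where I expect the real work to lie. The strategy is to produce parameters $c$ arbitrarily close to $c_0$ for which $f_c$ has a repelling orbit carrying $\mathcal{P}$, because Lemma \ref{c:wake} then automatically places every such $c$ inside $\mathcal{W}_{\mathcal{P}}$. Here I would invoke the local description of a neighborhood of $c_0$ from Lemma \ref{l:rph}. In the non-primitive case, with exact orbit period $k$ and ray period $n$, there is a holomorphic continuation $z_1(c)$ of the parabolic point of exact period $k$; on the side of the relevant hyperbolic boundary where $z_1(c)$ is repelling, a parabolic-perturbation analysis shows that $z_1(c)$ retains exactly the period-$n$ rays landing at the parabolic point, so $f_c$ has a repelling orbit with portrait $\mathcal{P}$. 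In the primitive case the characteristic point $z_0$ of period $n$ splits, under perturbation along the two-sheeted cover of Lemma \ref{l:rph}, into two period-$n$ points; on the wake side the two characteristic rays continue to co-land at one of them, again realizing $\mathcal{P}$ at a repelling orbit. In either case such parameters accumulate at $c_0$, giving $c_0\in\overline{\mathcal{W}_{\mathcal{P}}}$. The delicate point, and the main obstacle, is controlling the landing of the period-$n$ rays through the parabolic implosion: verifying that on the correct side the continued orbit is repelling and keeps precisely the rays of $\mathcal{P}$, rather than losing or gaining rays.

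Combining the two previous steps, $c_0\in\overline{\mathcal{W}_{\mathcal{P}}}\setminus\mathcal{W}_{\mathcal{P}}=\partial\mathcal{W}_{\mathcal{P}}$, using that $\mathcal{W}_{\mathcal{P}}$ is open. Since $c_0\in\mathcal{M}_d$ and $\partial\mathcal{W}_{\mathcal{P}}\cap\mathcal{M}_d=\{c_1\}$, we conclude $c_0=c_1$, and hence the parameter rays $\mathcal{R}_{t^-}$ and $\mathcal{R}_{t^+}$ at the characteristic angles land at $c_0$, as claimed.
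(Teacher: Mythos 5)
Your overall strategy is the same as the paper's: both arguments reduce the lemma to showing that the given parabolic parameter $c_0$ lies on $\partial\mathcal{W}_{\mathcal{P}}$, using Lemma \ref{c:wake} together with the observation that $\partial\mathcal{W}_{\mathcal{P}}\cap\mathcal{M}_d$ consists of the single point at which the two characteristic parameter rays land. The parts you carry out in full are correct: $c_0\notin\mathcal{W}_{\mathcal{P}}$ because a rational dynamical ray lands at only one point, so the portrait cannot simultaneously be carried by a repelling orbit; and once $c_0\in\overline{\mathcal{W}_{\mathcal{P}}}$ is known, the identification $c_0=c_1$ follows.

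The step you yourself flag as ``the main obstacle'' --- producing parameters arbitrarily close to $c_0$ whose maps have a \emph{repelling} orbit carrying $\mathcal{P}$ --- is, however, a genuine gap as written, and it is the mathematical heart of the lemma. Lemma \ref{l:rph} only controls the periodic \emph{orbits} under perturbation (which continuations exist and which become repelling); it says nothing about where the period-$n$ dynamical rays land after the perturbation. In the satellite case the $n$-periodic rays that land at the parabolic point could a priori redistribute themselves among the $q$ period-$n$ points that split off, rather than remain attached to the continuation of the period-$k$ orbit; ruling this out requires a genuine argument about the perturbed dynamics near the parabolic point (or a ray-pair stability argument valid on the correct side), not merely the open-mapping/implicit-function analysis of Lemma \ref{l:rph}. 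The paper closes exactly this gap by invoking \cite[Theorem 4.1]{M2a}, whose proof generalizes to unicritical maps of degree $d$ and asserts precisely that every parabolic parameter with portrait $\mathcal{P}$ is a limit of parameters having a repelling orbit with portrait $\mathcal{P}$. If you either cite that result or supply the perturbation analysis, your proof is complete and coincides with the paper's.
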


\begin{proof}
Let $c$ be a parabolic parameter and denote its parabolic orbit portrait by
$\mathcal{P} = \{ \mathcal{A}_1,\cdots, \mathcal{A}_{p} \}$. Let $t^-$ and $t^+$ be the
characteristic angles and label the elements of $\mathcal{P}$ cyclically so that
$t^-, t^+ \in \mathcal{A}_1$.

By Theorem \ref{t:p-wake}, the two parameter rays $\mathcal{R}_{t^-}$ and
$\mathcal{R}_{t^+}$ land at a common parabolic parameter $c'$ (say) with associated wake $\emph{W}$ and let
$\mathcal{P}' = \{ \mathcal{A}_1^{\prime},\cdots, \mathcal{A}_{q}^{\prime} \}$ be the orbit portrait of its parabolic orbit. By Theorem \ref{t:perchar}, the dynamical rays $\mathcal{R}_{t^-}^{c'}$ and
$\mathcal{R}_{t^+}^{c'}$ land at the same point of the parabolic orbit. There is thus an
element in $\mathcal{P}'$ which contains both $t^-$ and $t^+$; call this element
$\mathcal{A}_1^{\prime}$. It follows from Lemma \ref{Lem:Characteristic angles} that $ \mathcal{P}= \mathcal{P}'$. Therefore, the parabolic parameters $c$ and $c'$ have the same parabolic orbit portrait $\mathcal{P}$.

By \cite[Theorem 4.1]{M2a} (the quadratic case easily generalizes to unicritical polynomials of any degree), both $c$ and $c'$ are limit points of parameters with a repelling periodic orbit with associated orbit portrait $\mathcal{P}$. Now Lemma \ref{c:wake} tells us that $c, c' \in \partial W$. This clearly implies that $c = c'$ and $c$ is the landing point of the parameter rays at the characteristic angles of the parabolic orbit portrait.
\end{proof}

\begin{lemma}[Parabolic Parameters with Trivial Portrait]
\label{trivial portraits}
Let $c_0$ be a parabolic parameter with a trivial orbit portrait. Then at least one parameter
ray lands at $c_0$.
\end{lemma}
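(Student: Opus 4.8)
The plan is to produce an explicit candidate angle from the dynamical plane of $f_{c_0}$ and then force a parameter ray at that angle to accumulate at $c_0$ by a purely local monodromy argument. First I would locate the candidate: by Lemma~\ref{Lem:Landing of Dyn Rays} the parabolic orbit of $f_{c_0}$ is the landing point of at least one periodic dynamical ray, and since the portrait is trivial exactly one ray lands at each point of the orbit. Let $z_0$ be the characteristic parabolic point and let $\theta$ be the angle of the unique dynamical ray $\mathcal{R}_\theta^{c_0}$ landing at $z_0$, with $n$ its exact period. A trivial portrait is primitive (Lemma~\ref{folklore}), so the exact orbit period equals the ray period $n$, and the primitive case of Lemma~\ref{l:rph} applies: there is a two-sheeted cover $\pi\colon U'\to U$ of a neighborhood of $c_0$, ramified only over $c_0$, and $z_0$ is a double fixed point of $f_{c_0}^{\circ n}$ that splits, under perturbation, into two period-$n$ points which are interchanged by the monodromy of a small loop around $c_0$.

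Next I would track the landing point of the dynamical ray at angle $\theta$ across a neighborhood of $c_0$ in parameter space. For $c\in U\setminus\{c_0\}$ lying on none of the finitely many parameter rays at the angles $\theta,d\theta,\dots,d^{n-1}\theta$, Lemma~\ref{Lem:Landing of Dyn Rays} guarantees that $\mathcal{R}_\theta^{c}$ lands at a repelling periodic point (shrinking $U$ so that $c_0$ is the only parabolic parameter of ray period $n$ in $U$, which is possible by Lemma~\ref{l:parcount}). By the local stability of the co-landing ray at a repelling point (\cite[Lemma~B.1]{GM1} together with Lemma~\ref{l:preserving-portrait1}), this landing point is the continuation of $z_0$; since the only period-$n$ points near $z_0$ are the two merging points of the previous paragraph, the landing point is one of them. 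Crucially, it is a \emph{single-valued}, continuous function of $c$ wherever it is defined.

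The monodromy argument then runs by contradiction. Suppose no parameter ray at an angle $d^{k}\theta$ has $c_0$ as a limit point. Then some punctured neighborhood of $c_0$ meets neither $c_0$, nor any other parabolic parameter of ray period $n$, nor any of these parameter rays; over this punctured disk the landing point of $\mathcal{R}_\theta^{c}$ is a continuous, single-valued selection of one of the two period-$n$ points merging at $z_0$. This is impossible, since those two points are interchanged by the nontrivial monodromy around $c_0$ furnished by Lemma~\ref{l:rph}. Hence at least one parameter ray at some angle $d^{k}\theta$ accumulates at $c_0$. Being at a periodic angle, that ray lands at a unique parabolic parameter by Lemma~\ref{per-rays-land}; having $c_0$ among its limit points, it must land at $c_0$. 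This exhibits a parameter ray at a periodic angle landing at $c_0$, as required.

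I expect the main obstacle to be the second step, namely verifying that for \emph{every} admissible $c$ near $c_0$ the landing point of $\mathcal{R}_\theta^{c}$ is genuinely the local continuation of the parabolic point $z_0$ (one of the two merging points), and not some unrelated repelling cycle. This must be argued by combining the local continuation of the co-landing ray (\cite[Lemma~B.1]{GM1}) with the exact count of period-$n$ points near $z_0$ coming from the Taylor expansion in the proof of Lemma~\ref{l:rph}, while handling with care that $z_0$ is parabolic: one cannot continue $z_0$ itself by the implicit function theorem, only the two repelling points into which it splits, and one must check that the ray's landing point stays trapped among these two throughout the punctured neighborhood so that the single-valuedness it enjoys genuinely clashes with the monodromy swap.
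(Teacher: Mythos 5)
Your strategy is genuinely different from the paper's: you try to force a landing parameter ray out of a monodromy obstruction on the landing point of the single dynamical ray $\mathcal{R}_\theta^c$, whereas the paper produces two parameters $c_1,c_2\in\mathcal{M}_d$ near $c_0$ at which the orbit $z_2$ from Lemma~\ref{l:rph} is respectively repelling with a nonempty portrait and attracting with an empty portrait, and then uses Lemma~\ref{l:preserving-portrait1} to conclude that every path joining $c_1$ to $c_2$ must cross a parameter ray at an angle of period $n$, which is then forced to land at $c_0$. Your monodromy idea is appealing, and the contradiction you aim for (a single-valued continuous selection from a pair of points swapped by the monodromy of Lemma~\ref{l:rph}) would indeed be decisive \emph{if} you knew that the landing point of $\mathcal{R}_\theta^{c}$ stays among the two continuations of $z_0$ throughout a punctured neighborhood of $c_0$.

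That, however, is exactly the step you have not proved, and the references you cite do not supply it. \cite[Lemma~B.1]{GM1} and Lemma~\ref{l:preserving-portrait1} control the landing point only near parameters where the ray already lands at a \emph{repelling} point; they say nothing about where $\mathcal{R}_\theta^c$ goes when one perturbs $c_0$ itself, where the landing point is parabolic. A priori the ray could land, for all small $c\neq c_0$, at a repelling periodic orbit of period dividing $n$ that has nothing to do with the parabolic orbit --- this is precisely the ``ray-jumping under perturbation of a parabolic'' phenomenon that the paper explicitly warns about right after Lemma~\ref{l:preserving-portrait1}, and which really occurs for non-unicritical cubic and biquadratic polynomials. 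The missing ingredient is essentially Theorem~\ref{t:cpl} (continuity of the landing point \emph{through} parabolic parameters), which appears only later in the paper and whose proof is itself nontrivial. You do flag this as ``the main obstacle,'' but you treat it as a verification to be carried out rather than as the core of the argument; as written, the proof is incomplete at its decisive point. If you pursue this route, you should also make explicit that the two merging fixed points remain distinct on the whole punctured disk (so that a continuous single-valued selection genuinely contradicts the swap), and that a continuity-plus-connectedness argument is needed even to pin the landing point near one single periodic point of $f_{c_0}$ before one can speak of ``the two merging points.''
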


\begin{proof}
We continue with the proof of the Lemma \ref{l:rph}. Let $f_{c_0}$ have a parabolic orbit of period $n$ and $z_0$ be a representative point of the parabolic orbit. Then $z_0$ is a fixed point of multiplicity 2 for the map $f_{c_0}^{\circ n}$ and there exists a two-sheeted cover $U'$ (with a projection map $\pi$ so that $\pi$ is branched only over $c_0$) of a neighborhood of $c_0$ such that
the two simple fixed points of the perturbed maps can be defined as the values of two holomorphic
functions $z_1(c'),z_2(c')$ with corresponding holomorphic multipliers $\lambda_1(c'),\lambda_2(c')$. Since we have $\lambda_i(\pi^{-1}(c_0)) = +1$, the Open Mapping Theorem implies that there is a
parameter $c_1^{\prime} \in U'$ such that $c_1 = \pi (c_1^{\prime}) \in \mathcal{M}_d$ and  $\vert \lambda_1(c_1^{\prime}) \vert < 1$ and therefore
$\vert \lambda_2(c_1^{\prime}) \vert > 1$ (there can be at most one non-repelling orbit). 
Since $z_2(c_1^{\prime})$ is repelling, it is the landing point of at least one periodic
dynamical ray (Lemma \ref{Lem:Landing of Dyn Rays}); so the corresponding orbit has a
portrait $\mathcal{P} \neq \emptyset$. Similarly, there is another parameter $c_2^{\prime} \in
U'$ such that $c_2 = \pi (c_2^{\prime}) \in \mathcal{M}_d$ and $\vert \lambda_2(c_2^{\prime}) \vert < 1$; so no periodic dynamical ray lands at $z_2(c_2^{\prime})$. We may assume that $U'$ is small enough so that it does not contain parabolic parameters other than $c_0$ of the same ray
period $n$. Then Lemma~\ref{l:preserving-portrait1} implies that any path connecting $c_1$ and
$c_2$ must cross a parameter ray at an angle of period $n$, which lands at $c_0$. 
\end{proof}

Now we can prove the Structure Theorem \ref{t:struct} in the primitive
case, in particular Statement 2:

\begin{corollary}[Parameter Rays Landing at Primitive Parameters]
\label{c:nonesspar} 
Let $c$ be a primitive parabolic parameter. Then $c$ is the landing point of the parameter rays at precisely those angles $\theta$ such that the corresponding dynamical rays $\mathcal{R}_{\theta}^c$ land at the characteristic parabolic point of $f_c$. The number of such rays is exactly two if
the parabolic orbit is essential, and exactly one otherwise.
\end{corollary}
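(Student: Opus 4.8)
The plan is to derive the corollary by combining the necessary condition of Theorem~\ref{t:perchar} (which supplies the upper bound on the number of landing parameter rays) with the sufficiency results Lemma~\ref{At least two rays} and Lemma~\ref{trivial portraits} (which supply the lower bound), and then to use the combinatorics of primitive portraits to make the two bounds coincide.

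First I would record the following immediate consequence of Theorem~\ref{t:perchar}: if a parameter ray $\mathcal{R}_\theta$ at a periodic angle lands at $c$, then the dynamical ray $\mathcal{R}_\theta^c$ lands at the characteristic parabolic point. Consequently the set of angles of parameter rays landing at $c$ is contained in the set $\mathcal{A}$ of angles whose dynamical rays land at the characteristic point. This already establishes the ``only those angles'' assertion and bounds the number of landing parameter rays from above by $\vert \mathcal{A} \vert$.

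Next I would split according to whether the parabolic orbit is essential. In the essential case the portrait is non-trivial, and since $c$ is primitive the dichotomy of Lemma~\ref{folklore} forces $\vert \mathcal{A}_j \vert = 2$ for every $j$; combined with Lemma~\ref{CharPoint} this identifies $\mathcal{A}$ with the pair of characteristic angles $\{t^-,t^+\}$. Lemma~\ref{At least two rays} then guarantees that both parameter rays $\mathcal{R}_{t^-}$ and $\mathcal{R}_{t^+}$ land at $c$, so together with the upper bound above exactly two parameter rays land at $c$, at precisely the two angles whose dynamical rays reach the characteristic point. In the non-essential case the portrait is trivial, so a single dynamical ray lands at the characteristic point and $\vert \mathcal{A} \vert = 1$; Lemma~\ref{trivial portraits} supplies at least one landing parameter ray while the upper bound supplies at most one, giving exactly one.

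I do not expect a serious obstacle, since the substantive analytic work is already contained in Theorem~\ref{t:perchar} and Lemmas~\ref{At least two rays} and~\ref{trivial portraits}. The only point demanding care is the purely combinatorial identification of $\mathcal{A}$ in the essential case — that primitivity together with essentiality forces exactly two rays at the characteristic point, and that these are the characteristic angles — which is where Lemma~\ref{folklore} and Lemma~\ref{CharPoint} enter. I would also remark, for completeness, that every non-essential parabolic parameter is automatically primitive (a trivial portrait has ray period equal to orbit period), so the non-essential case genuinely falls under this corollary.
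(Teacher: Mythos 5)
Your proposal is correct and follows essentially the same route as the paper: the upper bound comes from Theorem~\ref{t:perchar} and the lower bound from Lemma~\ref{At least two rays} and Lemma~\ref{trivial portraits}, exactly as in the paper's proof. Your only addition is to spell out, via Lemma~\ref{folklore} and Lemma~\ref{CharPoint}, why a primitive essential parameter has exactly two available angles at the characteristic point, a step the paper leaves implicit in the phrase ``there are only two (resp.\ one) such rays available in the primitive case.''
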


\begin{proof}
We know by Lemma \ref{At least two rays} and Lemma \ref{trivial portraits}: at least two parameter
rays at periodic angles land at $c$ if $f_c$ has a non-trivial parabolic orbit portrait, and at least one parameter ray at a periodic angle lands at $c$ if the associated parabolic portrait is trivial.

By Theorem \ref{t:perchar}, only those parameter rays $\mathcal{R}_\theta$ can land at
$c$ for which the dynamical ray $\mathcal{R}_\theta^{c}$ lands at the characteristic point
of the parabolic orbit and there are only two (resp. one) such rays available in the primitive case. 
\end{proof}

The state of affairs is now as follows: every parameter ray at a periodic angle lands at a parabolic parameter. Conversely, given a primitive parabolic parameter $c$, we have shown that it is the landing point of \emph{exactly} two or one parameter ray(s) at periodic angle(s) depending on whether the parabolic orbit portrait is non-trivial or not. On the other hand, if $c$ is a non-primitive (satellite) parabolic parameter, then it is the landing point of \emph{at least} two parameter rays at the characteristic angles of the associated parabolic orbit portrait. Thus it remains to show that a parabolic parameter of satellite type is the landing point of \emph{at most} two parameter rays at periodic angles. The following two sections will be devoted to the proof of this statement.

\section{Roots and Co-Roots of Hyperbolic Components}\label{s:hyp2}
The goal of this section is to show that every hyperbolic component of period different from one 
has exactly one root and exactly $d-2$ co-roots.

\begin{theorem}[Continuous Dependence of Landing Points on Parameters]
\label{t:cpl} 
Let $z_0$ be a repelling or parabolic periodic point of $f_{c_0}$.
For a dynamical ray $\mathcal{R}_\theta^{c_0}$ landing at $z_0$,
let $\Omega(\theta):= \{ c \in \mathbb{C} : \mathcal{R}_\theta^{c}$ lands $\}$.  Then
there is a continuous $z \colon \Omega(\theta)\to \mathbb{C}$ such that $z(c)$ is
the landing point of $\mathcal{R}_{\theta}^{c}$.
\end{theorem}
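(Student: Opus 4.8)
The plan is to give $z(c)$ by the only possible formula, namely the unique point of $\overline{\mathcal{R}_\theta^c}\cap K(f_c)$ when $\mathcal{R}_\theta^c$ lands, and then to prove continuity by stratifying $\Omega(\theta)$ according to the nature of the landing point. Since the ray lands at a periodic point, $\theta$ is periodic of some exact period $n$; hence by Lemma \ref{Lem:Landing of Dyn Rays} every $c\in\Omega(\theta)$ is of exactly one of two types: either $\mathcal{R}_\theta^c$ lands at a \emph{repelling} periodic point, or $c$ is parabolic and $\mathcal{R}_\theta^c$ lands at its parabolic orbit. By Lemma \ref{l:parcount} the parabolic parameters of ray period $n$ are finite, hence isolated in $\Omega(\theta)$, so it suffices to prove continuity separately on the open repelling locus and at each of these finitely many isolated parabolic points.

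On the repelling locus continuity is immediate, and in fact upgrades to holomorphy: if $\mathcal{R}_\theta^{c_1}$ lands at a repelling periodic point, then by \cite[Lemma B.1]{GM1} there is a neighborhood $U\subset\Omega(\theta)$ of $c_1$ together with a holomorphic continuation $z(c)$ of the landing point (obtained by the implicit function theorem from $f_c^{\circ n}(z)=z$) such that $\mathcal{R}_\theta^c$ lands at $z(c)$ throughout $U$. Thus $z$ is holomorphic, in particular continuous, at every repelling parameter.

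The substantive case, which I expect to be the main obstacle, is continuity at an isolated parabolic parameter $c_0$, where $\mathcal{R}_\theta^{c_0}$ lands at the parabolic point $z_0=z(c_0)$. I would argue by subsequences. For any sequence $c_j\to c_0$ in $\Omega(\theta)$ the landing points $z(c_j)$ are roots of the monic polynomial $f_c^{\circ n}(z)-z$ (in $z$) whose coefficients depend continuously on $c$; they therefore remain in a fixed compact set, and every subsequential limit $w$ is a periodic point of $f_{c_0}$ of period dividing $n$. It then remains to show $w=z_0$, which forces $z(c_j)\to z_0$ and yields continuity. The danger—and the heart of the difficulty, since near a parabolic parameter the landing point can a priori take a long detour before settling down—is that $w$ might be a \emph{different} periodic point of $f_{c_0}$, most delicately a distinct point of the same parabolic cycle. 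To exclude this I would invoke the orbit separation machinery: when $w$ and $z_0$ lie on the parabolic cycle, Lemma \ref{Lem:OrbitSeparationParabolic} produces a \emph{repelling} ray pair at some angles $\alpha,\beta$ separating $w$ from $z_0$ (and a separating repelling ray pair is likewise available in the remaining cases where $w$ lies on a different cycle). Such a ray pair is stable under perturbation by \cite[Lemma B.1]{GM1} and \cite[Lemma 2.2]{S1a}, so for all $c$ near $c_0$ the rays $\mathcal{R}_\alpha^c,\mathcal{R}_\beta^c$ co-land at a repelling point $v(c)$ and partition the plane into two components whose labelling is determined solely by comparing the fixed angle $\theta$ with the fixed angles $\alpha,\beta$. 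Since $\mathcal{R}_\theta^{c_0}$ lands on the $z_0$-side at $c_0$, the landing point $z(c)$ must lie on the $z_0$-side for all nearby $c$, contradicting $z(c_j)\to w$ on the opposite side. Hence every subsequential limit equals $z_0$, completing the argument. The crux is therefore the conversion of the purely topological continuity statement into a separation statement that can be verified by combining the orbit separation lemmas with the perturbation-stability of repelling ray pairs.
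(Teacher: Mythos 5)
Your overall architecture is the right one (the paper itself only defers to \cite[Proposition~5.1]{S1a}): reduce to continuity at the finitely many parameters whose parabolic orbit has ray period $n$ (Lemma~\ref{l:parcount}), note that subsequential limits $w$ of the landing points are roots of $f_{c_0}^{\circ n}(z)-z$, and exclude $w\neq z_0$ by a perturbation-stable separating ray pair. The open repelling stratum and the case where $w$ lies on the parabolic cycle are handled correctly, the latter by Lemma~\ref{Lem:OrbitSeparationParabolic} together with the stability of repelling ray pairs and of the induced partition (\cite[Lemma~B.1]{GM1}, \cite[Lemma~2.2]{S1a}), exactly as the paper does in the proof of Theorem~\ref{t:perchar}.

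The gap is the parenthetical claim that ``a separating repelling ray pair is likewise available'' when $w$ is a repelling periodic point of $f_{c_0}$ off the parabolic cycle. Neither Lemma~\ref{Lem:OrbitSeparationSuperattr} (super-attracting parameter, two points on one orbit) nor Lemma~\ref{Lem:OrbitSeparationParabolic} (two points of the parabolic cycle) separates the parabolic point $z_0$ from an arbitrary repelling periodic point, and such a ray pair need not exist at all: for $d=2$, $c_0=1/4$ the filled Julia set is a closed Jordan domain with no cut points, so there is no separating ray pair of any kind. (That example is harmless only because $f_{1/4}$ has a unique fixed point --- an accident your argument cannot invoke.) This case needs a different mechanism, available from the paper's toolkit. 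Since $w$ is repelling, the implicit function theorem gives a unique root $w(c)$ of $f_c^{\circ n}(z)-z$ near $w$, so $z(c_j)=w(c_j)$ for large $j$. Choose a periodic angle $\eta$ whose ray lands at $w$ for $f_{c_0}$; by \cite[Lemma~B.1]{GM1} it lands at $w(c_j)$ as well, so $\theta$ and $\eta$ lie in a common class of a non-trivial orbit portrait realized by $f_{c_j}$. If at least two rays already land at $w$ for $f_{c_0}$, this violates maximality (Corollary~\ref{maximal}). Otherwise, after passing to a subsequence the portrait is a fixed $\mathcal{P}$, Lemma~\ref{c:wake} places every $c_j$, hence $c_0$, in $\overline{\mathcal{W}_{\mathcal{P}}}$, and then either $c_0\in\mathcal{W}_{\mathcal{P}}$ (so $\mathcal{R}_\theta^{c_0}$ lands at a repelling orbit, contradiction) or $c_0$ is the landing point of the parameter rays at the characteristic angles of $\mathcal{P}$, in which case Theorem~\ref{t:perchar} and Corollary~\ref{Lem:Characteristic angles} force $\mathcal{R}_\eta^{c_0}$ to land on the parabolic orbit rather than at $w$ --- again a contradiction. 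Without some such argument the repelling-limit case of your step 3 is unproven.
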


\emph{Remark.}
Although the landing point of the dynamical ray depends continuously on
the parameter $c$ (if the ray lands), the portrait may be
destroyed. This is certainly always the case whenever the orbits of the landing points have different periods. For example it may occur that $z(c)$ splits into several periodic points while perturbing away from a parabolic parameter
, among which the rays of the parabolic point are distributed.

\begin{proof}
The proof is analogous to the one in the quadratic case, see \cite[Proposition 5.1]{S1a}.
\end{proof}

\begin{corollary}[Stability of Portraits in a Hyperbolic Component
and its Roots and Co-Roots]
\label{c:phe}
Let~$H$ be a hyperbolic component and~$E$ be the set of all roots and
co-roots of~$H$. If $c_0 \in H\cup E$ and $z_0$ be a
repelling or parabolic periodic point of $f_{c_0}$, then there is a
continuous map $z \colon H\cup E \to \mathbb{C}$ such that $z(c_0)=z_0$ and the
portrait of the orbit of $z(c)$ is the same for all $c\in H\cup E$.
\end{corollary}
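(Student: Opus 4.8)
The plan is to continue the point $z_0$ by following a single dynamical ray and then to show, via a connectedness argument, that the full orbit portrait cannot change across $H\cup E$. Since $z_0$ is repelling or parabolic, by Lemma~\ref{Lem:Landing of Dyn Rays} there is a periodic angle $\theta_0$ whose dynamical ray $\mathcal{R}_{\theta_0}^{c_0}$ lands at $z_0$. For any periodic angle $t$ and any $c\in\mathcal{M}_d$, the same lemma forbids the second (non-landing) exceptional case, so $\mathcal{R}_t^{c}$ lands throughout $\mathcal{M}_d$; in particular $H\cup E\subseteq\Omega(\theta_0)$. Restricting the continuous landing-point function provided by Theorem~\ref{t:cpl} therefore yields a continuous $z\colon H\cup E\to\mathbb{C}$ with $z(c_0)=z_0$, where $z(c)$ is the landing point of $\mathcal{R}_{\theta_0}^{c}$. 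By the converse part of Lemma~\ref{Lem:Landing of Dyn Rays}, $z(c)$ is repelling or parabolic for every $c$, and it is automatically repelling when $c\in H$, since a hyperbolic component contains no parabolic parameters.

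To prove that the portrait is constant, I would fix an arbitrary periodic angle $\theta'$ and set $S_{\theta'}:=\{c\in H\cup E:\mathcal{R}_{\theta'}^{c}\text{ lands at }z(c)\}$. Writing $z_{\theta'}(c)$ for the (continuous) landing point of $\mathcal{R}_{\theta'}^{c}$, we have $S_{\theta'}=\{c: z_{\theta'}(c)=z(c)\}$, which is closed in $H\cup E$ by continuity. The set $H\cup E$ is connected: $E\subseteq\partial H$ by definition, so $H\subseteq H\cup E\subseteq\overline H$ with $H$ connected and dense. Hence it suffices to show that each $S_{\theta'}$ is also open; it is then empty or all of $H\cup E$, and comparing with the value at $c_0$ shows that exactly the rays landing at $z_0$ land at $z(c)$ for every $c$. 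By equivariance this fixes the angle set at each point of the orbit, and (when the portrait is non-trivial) maximality, Lemma~\ref{maximal}, guarantees there is no larger portrait containing it; so the portrait of the orbit of $z(c)$ is the same for all $c\in H\cup E$.

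The heart of the matter is the openness of $S_{\theta'}$, and this is where I expect the main difficulty. At a point $c\in S_{\theta'}\cap H$ the landing point $z(c)$ is repelling and both $\mathcal{R}_{\theta_0}^c$ and $\mathcal{R}_{\theta'}^c$ land at it; by the stability of rays landing at a repelling periodic point (\cite[Lemma~B.1]{GM1}, as already used in Lemma~\ref{l:preserving-portrait1}) the two rays keep co-landing at the holomorphic continuation of $z(c)$ throughout a neighborhood, which lies in the open set $H$. The delicate case is $c_1\in S_{\theta'}\cap E$, where $c_1$ is parabolic. Parabolic parameters are isolated (Lemma~\ref{l:parcount}), so for a small disc $V$ around $c_1$ we have $V\cap(H\cup E)=(V\cap H)\cup\{c_1\}$, and it remains to see that $\mathcal{R}_{\theta'}^{c}$ lands at $z(c)$ for $c\in V\cap H$. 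If $z(c_1)$ is repelling this follows exactly as before. If $z(c_1)$ lies on the parabolic orbit, the obstacle is to track the rays through the bifurcation: as $c$ enters $H$ the parabolic orbit splits (Lemma~\ref{l:rph}) into the attracting orbit of $H$ together with a repelling orbit, and because dynamical rays land only on the Julia set, none of them can follow the attracting part. I would argue that the rays $\mathcal{R}_{\theta_0}^{c_1}$ and $\mathcal{R}_{\theta'}^{c_1}$, which co-land at the parabolic point, must therefore both follow the repelling continuation $z(c)$ and so continue to co-land; the precise statement that exactly the rays landing at $z(c_1)$ survive at the nearby repelling orbit is the content of the final assertion of Lemma~\ref{l:preserving-portrait1} about parabolic parameters on the boundary of the stability region. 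Establishing this openness at the parabolic parameters of $E$ is the crux; once it is in place, the clopen argument of the previous paragraph finishes the proof.
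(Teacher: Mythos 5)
Your argument is correct and essentially the same as the paper's: the paper also reduces the claim to the persistence of co-landing of periodic ray pairs over $H\cup E$ (via the continuous landing-point functions of Theorem~\ref{t:cpl}), using local stability at repelling points propagated by Lemma~\ref{l:preserving-portrait1}, and, when $z_0$ is parabolic, the observation from Lemma~\ref{l:rph} that only a $k$-periodic and an $n$-periodic orbit coalesce there, so the rays --- which cannot land on the attracting branch --- must all follow the unique repelling continuation. The only slip is a citation: the final clause of Lemma~\ref{l:preserving-portrait1} transfers co-landing \emph{from} the open region \emph{to} a parabolic boundary parameter, not in the converse direction you need for openness at $c_1\in E$; but the bifurcation argument you sketch in its place is precisely the paper's justification, so this is a misattribution rather than a gap.
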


\begin{proof}
Note that for all $c \in H\cup E$, all periodic dynamical rays
land. Hence, their landing points depend continuously on the
parameter by Theorem \ref{t:cpl}. In order to prove the corollary, it is enough to show the following: if the dynamical rays, say~$\mathcal{R}_\theta^{c_0}$ and~$\mathcal{R}_{\theta'}^{c_0}$ land at~$z_0$ for some $c_0 \in
H \cup E$, then ~$\mathcal{R}_\theta^c$ and~$\mathcal{R}_{\theta'}^c$ land together for all~$c\in H \cup E$.

Let $z(c)$ and $z^{\prime}(c)$ be two continuous functions on $H \cup E$ such that $z(c_0) = z^{\prime}(c_0) = z_0$ and $z(c)$ (resp. $z^{\prime}(c)$) is the landing point of $\mathcal{R}_\theta^c$ (resp. $\mathcal{R}_{\theta'}^c$). 

If $z_0$ is a repelling point, then $\mathcal{R}_\theta^{c}$ and~$\mathcal{R}_{\theta'}^{c}$ continue to land together at repelling periodic points for all $c$ close to $c_0$. By Lemma \ref{l:preserving-portrait1}, they co-land throughout $H$; i.e. $z(c) = z^{\prime}(c)$ for all $c$ in $H$. By continuity, $z(c) = z^{\prime}(c)$ for all $c$ in $E$ and we are done.

Now suppose that $z_0$ is a parabolic point. Let~$k$ be the orbit period and~$n$ the ray period of~$z_0$. By Lemma~\ref{l:rph}, points of a~$k$-periodic and an~$n$-periodic orbit coalesce at~$z_0$ and no further orbits are involved. Since one of them is attracting (namely the~$n$-periodic orbit in the non-primitive case), $z(c)$ and $z^{\prime}(c)$ always have period~$k$. Since there is only one orbit of period $k$ available, it follows that $z(c) = z^{\prime}(c)$ for $c$ close to $c_0$ with $c \in H$. As above, Lemma \ref{l:preserving-portrait1} implies that $z(c) = z^{\prime}(c)$ for $c \in H$. By continuity, the same holds at the roots and co-roots. Therefore, $\mathcal{R}_\theta^c$ and~$\mathcal{R}_{\theta'}^c$ land together for all~$c\in H \cup E$.
\end{proof}

\begin{definition}[Multiplier Map of a Hyperbolic Component]
Let~$H$ be a hyperbolic component and for~$c \in H$
let~$\lambda(c,\mathcal{O})$ be the multiplier of the unique attracting orbit
of~$f_c$. Then the map $\lambda_H \colon H \to \mathbb{D}, \hspace{1mm} c \mapsto \lambda(c,\mathcal{O})$ is called the \emph{multiplier map
of~$H$}.
\end{definition}

\noindent The multiplier map~$\lambda_H$ is well-defined: for
hyperbolic parameters there is a unique attracting orbit and the
absolute value of the multiplier of an attracting orbit is less than
one. Precisely as in the quadratic case we see that the multiplier
map~$\lambda_{H}$ of a hyperbolic component~$H$ is a proper
holomorphic map and has a continuous extension~$\lambda_{\overline{H}}$ from~$\overline{H}$ onto~$\overline{\mathbb{D}}$.

\begin{lemma}
Let~$H$ be a hyperbolic component of period $n$ with center~$c_0$ and set of roots and co-roots $E$ (a subset of $\partial H$). For any $c \in H \cup E$, there are exactly $(d-1)$ points on the boundary of the characteristic Fatou component $U_1$ of $f_c$ which are fixed by the first return map of $U_1$. All these periodic points are repelling or parabolic.
\end{lemma}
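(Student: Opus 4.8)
The plan is to realise the first return map of $U_1$ as a finite Blaschke product and count its boundary fixed points. Fix $c\in H\cup E$ and let $U_1$ be the Fatou component containing the critical value, with cycle $U_1\to U_2\to\cdots\to U_m\to U_1$ of period $m$. Exactly one component of this cycle (the one mapped onto $U_1$) contains the critical point $0$, since $f_c(0)=c\in U_1$ and $f_c$ has a single critical point; hence the first return map $g:=f_c^{\circ m}\colon \overline{U_1}\to\overline{U_1}$ is a proper holomorphic map of degree $d$. Since $c$ is hyperbolic or parabolic, $J(f_c)$ is locally connected, so $\partial U_1$ is a Jordan curve and the Riemann map $\phi_{U_1}\colon U_1\to\mathbb{D}$ extends to a homeomorphism $\overline{U_1}\to\overline{\mathbb{D}}$. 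Conjugating by $\phi_{U_1}$ turns $g$ into a proper degree $d$ self-map of $\mathbb{D}$, that is, a finite Blaschke product $B$, and the points of $\partial U_1$ fixed by $g$ correspond precisely to the fixed points of $B$ on $\partial\mathbb{D}$. As $\partial U_1\subset J(f_c)$, every such point is repelling except possibly a point of the parabolic orbit; this already yields the last assertion, so it remains to prove that $B$ has exactly $d-1$ fixed points on $\partial\mathbb{D}$.

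I would count these using the reflection symmetry $B\circ\sigma=\sigma\circ B$, where $\sigma(z)=1/\bar z$, together with the fact that $B$, being a degree $d$ rational map, has $d+1$ fixed points in $\hat{\mathbb{C}}$ counted with multiplicity. For $c\in H$ the image under $\phi_{U_1}$ of the attracting orbit point in $U_1$ is an attracting fixed point $p\in\mathbb{D}$; it is the unique fixed point of $B$ in $\mathbb{D}$ (the Denjoy--Wolff point), and its reflection $\sigma(p)$ is the unique fixed point in the exterior. Hence the remaining $d-1$ fixed points lie on $\partial\mathbb{D}$; they are simple and, since the boundary map is expanding, repelling. Transported back by $\phi_{U_1}$ they give exactly $d-1$ repelling periodic points on $\partial U_1$, which settles the hyperbolic case.

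The delicate case, and the main obstacle, is $c\in E$. Here the point $w$ of the parabolic orbit lying on $\partial U_1$ is fixed by $g$ with multiplier $1$, and by the petal analysis in Lemma~\ref{l:rph} the component $U_1$ carries exactly one attracting petal at $w$ (even at a satellite root one checks that the critical value component still returns after $m=n$ steps and contains a single petal). Thus $B$ has a parabolic fixed point $q=\phi_{U_1}(w)$ on $\partial\mathbb{D}$ with one petal inside $\mathbb{D}$; by the symmetry $\sigma$ there is a mirror petal in the exterior, so $q$ has two petals and therefore multiplicity $2+1=3$ as a fixed point of the rational map $B$. Since the Denjoy--Wolff point now lies on the boundary, $B$ has no fixed point in $\mathbb{D}$, and by symmetry none in the exterior, so all $d+1$ fixed points lie on $\partial\mathbb{D}$. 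Subtracting the contribution $3$ of $q$ leaves $d-2$ further fixed points; these are simple and repelling because the parabolic orbit is unique (the parameter has exact ray period $n$). Counting distinct points gives $1+(d-2)=d-1$, with $d-2$ repelling points and the single parabolic point $w$.

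I expect the genuine work to be the petal bookkeeping at $w$: namely verifying that $U_1$ contains precisely one petal (including the satellite-root situation) and the resulting multiplicity $3$ of $q$. An alternative route that sidesteps this multiplicity computation is to establish the count at the center $c_0$ of $H$, where $g$ is conjugate to $\zeta\mapsto\zeta^d$ on $\overline{\mathbb{D}}$ and hence manifestly has $d-1$ fixed points on $\partial\mathbb{D}$, and then transport the count to all of $H\cup E$ using the stability of portraits provided by Corollary~\ref{c:phe}, which shows that the co-landing pattern of the periodic rays on $\partial U_1$ is constant along $H\cup E$.
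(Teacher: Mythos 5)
Your proposal is correct and follows essentially the same route as the paper: conjugate the first return map of $U_1$ to a degree $d$ Blaschke product via the boundary extension of the Riemann map, then count the $d+1$ fixed points using the reflection symmetry, with one interior/exterior pair in the attracting case and a multiplicity-$3$ parabolic boundary fixed point (two petals) in the parabolic case, leaving $d-1$ distinct boundary fixed points either way. The alternative "transport from the center" route you sketch at the end is not what the paper does, but the main argument matches the paper's proof step for step.
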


\begin{proof}
Note that for all $c \in H \cup E$, the Julia set $J(f_c)$ is locally connected. Since $U_1$ is simply connected, there exists a Riemann map $\phi \colon U_1 \to \mathbb{D}$ which extends to a homeomorphism of the closures (this is implied by the local connectivity of $J(f_c)$ and the fact that the filled-in Julia set of $f_c$ is full). Then $\phi$ conjugates the first return map $f_{c}^{\circ n}$ of $U_1$ to a proper degree $d$ holomorphic self-map of $\mathbb{D}$, hence a Blaschke product of degree $d$, say $\mathcal{B}_c$. As conjugate dynamical systems have the same number of fixed points, the number of points on the boundary of $U_1$ which are fixed by the first return map of $U_1$ is equal to the number of fixed points of $\mathcal{B}_c$ on $\partial \mathbb{D}$.

In the (super-)attracting case, there is exactly one fixed point of $\mathcal{B}_c$ in $\mathbb{D}$ and by reflection, exactly one fixed point in $\mathbb{C} \setminus \overline{\mathbb{D}}$. Since a rational map of degree $d$ has $(d+1)$ fixed points counted with multiplicity, there must be $(d-1)$ fixed points (counted with multiplicity) on $\partial \mathbb{D}$. Since these fixed points are never parabolic, each of them has multiplicity 1; i.e. there are exactly $(d-1)$ fixed points of $\mathcal{B}_c$ on $\partial \mathbb{D}$. In the parabolic case, there are no fixed points of $\mathcal{B}_c$ in $\mathbb{D} \cup \left( \mathbb{C} \setminus \overline{\mathbb{D}} \right)$. So all the  
$(d+1)$ fixed points lie on $\partial \mathbb{D}$. $\mathcal{B}_c$ has a parabolic fixed point on $\partial \mathbb{D}$ and the Julia set is all of $\partial \mathbb{D}$. Clearly, there are two attracting petals; i.e. the parabolic fixed point has multiplicity 3 and there are exactly $(d-2)$ simple fixed points, all distinct. Hence, the total number of distinct fixed points of $\mathcal{B}_c$ on $\partial \mathbb{D}$ is again $(d-1)$. 

Therefore, $U_1$ has exactly $(d-1)$ points on its boundary which are fixed by its first return map. Since there can be only one non-repelling periodic orbit of~$c$, these periodic points are either all repelling or exactly one of them is parabolic.
\end{proof}

\begin{lemma}[On the Boundary of the Characteristic Fatou Component]
\label{l:brycf}
Let~$H$ be a hyperbolic component of period $n$ with center~$c_0$ and set of roots and co-roots $E \left( \subset \partial H \right)$. There are $(d-1)$ continuous functions $z^{(1)},\cdots,z^{(d-1)}$ on $H \cup E$ such that for any $c \in H \cup E$, $\{ z^{(1)}(c),\cdots,z^{(d-1)}(c)\}$ are precisely the~$\left( d-1 \right)$ points on the boundary of the characteristic Fatou component $U_1$ which are fixed by the first return map of $U_1$. At exactly one of them, more than one dynamical rays land. Moreover, at every~$c \in E$, one of the~$z^{(i)}(c)$'s is the characteristic point of
the parabolic orbit.
\end{lemma}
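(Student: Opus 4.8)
The plan is to build the $d-1$ functions $z^{(i)}$ by analytic continuation from the center and then read off statements (2) and (3) from the combinatorics at the center and at the parabolic parameters in $E$.

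First I would anchor everything at the center $c_0$. There the first return map of $U_1$ is conformally conjugate, via the Riemann map extended to $\overline{U_1}$, to $w\mapsto w^d$ on $\overline{\mathbb D}$, whose $d-1$ fixed points on $\partial\mathbb D$ are the $(d-1)$-st roots of unity; these correspond to $d-1$ distinct repelling periodic points $p_1,\dots,p_{d-1}$ on $\partial U_1$. Applying Corollary \ref{c:phe} to each $p_i$ produces a continuous function $z^{(i)}\colon H\cup E\to\mathbb C$ with $z^{(i)}(c_0)=p_i$ whose orbit portrait is constant on $H\cup E$; in particular the period (a divisor of $n$) is constant, so each $z^{(i)}(c)$ is fixed by the first return map of $U_1$. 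To see that the $z^{(i)}(c)$ are exactly the $d-1$ boundary points produced by the previous lemma, I would use that that lemma guarantees precisely $d-1$ such points for every $c\in H\cup E$, that the boundary set $S(c)$ of these points varies continuously in $c$ (continuity of the Riemann map $\phi_c\colon\overline{U_1}\to\overline{\mathbb D}$, which holds throughout $H\cup E$ since $J(f_c)$ is locally connected there), and that the $z^{(i)}$ are continuous selections agreeing with $S(c)$ at $c_0$; a standard open-and-closed argument on the connected set $H\cup E$ then forces $\{z^{(1)}(c),\dots,z^{(d-1)}(c)\}=S(c)$ and keeps the points distinct. This establishes statement (1).

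Because the full orbit portrait of each $z^{(i)}$ is constant on $H\cup E$, the number of dynamical rays landing at $z^{(i)}(c)$ is independent of $c$, so for statement (2) it suffices to prove that exactly one of the $p_i$ carries at least two rays at the center (here I use $c_0\neq 0$, i.e.\ $n\ge 2$; the period-one component is degenerate and excluded). By Lemma \ref{l:tree2} the Hubbard tree $\Gamma$ meets $\partial U_1$ in a single point $q$, which is periodic; since $\Gamma$ is forward invariant and the first return map sends $\partial U_1$ to itself, $f_{c_0}^{\circ n}(q)\in\Gamma\cap\partial U_1=\{q\}$, so $q$ is one of the $p_i$. As $q$ is a repelling periodic cut point of the locally connected set $K_{c_0}$ (the tree continues on both sides of $q$: into $U_1$ toward the critical value and outward toward the critical point $0$), at least two external rays land at $q$. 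For the converse I would show that any $p_i$ carrying two rays must coincide with $q$: by Lemma \ref{CharPoint} the two characteristic rays of the orbit of $p_i$ separate the critical value from every other point of that orbit, so if $p_i$ were not itself the characteristic point it would lie on the critical-point side of those rays; but $p_i\in\partial U_1$ while the critical value lies in $U_1$, and $U_1$ is connected and disjoint from all rays, forcing $p_i$ onto the critical-value side, a contradiction. Hence $p_i$ is the characteristic point of its orbit, its rays separate the critical value from the critical point, $p_i$ lies on every arc joining them and in particular on the Hubbard-tree arc, whence $p_i\in\Gamma\cap\partial U_1=\{q\}$. Thus exactly one of the $z^{(i)}$ carries more than one ray.

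Statement (3) is then almost immediate: at a root or co-root $c\in E$ the characteristic point of the parabolic orbit lies on $\partial U_1$ by definition, and its orbit period $k$ divides $n$, so it is fixed by the first return map of $U_1$ and therefore equals one of the $z^{(i)}(c)$. The main obstacle I anticipate is the bookkeeping in statement (1): verifying that the continuations furnished by Corollary \ref{c:phe} remain locked onto the set $S(c)$ of first-return fixed points of $\partial U_1$ throughout $H\cup E$, which rests on the continuous dependence of the boundary Riemann map on the parameter. By contrast the count in statement (2) is clean once one observes, via Lemma \ref{CharPoint}, that a boundary fixed point can only carry several rays by being the characteristic point of its orbit, which pins it to the unique Hubbard-tree point $q$.
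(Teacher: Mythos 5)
Your proposal is essentially correct and shares the paper's overall architecture --- continue the $d-1$ boundary fixed points from the center via Corollary \ref{c:phe}, use constancy of orbit portraits to reduce the ray count to the super-attracting parameter, and read off the parabolic statement at $E$ --- but two of your key steps are genuinely different from the paper's. For the ``exactly one'' count, the paper argues by contradiction with a curve $\gamma$ from the critical value to the center of a Fatou component $U'$ allegedly separated from $U_1$ by a second cut point $z_0^{(i)}$, and derives a contradiction from the fact that the forward iterates of $\gamma$ eventually lie on the Hubbard tree and hence miss the periodic point $z_0^{(i)}$. You instead invoke Lemma \ref{CharPoint}: a boundary fixed point of $U_1$ carrying two rays must be the characteristic point of its own orbit (since $\overline{U_1}$ lies in the closure of the critical-value side of the characteristic ray pair), and therefore lies on the regular arc $[0,c_0]$, pinning it to the unique point of $\Gamma\cap\partial U_1$ from Lemma \ref{l:tree2}. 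This is a clean and arguably more direct argument, and it buys a uniform treatment of existence and uniqueness through the separation property. For statement (3), the paper follows the attracting periodic point $z^\ast$ from the center into $E$ and uses properness of the multiplier map to identify its limit with the characteristic parabolic point, whereas you argue directly from the definition of the characteristic parabolic point and the divisibility $k\mid n$; both work. Two small caveats: (i) the lemma as stated does not exclude $n=1$, which you dismiss as degenerate --- the paper handles it by the convention that the angles $0$ and $1$ count as two distinct rays landing at the fixed point of $z^d$, and this convention is actually used later (Theorem \ref{t:exactlytwo}), so it should not simply be discarded; (ii) your open-and-closed argument identifying $\{z^{(i)}(c)\}$ with the set of boundary fixed points rests on continuous dependence of the boundary Riemann map of $U_1$ on $c$ up to and including the parabolic parameters in $E$, which is nontrivial --- though the paper is no more explicit at this point, and you correctly flag it as the main gap to be filled.
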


\begin{proof}
The super-attracting point $c_0$ in the dynamical plane of $f_{c_0}$ can be holomorphically followed throughout $H$ yielding an analytic function $z^{\ast} \colon H \to \mathbb{C}$ with ~$z^\ast(c_0)=c_0$ and $z^{\ast}(c)$ periodic of period $n$ for all $c$ in $H$. $z^{\ast}$ can be extended to a continuous function on~$H \cup E$ and since the multiplier map~$\lambda_{\overline{H}}$ is
proper holomorphic on~$\overline H$, $z^\ast(c)$ must have multiplier 1 for every~$c\in E$. Also, $z^\ast(c)$ lies in the closure of the critical value Fatou component for every $c \in H \cup E$, so it must be the characteristic point of the parabolic orbit for $c \in E$. 

Let the $(d-1)$ points on the boundary of the characteristic Fatou component $U_1$ of $f_{c_0}$ which are fixed by the first return map of $U_1$ be $z_0^{(1)},\ldots,z_0^{(d-1)}$. Since the~$z_0^{(i)}$'s are repelling, by Corollary~\ref{l:preserving-portrait1} and \ref{c:phe} there are continuous functions~$z^{(i)}$ on~$H \cup E$ with~$z^{(i)}(c_0)=z_0^{(i)}$ for~$1 \leq i \leq d-1$ and the~$z^{(i)}(c)$'s
are repelling for~$c \in H$ such that for any fixed $i$, the portrait of the orbit of $z^{(i)}(c)$ remains constant for all $c \in H \cup E$. Also, each $z^{(i)}(c)$ lies on the boundary of the characteristic Fatou component of $f_{c}$ and is fixed by the first return map of the component. Since~$z^\ast(c)$ has period $n$ and lies on the boundary of the characteristic Fatou component of~$c$ for~$c \in E$, it must be one of the points~$z^{(i)}(c)$'s.

Finally we show that exactly one of the $z^{(i)}(c)$'s is the landing point of more than one rays. We first prove the super-attracting case following the proof of \cite[Lemma 3.4]{NS}: without restriction we assume~$n>1$ (If~$n=1,$ the only parameter with a super-attracting fixed point is $0$ and the dynamical rays at angles~$0$ and~$1$, which we consider as two different rays in this case, land trivially at a common point in the dynamical plane of $z^d$). One of the~$z_0^{(i)}$'s, say~$z_0^{(1)}$, lies on the Hubbard tree~$\Gamma$ of~$c_0$ and it is the only point of $\Gamma\cap\overline{U_1}$ by Lemma~\ref{l:tree2}. Therefore,~$z_0^{(1)}$ disconnects~$\partial K_c$ and is the landing point of at least two dynamical rays. If there is a second $z_0^{(i)}$ $(i \neq 1)$ which disconnects the filled-in Julia set of $c_0$, then there must be a periodic or pre-periodic Fatou component $U^{\prime}$ which is separated from the characteristic Fatou component by $z_0^{(i)}$. Let $\gamma$ be an injective curve in the filled-in Julia set which connects the critical value to the center of $U^{\prime}$ (that
is the unique point of $U^{\prime}$ which maps onto the critical orbit); $\gamma$ becomes unique if we require that it maps onto the Hubbard tree by the time that $U^{\prime}$ lands on a periodic component. From then on, all forward iterates of $\gamma$ will be on the Hubbard tree, so they will not meet $z_0^{(i)}$; this is in contradiction to periodicity of $z_0^{(i)}$ and shows that exactly one of the $z_0^{(i)}$s can disconnect the filled-in Julia set of $c_0$; i.e. exactly one of them (say $z_0^{(1)}$) is the landing point of more than one rays. By Corollary \ref{c:phe}, only $z^{(1)}(c)$ must be the landing point of more than one dynamical rays for each $c \in H \cup E$.
\end{proof}

\begin{theorem}[Mapping Degree of~$\lambda_{\overline{H}}$]
\label{t:hmd}
The multiplier map~$\lambda_{\overline{H}}$ has mapping degree~$d-1$ and every hyperbolic component $H$ of period greater than one has exactly one root.
\end{theorem}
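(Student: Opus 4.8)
The plan is to study the proper holomorphic multiplier map $\lambda_H\colon H\to\mathbb{D}$ together with its continuous boundary extension $\lambda_{\overline H}\colon\overline H\to\overline{\mathbb{D}}$, and to read off its degree $D$ from two distinguished fibres. The fibre $\lambda_H^{-1}(0)$ is the set of centers of $H$, while $\lambda_{\overline H}^{-1}(1)$ is exactly the set $E$ of roots and co-roots: a boundary parameter has $\lambda_{\overline H}=1$ precisely when the period-$n$ orbit degenerates to a parabolic orbit of multiplier $1$ and exact ray period $n$, which is the defining property of a root (essential) or co-root (non-essential). Since the degree of a proper map equals the total local degree over any fibre, I would compute $D$ from these two fibres and extract the count of roots along the way.

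For the lower bound $D\ge d-1$ I would use a center. Surjectivity of $\lambda_{\overline H}$ together with $|\lambda|\to 1$ towards $\partial H$ produces a center $c_\ast\in H$. Writing $z_1(c)$ for the point of the attracting orbit near the critical point, the implicit function theorem (valid since $(f_{c_\ast}^{\circ n})'(0)-1=-1\neq 0$) makes $z_1$ a holomorphic function with $z_1(c_\ast)=0$ and a simple zero, and because $f_c'(z)=dz^{d-1}$ vanishes to order $d-1$ at the critical point, the chain rule gives $\lambda_H(c)=(f_c^{\circ n})'(z_1(c))\sim\mathrm{const}\cdot(c-c_\ast)^{d-1}$. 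Thus the local degree of $\lambda_H$ at every center is $d-1$, and in particular $D\ge d-1$.

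For the upper bound and the root count I would exploit Lemma~\ref{l:brycf}. As $c\to c_0\in E$ the center function $z^\ast(c)$ converges to the characteristic parabolic point, which by that lemma coincides with exactly one of the $d-1$ boundary fixed points $z^{(1)}(c),\dots,z^{(d-1)}(c)$ of the first return map; this defines an attaching map $j\colon E\to\{1,\dots,d-1\}$. By Corollary~\ref{c:phe} the portrait of the orbit of each $z^{(j)}$ is constant over $H\cup E$, so the angles landing at $z^{(j)}$ form a fixed set $\Theta_j$. For co-roots and primitive roots, Corollary~\ref{c:nonesspar} identifies $c_0$ as the common landing point of the parameter rays at its characteristic angles, which lie in $\Theta_{j(c_0)}$; for satellite roots the same follows from Lemma~\ref{At least two rays} and Corollary~\ref{Lem:Characteristic angles}. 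Hence $c_0$ is determined by $j(c_0)$, so $j$ is injective and $|E|\le d-1$. Granting that each point of $E$ is a local-degree-one preimage of $1$, the fibre identity gives $D=|E|\le d-1$; combined with $D\ge d-1$ this forces $D=d-1$ and makes $j$ a bijection. Since by Lemma~\ref{l:brycf} exactly one index, say $1$, is disconnecting, a parameter $c_0\in E$ is a root if and only if $j(c_0)=1$, and bijectivity of $j$ yields exactly one root (with the remaining $d-2$ points being co-roots).

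The hard part will be the boundary fibre count at the value $1$: one must justify that the total local degree of $\lambda_{\overline H}$ over $1$ equals $D$ and that each root and co-root contributes local degree exactly $1$. The latter is a transversality statement for the multiplier — in the non-primitive case the holomorphic multiplier extension from Lemma~\ref{l:rph} must have a simple $1$-point, and in the primitive case the two-sheeted cover there must unfold the cusp so that $\lambda_{\overline H}$ remains a local homeomorphism of boundary arcs. Since $\partial H$ need not be a Jordan curve, I would argue the fibre identity through the proper-map degree (as a limit of interior fibres over values tending to $1$) rather than through a naive circle map. By comparison, the center computation and the injectivity of $j$ are routine once the earlier lemmas are in place.
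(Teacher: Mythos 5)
Your proposal follows the paper's proof of Theorem~\ref{t:hmd} essentially step for step: the lower bound comes from the factorization $\lambda_{\overline{H}}(c)=d^n(c-c_0)^{d-1}g(c)^{d-1}$ at a center; the upper bound comes from assigning to each root and co-root the unique boundary fixed point $z^{(i)}$ of Lemma~\ref{l:brycf} that becomes the characteristic parabolic point, with injectivity of that assignment forced by Lemma~\ref{At least two rays} and Corollary~\ref{c:nonesspar} exactly as in the paper; and the root count is read off from the fact that exactly one $z^{(i)}$ disconnects the filled-in Julia set, with portraits held constant by Corollary~\ref{c:phe}. The only point of divergence is the bridge between $\deg\lambda_{\overline{H}}$ and $|E|$. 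You ask for the full boundary fibre identity over the value $1$ with local degree exactly one at each root and co-root, and you flag it as unproven; note that the argument only needs the inequality $\deg\lambda_{\overline{H}}\leq|E|$, which combines with $|E|\leq d-1\leq\deg\lambda_{\overline{H}}$ to close the loop, and that this inequality is precisely the step the paper itself disposes of in one unargued sentence (``the mapping degree is bounded above by the total number of roots and co-roots''), implicitly via the claim that the $d-1$ internal rays at angle $0$ land at pairwise distinct points of $E$. So your write-up is not less complete than the printed proof; it is merely more explicit about which transversality statement for the multiplier near a parabolic parameter (the content of Lemma~\ref{l:rph}) is being taken for granted, and your sketch of how to supply it is the right one.
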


\begin{proof}
Let~$H$ be a hyperbolic component with period~$n$. The mapping
degree of~$\lambda_{\overline H}$ is at least~$d-1$: for a
center~$c_0$ of~$H$ there is a holomorphic function~$z(c)$ on $H$ such
that~$z(c_0)=0$ and we can locally write $\displaystyle$ $\lambda_{\overline{H}}(c)$ $ = d^n$ $f_c^{\circ(n-1)}$ $\left(z(c)\right)^{d-1}$ $\cdots
f_c\left(z(c)\right)^{d-1}$ $z(c)^{d-1}$. Since~$z(c)$ has the only
zero at~$c_0$, it follows that~$\displaystyle$ $\lambda_{\overline{H}}(c)=d^n\left(c-c_0\right)^{d-1}g(c)^{d-1}$ for some holomorphic function~$g$
that does not vanish in a neighborhood of~$c_0$, and~$\lambda_{\overline{H}}$ has mapping
degree at least ~$d-1$.

Note that the map $\lambda_{\overline{H}}$ takes the value $1$ precisely at the roots and co-roots of $H$. Therefore, the mapping degree of $\lambda_{\overline{H}}$ is bounded above by the total number of roots and co-roots. By the previous Lemma~\ref{l:brycf}, exactly one of the $z^{(i)}$'s becomes the characteristic point of the parabolic orbit at each root or co-root. Fix $ i \in \{ 1,2,\cdots,d-1 \}$; we show that there exists a unique $c \in E$ for which $z^{(i)}(c)$ is the characteristic point of the parabolic orbit. Suppose there was another $c^{\prime}$ with the same property. Let the orbit portrait of the orbit of $z^{(i)}$ be $\mathcal{P}_i$ which remains constant throughout $H \cup E$ and let $\theta$ be a characteristic angle of $\mathcal{P}_i$ (if $\mathcal{P}_i$ is trivial, then $\theta$ is the angle of the only dynamical ray landing at the characteristic point $z^{(i)}$). By Lemma \ref{At least two rays} and Corollary \ref{c:nonesspar}, the parameter ray at angle $\theta$ lands both at $c$ and at $c^{\prime}$; a contradiction which proves our claim. Therefore, there are only~$d-1$ candidates for parabolic parameters with ray period~$n$ on the boundary of $H$ implying that the total number of roots and co-roots of $H$ is at most $d-1$. Thus, the mapping degree of~$\lambda_{\overline{H}}$ is at
most~$d-1$ and hence precisely~$d-1$.

Moreover, this shows that all candidates for characteristic points are
realized. Since portraits are stable for all parameters in~$H \cup E$ (Corollary~\ref{c:phe}) we obtain by Lemma~\ref{l:brycf} that exactly one parameter in $E$ has a parabolic orbit with a non-trivial orbit portrait (when the period is different from one). This proves that each hyperbolic component of period different from one has exactly one root.
\end{proof}

\begin{corollary}[Number of Co-Roots]
\label{c:par+hyp}
Every hyperbolic component of period greater than one has exactly~$d-2$ co-roots.
\end{corollary}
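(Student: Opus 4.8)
The plan is to read off the count directly from Theorem~\ref{t:hmd} and its proof, together with Lemma~\ref{l:brycf}, so that very little new argument is needed. First I would recall that the roots and co-roots of $H$ are precisely the parameters in $E \subset \partial H$ at which the extended multiplier map $\lambda_{\overline{H}}$ takes the value $1$, and that the proof of Theorem~\ref{t:hmd} produces exactly $d-1$ such parameters: each of the $d-1$ boundary fixed points $z^{(1)},\dots,z^{(d-1)}$ of the first return map of the characteristic Fatou component becomes the characteristic parabolic point at exactly one parameter of $E$; these parameters are distinct, since at each $c\in E$ the characteristic parabolic point of the (unique) parabolic orbit is a single point; and all of them are realized. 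Combined with the fact that the mapping degree of $\lambda_{\overline{H}}$ is $d-1$ and is bounded above by $|E|$, this pins down $|E| = d-1$ whenever the period exceeds one.

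Next I would sort these $d-1$ parameters into roots and co-roots according to the orbit portrait of the associated characteristic point. By Lemma~\ref{l:brycf}, exactly one of the functions $z^{(i)}$, say $z^{(1)}$, carries a non-trivial portrait (more than one dynamical ray landing), while the remaining $d-2$ carry trivial portraits; and by Corollary~\ref{c:phe} these portraits stay constant throughout $H\cup E$. At the unique $c\in E$ where $z^{(1)}(c)$ is the characteristic parabolic point, the parabolic orbit has a non-trivial portrait and hence disconnects the Julia set, so $c$ is a root --- in agreement with the ``exactly one root'' statement of Theorem~\ref{t:hmd}. At each of the other $d-2$ parameters of $E$, where some $z^{(i)}(c)$ with $i\neq 1$ is the characteristic parabolic point, the portrait is trivial, so the parabolic orbit does not disconnect the Julia set and the parameter is, by Definition~\ref{Def:RootsAndCenters}, a co-root.

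Subtracting the single root from the total $d-1$ then yields exactly $d-2$ co-roots, which is the assertion. I do not anticipate a genuine obstacle: the only step requiring care is the bijection between the candidate characteristic points $\{z^{(1)},\dots,z^{(d-1)}\}$ and the set $E$, together with the matching of ``non-trivial portrait'' to ``root'' and ``trivial portrait'' to ``co-root.'' Both of these are already the substance of Theorem~\ref{t:hmd} and Lemma~\ref{l:brycf}, so I would invoke them rather than reprove them, and the corollary reduces to the arithmetic $(d-1)-1 = d-2$.
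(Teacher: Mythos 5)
Your proposal is correct and follows exactly the route the paper intends: the corollary is stated without a separate proof precisely because it is the arithmetic consequence $(d-1)-1=d-2$ of Theorem~\ref{t:hmd} (mapping degree $d-1$, all $d-1$ candidate parameters in $E$ realized and distinct, exactly one root) together with Lemma~\ref{l:brycf} and Corollary~\ref{c:phe} identifying the unique non-trivial-portrait boundary point with the root and the trivial-portrait ones with co-roots. Your reconstruction of that implicit argument is accurate and complete.
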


\noindent
The previous two statements, Theorem~\ref{t:hmd} and Corollary~\ref{c:par+hyp}, prove the last assertion of
the Structure Theorem.

\begin{corollary}\label{Unique center}
Every hyperbolic component of $\mathcal{M}_d$ has exactly one center.
\end{corollary}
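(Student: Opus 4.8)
The plan is to identify the centers of $H$ with the fiber $\lambda_H^{-1}(0)$ of the multiplier map and then count that fiber with multiplicity, using the mapping degree already computed in Theorem~\ref{t:hmd}. First I would note that, since $f_c'(z)=dz^{d-1}$ vanishes only at the critical point $0$, the unique attracting orbit of $f_c$ (for $c\in H$) is super-attracting exactly when $0$ lies on it; hence $c$ is a center of $H$ if and only if $\lambda_H(c)=0$. Thus the set of centers of $H$ is precisely $\lambda_H^{-1}(0)$. Moreover this fiber lies in $H$ rather than on $\partial H$: since $\lambda_{\overline H}$ is proper it maps $\partial H$ into $\partial\mathbb{D}$, so $\lambda_{\overline H}^{-1}(0)\subset H$. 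In particular $H$ has at least one center, as already recorded after Definition~\ref{Def:RootsAndCenters}.

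Next I would bound from below the local multiplicity of $\lambda_H$ at each center. Let $c_0$ be any center and let $z(c)$ be the holomorphic continuation of the orbit point with $z(c_0)=0$, exactly as in the proof of Theorem~\ref{t:hmd}. The computation performed there yields the local factorization $\lambda_{\overline H}(c)=d^n\,z(c)^{d-1}g(c)^{d-1}$ with $g$ holomorphic and non-vanishing near $c_0$. Since $z(c_0)=0$ we have $\mathrm{ord}_{c_0}(z)\geq 1$, so the local multiplicity of $\lambda_H$ at $c_0$, i.e.\ the order of vanishing of $\lambda_{\overline H}$ there, equals $(d-1)\,\mathrm{ord}_{c_0}(z)\geq d-1$.

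Finally I would count. Because $\lambda_{\overline H}$ is a proper holomorphic map of mapping degree $d-1$ (Theorem~\ref{t:hmd}), the sum of the local multiplicities of $\lambda_H$ over the fiber $\lambda_H^{-1}(0)$ equals $d-1$. Combining this identity with the lower bound $d-1$ for the local multiplicity at each center gives $d-1=\sum_{c\in\lambda_H^{-1}(0)}\mathrm{mult}_c(\lambda_H)\geq (d-1)\cdot\#\lambda_H^{-1}(0)$, whence $\#\lambda_H^{-1}(0)\leq 1$; together with the existence of at least one center this forces exactly one.

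The step requiring the most care is the passage from the global degree $d-1$ to a sharp count of centers: one must be certain that \emph{every} point of the fiber carries local multiplicity at least $d-1$, which is exactly where unicriticality enters, through the $(d-1)$-fold factor $z(c)^{d-1}$ coming from $f_c'(z)=dz^{d-1}$ (in a multicritical family this factor would split and the conclusion would fail). The remaining ingredients---properness of $\lambda_{\overline H}$, so that fibers are finite and carry constant total multiplicity equal to the degree, and the non-escape of the fiber to $\partial H$---are standard. As a byproduct the argument shows $\mathrm{ord}_{c_0}(z)=1$, a Gleason-type transversality statement that the center is a simple zero of the continued orbit point.
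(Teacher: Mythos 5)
Your proof is correct and follows essentially the same route as the paper: both identify the centers with the fiber $\lambda_{\overline H}^{-1}(0)$, use the local factorization $\lambda_{\overline H}(c)=d^n\,z(c)^{d-1}g(c)^{d-1}$ from the proof of Theorem~\ref{t:hmd} to get local multiplicity at least $d-1$ at a center, and compare with the global mapping degree $d-1$. Your write-up is merely a bit more explicit in applying the multiplicity bound to every point of the fiber rather than to a single center, but the argument is the same.
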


\begin{proof}
By Theorem \ref{t:hmd} and its proof, $H$ has at least one center $c_0$ (i.e. $\lambda_{\overline{H}}(c_0)=0$) such that the local mapping degree of $\lambda_{\overline{H}}$ at $c_0$ is $d-1$. But, the mapping degree of $\lambda_{\overline{H}}$ is $d-1$, and hence, $c_0$ is the unique parameter where $\lambda_{\overline{H}}$ vanishes. Therefore, $H$ has a unique center.
\end{proof}

So far we have showed that \emph{at least}~$d$
parameter rays at $n$-periodic angles land on the boundary of every hyperbolic component of the same period $n$. For the proof of the Structure Theorem it remains
to show that \emph{at most}~$d$ parameter rays land at every
hyperbolic component. For this purpose we need to connect the landing points of the various
parameter rays at periodic angles landing on the boundary of a common hyperbolic component by \emph{internal rays}.

\begin{definition}[Internal Rays of a Hyperbolic Component]
An \emph{internal ray} of a hyperbolic component~$H$ is an
arc~$c \colon \left[0,1\right] \to \overline{H}$ starting at the center such that
there is an angle~$\theta$ with~$\displaystyle\lambda_{\overline{H}}(c) = \left[0,1\right] \cdot e^{2\pi i\theta}$.
\end{definition}

\emph{Remark.}
Since~$\lambda_{H}$ is a~$(d-1)$-to-one map, an internal ray of~$H$
with a given angle is not uniquely defined. In fact, a hyperbolic component has~$\left( d-1 \right)$ internal rays with
any given angle~$\theta$.

\section{Kneading Sequences}\label{s:kneading}
In this section we complete the description of the landing properties
of parameter rays at periodic angles by an induction proof on the ray period,
Theorem~\ref{t:exactlytwo}. We use a similar strategy as
in~\cite[Section~3]{S1a}. However, contrary to the quadratic case,
we need for~$d > 2$ some knowledge on the hyperbolic components, which we
accumulated in the previous sections.

\begin{definition}[Itineraries and Kneading Sequences]
Fix $d \geq 2 $. For an angle~$\theta \in \mathbb{R}/\mathbb{Z}$ label the components of $\mathbb{R}/\mathbb{Z} \setminus \{ \theta/d, (\theta + 1)/d, \cdots, \{\theta + (d-1)\}/d \}$ in the following
manner:
\[L_\theta(\eta):=\left\{\begin{array}{ll} m &
\mathbf{if} \hspace{2mm}  \eta\in\Bigl(\{\theta+(m-1)\}/d, (\theta+m)/d\Bigr)\\
\left( m_1, m_2\right)& \mathbf{if}
 \hspace{2mm}  \eta=\{\theta+(m_2-1)\}/d=(\theta+m_1)/d\\
\end{array}\right.\]
for some $m, m_1, m_2 \in \{ 0, 1,\cdots, d-1 \}$.

The infinite sequence $I_\theta(\eta):= \{ L_\theta(\eta), 
L_\theta(d\eta), L_\theta(d^2\eta), \ldots \}$ is called
the \emph{$\theta$-itinerary of~$\eta$} under the d-tupling map.  We call the special
itinerary $K(\theta):=I_\theta(\theta)
= \{ L_\theta(\theta), L_\theta(d\theta), L_\theta(d^2\theta),\ldots \}$ the
\emph{kneading sequence of~$\theta$}.
 
The symbols $\left( 0,  1 \right), \left( 1, 2 \right), \cdots,\left( d-2, d-1 \right), \left( d-1, 0 \right)$
are called \emph{boundary symbols}.  Sometimes we replace them by an
asterisk ($\ast$).
 
It is convenient to write~$K(\theta_1)=K(\theta_2)$ for
angles~$\theta_1,\theta_2$ if both angles have matching boundary symbols at
the same entries and all other symbols coincide.
\end{definition}

In the following theorem we define a partition of initial kneading
sequences and do most of the proof of the induction step of
Theorem~\ref{t:exactlytwo}.

\begin{theorem}[The Induction Step]
\label{t:parks}
Let~$n \geq 2$ be an integer. Suppose that the root of every hyperbolic
component with period~$n-1$ or lower is the landing point of exactly
two parameter rays. Then any two parameter rays with
angles~$\theta_1,\theta_2$ of exact ray period~$n$ can land at the
same parameter only if~$K(\theta_1)=K(\theta_2)$.
\end{theorem}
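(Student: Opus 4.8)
The plan is to convert the purely combinatorial claim about kneading sequences into a statement about \emph{dynamical} itineraries in the plane of the common landing parameter, exploiting the fact that both rays land at the \emph{same} characteristic point there. First I would fix the dynamical picture. By Lemma~\ref{per-rays-land} the common landing parameter $c_0$ of $\mathcal{R}_{\theta_1}$ and $\mathcal{R}_{\theta_2}$ is parabolic of exact ray period $n$, and by Theorem~\ref{t:perchar} both dynamical rays $\mathcal{R}_{\theta_1}^{c_0}$ and $\mathcal{R}_{\theta_2}^{c_0}$ land at the characteristic point $z_1$ of the parabolic orbit. Hence $\theta_1,\theta_2$ lie in the same angle set $\mathcal{A}_1$ of the parabolic orbit portrait, and $d^k\theta_1, d^k\theta_2$ lie in the same $\mathcal{A}_{(k\bmod p)+1}$ for every $k$; that is, the two angle-orbits shadow one and the same orbit $z_1,z_2,\dots$ in the dynamical plane, with $d^k\theta_1$ and $d^k\theta_2$ two rays co-landing at the common orbit point.

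Next I would record the dynamical meaning of a kneading symbol. The $d$ partition points $\{(\theta_i+j)/d : 0\le j\le d-1\}$ defining $K(\theta_i)$ are exactly the angles of the $d$ dynamical rays that are $f_{c_0}$-preimages of $\mathcal{R}_{\theta_i}^{c_0}$; these land at the $d$ preimages $w_0,\dots,w_{d-1}$ of $z_1$ and cut the dynamical plane into $d$ sectors. Thus $K(\theta_i)$ is precisely the itinerary of the orbit of $z_1$ through these $d$ sectors, a boundary symbol $\ast$ occurring at position $k$ exactly when $d^{k+1}\theta_i=\theta_i$, i.e. when $k\equiv -1 \pmod n$, independently of $i$. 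So both kneading sequences carry their $\ast$'s at the same entries and automatically agree there. Moreover, since $\theta_1$ and $\theta_2$ land at the same point $z_1$, the boundary rays $(\theta_1+j)/d$ and $(\theta_2+j)/d$ land at the \emph{same} points $w_j$, so the two sector partitions differ only in which ray at each $w_j$ is chosen as a boundary.

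It therefore remains to show that at every non-$\ast$ position $k$ the common orbit point lies in the same sector for both partitions; equivalently, on the circle the short arc between $d^k\theta_1$ and $d^k\theta_2$ (they co-land, so by property (3) of Theorem~\ref{complete holomorphic} they lie in a common arc of length $<1/d$) contains none of the partition points $(\theta_i+j)/d$. I expect this to be the main obstacle. I would argue by contradiction: if the two sequences first disagree at some position $m\le n-2$, then a partition point $q$ lies strictly between $d^m\theta_1$ and $d^m\theta_2$. Its landing point $w_j$ is a repelling (pre)periodic point which is separated from the orbit point $z_{(m\bmod p)+1}$ by the co-landing pair $\mathcal{R}_{d^m\theta_1}^{c_0},\mathcal{R}_{d^m\theta_2}^{c_0}$. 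Pulling this separating configuration back along the finite orbit and invoking the orbit separation lemmas (Lemma~\ref{Lem:OrbitSeparationParabolic}) together with stability of the relevant ray pair (Lemma~\ref{l:preserving-portrait1}), I would extract a parabolic root of ray period strictly less than $n$ whose characteristic data simultaneously constrain $\theta_1$ and $\theta_2$.

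The inductive hypothesis, that every root of ray period $\le n-1$ is the landing point of exactly two parameter rays with a rigid combinatorial pattern, then contradicts the assumed disagreement, forcing $K(\theta_1)=K(\theta_2)$. The delicate point, on which I would spend the most care, is controlling the choice of preimages so that the lower-period object one produces genuinely has ray period $<n$ and genuinely pins down both $\theta_1$ and $\theta_2$; this is exactly the place where the hypothesis on periods $\le n-1$, and the count of roots and co-roots obtained in Section~\ref{s:hyp2}, must be fed in, and it is what makes the $d>2$ case require more than the bare combinatorics of \cite{S1a}.
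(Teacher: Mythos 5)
Your overall strategy --- transporting the question to the dynamical plane of the common landing parameter $c_0$ and reading off $K(\theta_i)$ as the itinerary of the orbit of the characteristic point $z_1$ with respect to the partition by the $d$ preimage rays of $\mathcal{R}_{\theta_i}^{c_0}$ --- starts correctly, but the central equivalence you reduce to is false, and the direction you need is exactly the one that fails. You claim that the $k$-th entries agree if and only if the short arc between $d^k\theta_1$ and $d^k\theta_2$ contains none of the partition points $(\theta_i+j)/d$. Consider the case in which both $d^k\theta_1$ and $d^k\theta_2$ lie in a single ``sliver'' arc $\bigl((\theta_1+j)/d,\,(\theta_2+j)/d\bigr)$ (equivalently, the common landing point $z_{1+k}$ lies in the region cut off by the two preimage rays at the preimage point $w_j$ of $z_1$). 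Then the short arc between $d^k\theta_1$ and $d^k\theta_2$ contains no partition point, yet $L_{\theta_1}(d^k\theta_1)=j+1$ while $L_{\theta_2}(d^k\theta_2)=j$: the entries differ. This configuration is not a pathology; it occurs precisely when some orbit point $z_{2+k}$ lies in the sector at $z_1$ subtended by the arc $(\theta_1,\theta_2)$, which happens whenever $\theta_1,\theta_2$ are a non-characteristic pair of angles landing at $z_1$. Indeed, if your criterion were correct, the argument would show that \emph{any} two angles whose dynamical rays co-land at the characteristic point have equal kneading sequences --- contradicting Theorem~\ref{t:diffks}, and destroying the mechanism by which Theorem~\ref{t:exactlytwo} rules out extra parameter rays. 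The underlying reason is that in comparing $L_{\theta_1}(d^k\theta_1)$ with $L_{\theta_2}(d^k\theta_2)$ both the tracked point \emph{and} the partition change, so the dynamical plane of $c_0$ alone cannot decide the question; this is consistent with the fact that the hypothesis of the theorem is about co-landing of \emph{parameter} rays, which is strictly stronger than co-landing of the dynamical rays at $z_1$.

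The correct criterion is a crossing count: as $\theta$ moves from $\theta_1$ to $\theta_2$, the $k$-th entry of $K(\theta)$ is incremented by one (mod $d$) each time $d^{k-1}\theta$ crosses a moving partition point, i.e.\ at each angle $\theta$ with $d^{k}\theta=\theta$; so the entries agree exactly when the number of periodic angles of period dividing $k\le n-1$ in $(\theta_1,\theta_2)$ is divisible by $d$. This is irreducibly a statement about where the lower-period \emph{parameter} rays lie relative to $\mathcal{R}_{\theta_1}\cup\{c_0\}\cup\mathcal{R}_{\theta_2}$, and it is precisely what the paper proves: it builds the partition $P_{n-1}$ from all parameter rays of exact period $k\le n-1$ together with the zero-angle internal rays of the period-$k$ hyperbolic components, and uses the count ``exactly $d$ parameter rays per component'' (two at the root, by the inductive hypothesis, plus one at each of the $d-2$ co-roots, by Corollary~\ref{c:nonesspar} and Corollary~\ref{c:par+hyp}) to conclude that complete groups of $d$ such angles lie in $(\theta_1,\theta_2)$. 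Your proposal defers all of this to the sentence about ``extracting a parabolic root of ray period strictly less than $n$'' by pulling back a separating ray pair, but no mechanism is given for converting a dynamical-plane separation at $c_0$ into the required divisibility of the count of lower-period angles in the arc $(\theta_1,\theta_2)$; that conversion is the entire content of the theorem, and it is where the parameter-plane structure you are trying to avoid must enter.
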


\begin{proof}
We claim that there is a partition~$P_{n-1}$ of~$\mathbb{C}$ such that every
parameter ray with exact ray period~$n$ together with its landing
point is completely contained in an open component of~$P_{n-1}$.
Moreover, for any two parameter rays~$\mathcal{R}_{\theta_1}, \mathcal{R}_{\theta_2}$
which are both in the same open component of~$P_{n-1}$ the kneading
sequences of~$\theta_1$ and~$\theta_2$ coincide in the first~$n-1$
entries. Then the theorem follows because any parameter ray at an angle
of exact period~$n$ has a kneading sequence of period~$n$ and
the~$n$-th entry of the kneading sequence is~($\ast$).

We construct such a partition: let~$\Theta_k$ be the set of all angles
with exact period~$k$ and~$\Lambda_k$ the set of multiplier maps of
the~$k$-periodic hyperbolic components. We define
$\displaystyle P_{n-1}:= \bigcup_{k=1}^{n-1}\left( \bigcup_{\theta \in \Theta_k} \mathcal{R}_{\theta}
\cup \bigcup_{\lambda_{\overline{H}}\in\Lambda_k} \lambda_{\overline{H}}^{-1}\left( \left[ 0, 1\right] \right)\right)$
and assert that~$P_{n-1}$ is a partition with the required properties.
By construction~$P_{n-1}$ (together with the components
of~$\mathbb{C} \setminus P_{n-1}$) is a partition of~$\mathbb{C}$.  Parameter rays with exact
ray period~$k$ land at a parameter which has a parabolic orbit with
exact ray period~$k$ (Lemma~\ref{per-rays-land}).  For a
hyperbolic component~$H$ the inverse
image $\lambda_{\overline{H}}^{-1}\left( \left[ 0, 1\right] \right)$ is the set of all internal rays
with angle~$0$. Each of these~$d-1$ internal rays lands at a root or
co-root of~$H$ and conversely the root and every co-root of~$H$ is a
landing point of one of these internal rays. It follows that every
parameter ray of period~$n$ together with its landing point is contained in one of the open components of~$\mathbb{C} \setminus P_{n-1}$.
 
Now assume that two parameter rays~$\mathcal{R}_{\theta_1}, \mathcal{R}_{\theta_2}$
are both contained in the same open component of~$\mathbb{C} \setminus P_{n-1}$. We know that
every hyperbolic component has~$d-2$ co-roots and exactly one root. Also by assumption, exactly two (resp. one) parameter ray(s) land at every root (resp. co-root) of hyperbolic components of period $k$, for $k \in \{ 1,\cdots,n-1 \}$. It follows that on the boundary of every hyperbolic component of period~$k$ ($k \in \{ 1,\cdots,n-1 \}$), exactly~$d$ parameter rays of period~$k$
land. Thus, for every~$k \in \{ 1,\cdots,n-1 \}$ the number of angles which are
in~$\Theta_k\cap \left( \theta_1,\theta_2 \right)$ is~$m\cdot d$ for
some~$m \in \mathbb{N}\cup \{ 0 \}$. This yields that the~$k$-th entry
($k \in \{ 1,\cdots,n-1 \}$) of~$K(\theta)$ is incremented~$m\cdot d$
times as~$\theta$ travels from~$\theta_1$ to~$\theta_2$. Therefore, $\theta_1$ and $\theta_2$ have the same kneading sequences.
\end{proof}

\begin{theorem}[Different Kneading Sequences]
\label{t:diffks}
Consider the angles of periodic rays landing at the characteristic point of the parabolic orbit in the dynamical plane of $c_0$, where $c_0$ is a root. Then all these angles have pairwise different kneading sequences, except for possibly the two characteristic angles.
\end{theorem}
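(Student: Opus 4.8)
The plan is to reduce the statement to a combinatorial computation governed by the rotational structure of the parabolic orbit portrait $\mathcal{P}=\{\mathcal{A}_1,\dots,\mathcal{A}_k\}$ of $c_0$, where $\mathcal{A}_1$ is the set of angles landing at the characteristic point $z$ and $k$ is the orbit period. Put $v:=\lvert\mathcal{A}_1\rvert$. If $\mathcal{P}$ is primitive then $v=2$, the only rays at $z$ are the two characteristic rays, and there is nothing to prove; so I assume $\mathcal{P}$ is of satellite type. Then by Lemma~\ref{folklore} the first return map $f_{c_0}^{\circ k}$ permutes the $v$ rays at $z$ transitively, as a combinatorial rotation with rotation number $r/v$, $\gcd(r,v)=1$, and the ray period is $n=kv$. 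For $\theta\in\mathcal{A}_1$ the orbit $d^m\theta$ runs cyclically through $\mathcal{A}_1,\dots,\mathcal{A}_k$ and returns to $\mathcal{A}_1$ exactly at the iterates $d^{jk}\theta$, which are the $j$-fold rotates of $\theta$. The cut points $(\theta+i)/d$ defining $L_\theta$ are the $d$ preimages of $\theta$; they land at the $d$ preimages of $z$, and, since each $\mathcal{A}_l$ lies in an arc shorter than $1/d$, exactly one of them---the preimage of $\theta$ landing at the orbit point $z_k$ that maps to $z$---lies in the arc carrying $\mathcal{A}_k$, the others lying in the rotated copies $\mathcal{A}_k+i/d$.

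The first step is to show that $K(\theta)=I_\theta(\theta)$ depends on the choice of $\theta\in\mathcal{A}_1$ only through the entries at positions $k,2k,\dots,(v-1)k$, i.e.\ exactly when $d^{m}\theta$ lies in $\mathcal{A}_k$. Indeed, at any other position the orbit point is a ray landing at some $z_l$ with $l\neq k$, hence at a point distinct from all $d$ preimages of $z$; since rays landing at distinct points do not interlink, $\mathcal{A}_l$ lies in a single sector of the $\theta$-partition and no cut point enters its arc, and as $\theta$ varies over $\mathcal{A}_1$ the cut points merely move within the ray families of the (fixed) preimages of $z$ without ever crossing $\mathcal{A}_l$. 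Thus that label is the same for every $\theta\in\mathcal{A}_1$. It is convenient to see this common value through a parameter $c\notin\mathcal{M}_d$ in the $\mathcal{P}$-wake with $t(c)\in(t^-,t^+)$: by Lemma~\ref{c:wake} all rays with angles in $\mathcal{A}_1$ co-land at one repelling point $z(c)$ of period $k$, and the lemma ``Outside the Multibrot Sets'' identifies these common entries with the symbol sequence of $z(c)$.

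At position $jk$ the orbit point and the single cut point in $\mathcal{A}_k$ are \emph{both} rays landing at $z_k$, namely the preimages in $\mathcal{A}_k$ of the $j$-fold rotate of $\theta$ and of $\theta$ itself. Hence the label equals the sector index $s(\theta)$ of the cut point, plus $0$ or $1$ according as the rotate lies to the left or to the right of $\theta$. Since multiplication by $d$ carries the critical arc of $\mathcal{A}_k$ onto the characteristic arc $(t^-,t^+)$ in an orientation-preserving way, this left/right datum is simply the relative position of the two angles in the order on $\mathcal{A}_1$ obtained by cutting the circle along the characteristic arc, so that $\phi_0=t^+$ and $\phi_{v-1}=t^-$. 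Writing $\theta=\phi_a$, the left bit at position $jk$ is $b_j(a)$, where $b_j(a)=1$ precisely when $(a+jr)\bmod v<a$ and $0$ otherwise. As $j$ runs through $1,\dots,v-1$ the indices $(a+jr)\bmod v$ run through $\{0,\dots,v-1\}\setminus\{a\}$, of which exactly $a$ are smaller than $a$; this yields the key identity $\sum_{j=1}^{v-1}b_j(a)=a$.

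Finally, suppose $K(\phi_a)=K(\phi_{a'})$. The entries off the positions $jk$ agree automatically by the first step, and at position $jk$ the label is $s(\phi_a)+\bigl(1-b_j(a)\bigr)$; because the sector index $s(\phi_a)$ is a single number independent of $j$, equality of all entries at the positions $jk$ amounts to $b_j(a)-b_j(a')$ being independent of $j$. Summing over $j$ and using the identity gives $a-a'=(v-1)\delta$ with $\delta\in\{-1,0,1\}$, whence either $a=a'$ or $\{a,a'\}=\{0,v-1\}=\{t^+,t^-\}$, which is exactly the assertion. I expect the only real difficulty to be the bookkeeping of the \emph{absolute} sector index at $\mathcal{A}_k$: it is \emph{not} constant as $\theta$ ranges over $\mathcal{A}_1$, since the cut point migrates between neighbouring sectors, so one may exploit only the $j$-dependence of the bits $b_j(a)$ and not their absolute values. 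A naive argument freezing the sector index would wrongly separate $t^-$ from $t^+$ and contradict Theorem~\ref{t:parks}; the content of the last step is precisely that the bits $b_j$, through the identity $\sum_j b_j(a)=a$, already pin down $a$ up to the single transposition $0\leftrightarrow v-1$ of the characteristic angles.
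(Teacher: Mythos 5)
Your route is genuinely different from the paper's. The paper passes to the center $c_1$ of the adjacent hyperbolic component, builds the partition $P_{\theta_i}$ out of the $d$ rays at the preimages of $z_1$ joined through $\overline{U_0}$ by preimages of an arc of the Hubbard tree, and then runs a counting argument on how many of the rays landing at $z_k\in\partial U_0$ fall into each piece; this requires Lemma \ref{l:tree2}, Corollary \ref{c:phe} and Lemma \ref{l:brycf}. You never leave the circle: everything is reduced to the formal orbit portrait, the rotation number $r/v$ of the first return map on $\mathcal{A}_1$ (Lemma \ref{folklore}), and the identity $\sum_{j=1}^{v-1}b_j(a)=a$. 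Your key computation is correct: the spanning arc of $\mathcal{A}_k$ has length $<1/d$, hence contains exactly one cut point $\tau(\theta)\in\mathcal{A}_k$, and it maps orientation-preservingly onto the complement of the characteristic arc, so the entry at position $jk$ is indeed $s(\phi_a)+1-b_j(a)\pmod d$. Your $\sum_j b_j(a)=a$ is the combinatorial shadow of the paper's count of rays on either side of the cut ray, but your version dispenses with the dynamical plane entirely, which is a real simplification. One remark: your first step (that the entries off the positions $jk$ coincide for all $\theta\in\mathcal{A}_1$) is both shakier than you suggest -- unlinkedness only puts $\mathcal{A}_l$ inside a single gap of $\bigcup_i(\mathcal{A}_k+i/d)$, and if that gap is a bounded gap of some $\mathcal{A}_k+i/d$ the cut point can jump across $\mathcal{A}_l$ as $\theta$ varies -- and entirely unnecessary for the statement, which only asserts an implication from $K(\phi_a)=K(\phi_{a'})$ and therefore only needs the positions $jk$.

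There is, however, one step that genuinely fails, namely for $d=2$ (the Mandelbrot set, which is squarely in the paper's scope). You deduce that $b_j(a)-b_j(a')$ is independent of $j$ from the congruence $b_j(a)-b_j(a')\equiv s(\phi_a)-s(\phi_{a'})\pmod d$; this is valid for $d\ge 3$ because $-1,0,1$ are then distinct modulo $d$, but for $d=2$ the congruence only pins down the parity, and the case $b_j(a)\ne b_j(a')$ for all $j$ gives $\sum_j\bigl(b_j(a)+b_j(a')\bigr)=v-1$, i.e.\ $a+a'=v-1$, which does not single out $\{0,v-1\}$ (e.g.\ $a=1$, $a'=v-2$ survives). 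The gap is fixable by a short direct argument: writing $\rho_j:=jr\bmod v$ one has $b_j(a)=[\rho_j\ge v-a]$, and if $b_j(a)\ne b_j(a')$ for every $j$ with, say, $a<a'$, then every $\rho\in\{1,\dots,v-1\}$ must satisfy $v-a'\le\rho<v-a$, forcing $a=0$ and $a'=v-1$. With that supplement your argument is complete for all $d\ge 2$; without it, the quadratic case is not covered.
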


\begin{proof}
We introduce notations: let~$z_1$ be the characteristic point
and~$\mathcal{R}_{\theta_1}^{c_0}, \cdots, \mathcal{R}_{\theta_s}^{c_0}$ the dynamical rays
landing at~$z_1$. We call the associated orbit portrait $\mathcal{P}$. For~$s = 2$ (where $s$ is the number of dynamical rays landing at $z_1$) there is nothing to prove, so we
assume~$s \geq 3$ and are automatically in the non-primitive case
(Lemma~\ref{folklore}). Denote the exact period of the
angles~$\theta_1,\cdots,\theta_s$ by~$n$ and the orbit period of the
parabolic orbit by~$k = n/s$. Without restriction we
assume~$k \geq 2$. For~$i \in \{ 1,\cdots,s \}$ denote the~$d$ inverse
images of~$\theta_i$ with respect to the~$d$-tupling map
by~$\theta_i^{(l)}:= \left(\theta_i+l \right)/d \in \mathbb{S}^1$ and the landing point
of~$\theta_i^{(l)}$ by~$z^{(l)}_0$,
$l \in \{ 0,\cdots, d-1 \}$. The points ~$z^{(0)}_0,\ldots,z^{(d-1)}_0$ do not
depend on the choice of a specific angle~$\theta_i$
($i \in \{ 1,\cdots,s \}$); in fact they are just the pre-images of $z^1$ under $f_{c_0}$. Finally, let $H$ be the hyperbolic component of period $n$ with $c_0 \in \partial H$ and $c_1$ a center of $H$.

By Corollary~\ref{c:phe}
and Lemma~\ref{l:brycf} there are continuous
functions~$z^{(0)},\cdots,z^{(d-1)}$ on~$H \cup \{ c_0 \}$ such
that~$z^{(l)}(c_0)=z^{(l)}_0$ and at~$z^{(l)}(c)$ land the
dynamical rays at the same angles as at~$z^{(l)}_0$ for all~$i \in \{ 0,1,\cdots,d-1 \}$
and~$c \in H \cup \{ c_0 \}$. The points~$z^{(l)}(c_1)$ lie on the
boundary of the critical Fatou component~$U_0$
of~$f_{c_1}$. Let~$\Gamma$ be the Hubbard tree of~$c_1$, $U_1$ the
characteristic Fatou component and~$\gamma$ be the regular arc which
connects the unique intersection point~$\Gamma \cap \partial U_1$ with
the critical value~$c_1$. Then~$\gamma$ has~$d$ inverse images and each of
them lies in~$\overline{U_0}$ and connects the critical point with one
of the points~$z^{(l)}(c_1)$. Therefore,~$\mathbb{C} \setminus P_{\theta_i}$ 
(where, $P_{\theta_i}:= f_{c_1}^{-1}\left( \gamma \right)\cup\bigcup_{l = 0}^{d-1} \mathcal{R}_{\theta_i^{(l)}}^{c_1}$)
has precisely~$d$ open components for each $i \in \{ 1, 2, \cdots, s \}$. We label the
boundary~$P_{\theta_i}$ by~($\ast$) and the component containing the
critical value~$c_1$ by~$L_1$. The subsequent components are labelled (as $L_i$) in anti-clockwise direction. By construction the branch of~$\Gamma \setminus \overline{U_0}$ on which the critical value lies is contained in the component with
label~$L_1$. Since~$f_{c_1}$ is orientation preserving this implies that
the label of any branch of~$\Gamma \setminus \overline{U_0}$ does not depend
on~$P_{\theta_i}$. Now it's easy to see that for a fixed $j \neq k-1 (mod \hspace{1mm} k)$, all the dynamical rays at angles $\{ d^j \cdot \theta_i : i = 1,2,\cdots,s \}$ have the same label with respect to the corresponding partition $P_{\theta_i}$. Thus the kneading sequences of the ~$\theta_i$'s can differ only in the $(mk-1)$-th position, for some $m \in \mathbb{N}$.

\begin{figure}[!ht]\label{kneading}
\begin{center}
\includegraphics[scale=0.35]{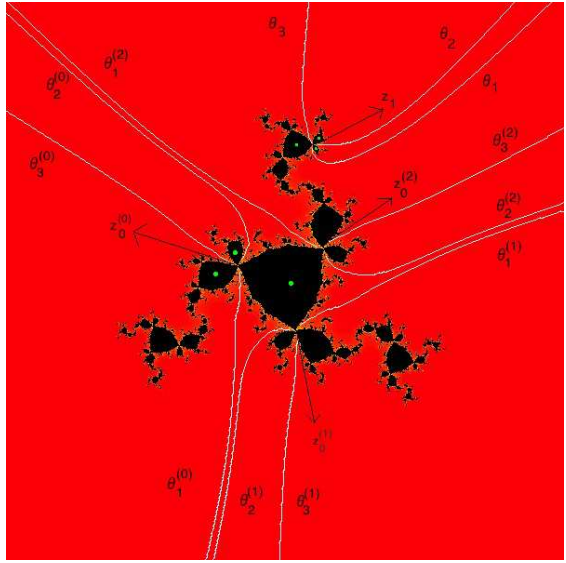}
\end{center}
\caption{The Julia set of a polynomial $z \mapsto z^3 + c, (c \approx 0.2253 + 0.9414 i)$ with a 6-periodic super-attracting orbit (marked in green). With the notation of the proof, the dynamical rays landing at $z_1$ have angles $\theta_1 = 92/728, \theta_2 = 100/728$ and $\theta_3 = 172/728.$ The rays $\theta_i^{(l)}$ landing at the three pre-images of $z_1$ are also drawn in.}
\end{figure}

The kneading sequences of all~$\theta_i$ except for two angles are
pairwise different at an~$(mk-1)$-th position: the~$(mk-1)$-th entry
of the kneading sequence of~$\theta_i$ is just the label
of the ray $\mathcal{R}_{d^{(mk-1)} \cdot \theta_i}^{c_1}$ with respect
to the partition $P_{\theta_i}$. Let the dynamical rays landing at the parabolic point on the boundary of the critical Fatou component be $ \mathbb{S} = \{ \mathcal{R}_{\theta_1^{(l)}}^{c_1}, \mathcal{R}_{\theta_2^{(l)}}^{c_1}, \cdots, \mathcal{R}_{\theta_s^{(l)}}^{c_1}\}$, for some $ l \in \{ 0, 1, \cdots, d-1 \}$. Now, $K(\theta_i)=K(\theta_j)$ only if the number of rays (amongst $\mathbb{S}$) which lie in a given component with label $L_r$ with respect to the partition $P_{\theta_i}$ is equal to the number of such rays that lie in the corresponding component with label $L_r$ with respect to the partition $P_{\theta_j}$. However, if at least two rays at angles in $\mathbb{S}$ have different labels with respect to~$P_{\theta_i}$, then the number of rays with the smaller label is different with respect to~$P_{\theta_i}$ and~$P_{\theta_j}$ for~$i \neq j$. Therefore, all these dynamical rays must have the same label. This is only possible if none of the rays at angles in $\mathbb{S}$ lies in the connected component of $\mathbb{C} \setminus (\mathcal{R}_{\theta_i^{(l)}}^{c_1} \cup \mathcal{R}_{\theta_j^{(l)}}^{c_1}\cup
\{z^{(l)}(c_1)\})$ containing ~$U_0$, i.e.~if~$\theta_i$ and~$\theta_j$ are the characteristic angles.
\end{proof}

\noindent We finish the proof of the Structure Theorem in the periodic
case:

\begin{theorem}[Precisely Two Parameter Rays Land at Every Root ]
\label{t:exactlytwo}
Every root~$c_0$ is the landing point of exactly two parameter
rays. The angles of the parameter rays landing at~$c_0$ are
the characteristic angles of the parabolic orbit of~$c_0$.
\end{theorem}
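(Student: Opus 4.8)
The plan is to argue by strong induction on the ray period $n$ of the root $c_0$, feeding the inductive hypothesis into Theorem~\ref{t:parks} and combining it with the kneading-sequence separation of Theorem~\ref{t:diffks}. The base case is vacuous, since the period-one component has no root (Theorem~\ref{t:hmd}). For the induction step I would assume that every root of ray period strictly less than $n$ is the landing point of exactly two parameter rays; because each hyperbolic component of period $<n$ has a unique root (Theorem~\ref{t:hmd}), this is exactly the hypothesis required to invoke Theorem~\ref{t:parks} at level $n$.

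First I would dispose of the primitive case: if $c_0$ is a primitive root of ray period $n$, then it is essential (being a root), and Corollary~\ref{c:nonesspar} already yields that exactly two parameter rays land at $c_0$, namely the characteristic ones. It remains to treat a satellite root $c_0$ of ray period $n$, and here the idea is to first pin down the candidate angles and then show that only two survive. By Theorem~\ref{t:perchar}, any parameter ray $\mathcal{R}_\theta$ landing at $c_0$ forces the dynamical ray $\mathcal{R}_\theta^{c_0}$ to land at the characteristic point $z_1$ of the parabolic orbit; hence $\theta$ is one of the finitely many angles $\theta_1,\dots,\theta_s$ of the dynamical rays landing at $z_1$.

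The two separation theorems now pull in opposite directions and together force the count. On one side, Theorem~\ref{t:parks} (whose hypothesis is supplied by the induction) shows that any two period-$n$ parameter rays landing at the common parameter $c_0$ must have identical kneading sequences. On the other side, Theorem~\ref{t:diffks} shows that among $\theta_1,\dots,\theta_s$ the kneading sequences are pairwise distinct, with the sole possible exception of the two characteristic angles. By Lemma~\ref{At least two rays} the two characteristic rays do land at $c_0$; Theorem~\ref{t:parks} then forces them to share a kneading sequence, which is consistent with (and realizes) the one coincidence permitted by Theorem~\ref{t:diffks}. If any further angle $\theta_j$ had its parameter ray landing at $c_0$, Theorem~\ref{t:parks} would force $K(\theta_j)$ to equal this common characteristic kneading sequence, contradicting the distinctness in Theorem~\ref{t:diffks}. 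Hence at most two parameter rays land at $c_0$, and with Lemma~\ref{At least two rays} exactly two do, at the characteristic angles; this closes the induction.

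I expect the only genuinely delicate point to be the bookkeeping of the induction, namely verifying that the hypothesis of Theorem~\ref{t:parks} is truly available at each level. This reduces to the uniqueness of the root of each lower-period component (Theorem~\ref{t:hmd}) together with the inductive assumption, after which the proof is a direct assembly of Theorems~\ref{t:perchar}, \ref{t:parks}, \ref{t:diffks} and Lemma~\ref{At least two rays}. Everything else --- the reduction to the characteristic point and the two-sided kneading-sequence count --- is immediate once those ingredients are in place.
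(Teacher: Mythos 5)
Your proposal is correct and follows essentially the same route as the paper: induction on the ray period, with Theorem~\ref{t:parks} and Theorem~\ref{t:diffks} pulling in opposite directions to exclude all candidates except the two characteristic angles, whose landing is guaranteed by Lemma~\ref{At least two rays} and whose candidacy is restricted by Theorem~\ref{t:perchar}. The only cosmetic differences are that you dispose of the primitive case via Corollary~\ref{c:nonesspar} and treat the base case as vacuous, whereas the paper folds the period-one component (with its distinguished co-root at angles $0$ and $1$) into the induction start; neither affects the validity of the argument.
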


\begin{proof}[Proof by induction on the ray period~$n$ of~$c_0$]
For~$n=1$, there are $d-1$ co-roots; but there is a unique co-root which is the landing point of the parameter rays at
angles~$0$ and~$1$ that we consider as two rays in this case. Assume
that the roots of all hyperbolic components with period~$n-1$ or lower
are the landing points of exactly two parameter rays. We obtain by
Theorem~\ref{t:parks} that only the parameter rays at~$n$-periodic angles
with same kneading sequences can land at the root~$c_0$ of any hyperbolic
component with period~$n$. Note that a parameter ray
with a given angle can land at~$c_0$ only if the dynamical ray at the
same angle lands at the characteristic point~$z_0$ of the parabolic
orbit (Theorem~\ref{t:perchar}) and that the angles of all the dynamical rays
landing at~$z_0$, except possibly for the characteristic
angles~$t^{-}$ and $t^{+}$, have different kneading
sequences. Therefore, the only candidates for landing at $c_0$ are the two parameter rays at
angles~$t^{-},t^{+}$. By Corollary~\ref{At least two rays}, we know that they indeed land at $c_0$. This finishes
the induction.
\end{proof}

\section{Pre-periodic Parameter Rays}\label{s:preper}
In this section, we record the landing properties of the parameter rays of the multibrot sets at pre-periodic angles. The generalization of the following results from the quadratic case is straight-forward in this case and does not require any new technique. We refer the readers to \cite[Section~4]{S1a} for a more comprehensive account on the combinatorics of the parameter rays (of the Mandelbrot set) at pre-periodic angles.

\begin{definition}[Misiurewicz Point]
A parameter~$c$ for which the critical orbit is strictly pre-periodic is called a \emph{Misiurewicz point}.
\end{definition}

\begin{theorem}[Pre-periodic Parameter Rays Land]
\label{t:prep-land}
Every parameter ray at a pre-periodic angle~$\theta$ lands at a
Misiurewicz point~$c_0$. The corresponding dynamical ray~$\mathcal{R}_\theta^{c_0}$ lands at
the critical value~$c_0$.
\end{theorem}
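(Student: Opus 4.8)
The plan is to follow the proof of Lemma~\ref{per-rays-land} essentially verbatim, with repelling pre-periodic points replacing repelling periodic points and Misiurewicz parameters replacing parabolic parameters. Write the pre-periodic angle as $\theta=a/b$ with $(b,d)\neq 1$, and let $k\geq 1$ be minimal such that $d^{k}\theta$ is periodic, say of period $m$. Two preliminary facts are needed, both obtained by repeating the periodic arguments. First, a pre-periodic analogue of Lemma~\ref{Lem:Landing of Dyn Rays}: for every $c$ the dynamical ray $\mathcal{R}_{\theta}^{c}$ lands at a pre-periodic point whose forward orbit meets a repelling (or parabolic) cycle, \emph{unless} $c$ lies on a parameter ray at one of the angles $d^{j}\theta$; moreover, landing at a repelling pre-periodic point is stable under perturbation of $c$ by \cite[Lemma~B.1]{GM1}, exactly as in Lemma~\ref{l:preserving-portrait1}. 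Second, a finiteness statement analogous to Lemma~\ref{l:parcount}: the Misiurewicz parameters with preperiod $k$ and period $m$ are among the roots of the polynomial $f_{c}^{\circ(k+m)}(0)-f_{c}^{\circ k}(0)$ in $c$, which is not identically zero, hence there are only finitely many of them.

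The core step is to analyse a limit point $c_{0}$ of $\mathcal{R}_{\theta}$; such a $c_{0}$ lies in $\mathcal{M}_d$. First I would show that $\mathcal{R}_{\theta}^{c_{0}}$ cannot land at a repelling pre-periodic point: if it did, then by the stability quoted above the ray $\mathcal{R}_{\theta}^{c}$ would land at a repelling pre-periodic point for all $c$ in a neighbourhood of $c_{0}$; but $c_{0}$ is accumulated by parameters $c$ lying on $\mathcal{R}_{\theta}$ itself, and for such $c$ the critical value has external angle $\theta$, so the dynamical ray $\mathcal{R}_{\theta}^{c}$ runs into the critical value and fails to land---exactly the obstruction exploited in Lemma~\ref{per-rays-land}. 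Since $c_{0}\in\mathcal{M}_d$ forces the rational ray $\mathcal{R}_{\theta}^{c_{0}}$ to land, its landing point must lie on the critical orbit; combined with the fact that for $c$ on the ray the critical value sits on $\mathcal{R}_{\theta}^{c}$ (so that the external angle of the critical value equals $\theta$ in the limit), one concludes that $\mathcal{R}_{\theta}^{c_{0}}$ lands precisely at the critical value $c_{0}$. Consequently the critical orbit of $f_{c_{0}}$ is strictly pre-periodic and eventually repelling, i.e.\ $c_{0}$ is a Misiurewicz point. Finally, since the accumulation set of the connected curve $\mathcal{R}_{\theta}$ is connected and, by the preceding paragraph, consists of finitely many Misiurewicz parameters, it reduces to a single point; thus $\mathcal{R}_{\theta}$ lands at a Misiurewicz point $c_{0}$ with $\mathcal{R}_{\theta}^{c_{0}}$ landing at $c_{0}$, as required.

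The hard part will be the identification of the landing point in the core step: transferring the information ``the critical value has external angle $\theta$'' from parameters outside $\mathcal{M}_d$ to the limiting dynamics, and thereby pinning the landing point of $\mathcal{R}_{\theta}^{c_{0}}$ down to the critical value rather than to some other pre-critical point. This is the dual of the corresponding periodic statement and is most cleanly handled through the continuous dependence of landing points on the parameter, i.e.\ the pre-periodic analogue of Theorem~\ref{t:cpl}. A related subtlety is to exclude a parabolic (or eventually-parabolic) landing: at a parabolic parameter the critical value lies in the interior of $K_{c}$, whereas $\mathcal{R}_{\theta}^{c_{0}}$ always lands on $J(f_{c_{0}})$, so the two cannot coincide, and this is what rules out the parabolic case once the landing point has been identified with the critical value. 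Both points are routine given the periodic theory already established, which is why the detailed proof may safely be left to the reader as in \cite[Section~4]{S1a}.
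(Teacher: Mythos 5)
The paper itself gives no argument here---it simply cites the pre-periodic case of \cite[Theorem~1.1]{S1a}---so the real question is whether your transplant of Lemma~\ref{per-rays-land} is sound, and it is not. The contradiction you extract in the core step rests on the claim that for a parameter $c$ on the parameter ray $\mathcal{R}_\theta$ the dynamical ray $\mathcal{R}_\theta^c$ ``runs into the critical value and fails to land.'' That is true only for \emph{periodic} $\theta$: a dynamical ray at angle $\eta$ terminates at a pre-critical point precisely when $d^k\eta=t(c)$ for some $k\geq 1$, and for $c\in\mathcal{R}_\theta$ one has $t(c)=\theta$, so $\mathcal{R}_\theta^c$ crashes if and only if $\theta$ is periodic. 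For strictly pre-periodic $\theta$ the ray $\mathcal{R}_\theta^c$ passes \emph{through} the critical value (which is a regular point of the Green's function) and continues on to land at a point of the Cantor Julia set. Hence stability of landing at a repelling pre-periodic point produces no contradiction, and your step ruling out a repelling pre-periodic landing point off the critical orbit collapses. What the condition $c\in\mathcal{R}_\theta$ actually gives is positional information: the critical value lies \emph{on} $\mathcal{R}_\theta^c$. The correct argument (Schleicher's, and the same mechanism as Theorem~\ref{t:perchar} of this paper) is: if $\mathcal{R}_\theta^{c_0}$ landed at a point $z_0$ different from the critical value, one produces a ray pair at repelling (pre-)periodic points separating $z_0$ from the critical value; this ray pair and the induced partition persist under perturbation, so for nearby $c$ the critical value and the landing point of $\mathcal{R}_\theta^c$ lie in different parts of the partition, whereas for $c\in\mathcal{R}_\theta$ the critical value lies on $\mathcal{R}_\theta^c$ and hence in the same part---a contradiction. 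The existence of that separating ray pair is the actual content of the proof and is exactly what you have not supplied.

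Two further points. First, your exclusion of the (pre-)parabolic case is circular: you rule it out ``once the landing point has been identified with the critical value,'' but your identification uses stability, which is available only when the eventual cycle is repelling; if $z_0$ eventually falls on a parabolic cycle you have neither the identification nor the exclusion, and only a separation argument (via Lemma~\ref{Lem:OrbitSeparationParabolic}) closes this case. Second, even in the repelling case the pre-periodic analogue of \cite[Lemma~B.1]{GM1} requires pulling the periodic ray back along the orbit of $\theta$, which fails precisely when some forward image $\mathcal{R}_{d^j\theta}^{c_0}$, $j\geq1$, lands at the critical value; this degenerate possibility also has to be excluded and is not addressed. The finiteness and connectivity parts of your sketch are fine.
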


\begin{proof}
See the proof of the pre-periodic case in \cite[Theorem~1.1]{S1a}.
\end{proof}

\begin{theorem}[Every Misiurewicz Point is a Landing Point]
\label{t:everymislp}%
Every Misiurewicz parameter is the landing point of a parameter ray at a pre-periodic angle.
\end{theorem}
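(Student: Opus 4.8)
The plan is to run the same kind of local analytic argument used for parabolic parameters, but now centred at the critical value. Since $c_0$ is a Misiurewicz parameter, in the dynamics of $f_{c_0}$ the critical value $c_0$ is a repelling pre-periodic point, so by the general landing theory (Lemma~\ref{Lem:Landing of Dyn Rays} and \cite[\S 18]{M1new}) at least one dynamical ray lands at it, necessarily at a pre-periodic angle; fix such an angle $\theta$, so that $\mathcal{R}_\theta^{c_0}$ lands at the critical value $c_0$. The goal is to prove that the \emph{parameter} ray $\mathcal{R}_\theta$ lands at $c_0$. By Theorem~\ref{t:prep-land} we already know that $\mathcal{R}_\theta$ lands, say at a Misiurewicz parameter $c'$ with $\mathcal{R}_\theta^{c'}$ landing at the critical value $c'$; hence the entire statement reduces to the identification $c'=c_0$.

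I would establish this by a local study near $c_0$. Because the landing point of $\mathcal{R}_\theta^{c_0}$ is repelling pre-periodic, the Implicit Function Theorem together with the stability of rays landing at repelling (pre-)periodic points (\cite[Lemma~B.1]{GM1}, the pre-periodic counterpart of Lemma~\ref{l:preserving-portrait1}) yields a neighbourhood $U$ of $c_0$ and a holomorphic function $z_\theta\colon U\to\mathbb{C}$ with $z_\theta(c_0)=c_0$ such that $\mathcal{R}_\theta^c$ lands at $z_\theta(c)$ for all $c\in U$. The passage to the parameter plane is made through the standard identity $\Phi(c)=\phi_c(c)$ for $c\notin\mathcal{M}_d$ (the parameter B\"ottcher coordinate of $c$ equals the dynamical B\"ottcher coordinate of its critical value): it says exactly that $c\in\mathcal{R}_\theta$ if and only if the critical value $c$ lies on the dynamical ray $\mathcal{R}_\theta^c$. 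Consequently the locus $\mathcal{R}_\theta\cap(U\setminus\mathcal{M}_d)$ coincides with the set of parameters $c\in U$, outside $\mathcal{M}_d$, whose critical value lies on $\mathcal{R}_\theta^c$; and since along the parameter ray the potential of the critical value tends to $0$, the critical value tends to the landing point $z_\theta(c)$, so every accumulation point of $\mathcal{R}_\theta$ inside $U$ is a zero of the holomorphic function $g(c):=z_\theta(c)-c$. As $g(c_0)=0$ and $g\not\equiv 0$ (its zeros are Misiurewicz parameters, which are isolated), after shrinking $U$ the only candidate for such an accumulation point is $c_0$.

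The one genuinely delicate point --- and the main obstacle --- is to show that $\mathcal{R}_\theta$ really does accumulate at $c_0$, rather than bypassing the neighbourhood $U$ altogether. I would derive this from the transversality of Misiurewicz parameters: that $g$ has a \emph{simple} zero at $c_0$, i.e.\ $z_\theta'(c_0)\neq 1$, which follows from the holomorphic dependence of the repelling pre-periodic point and the expansion of $f_{c_0}$ along the (repelling) post-critical cycle (compare the treatment in \cite{S1a} and \cite{DH82}). Granting this, $g$ is a local biholomorphism, so the condition ``critical value on $\mathcal{R}_\theta^c$'' --- equivalently $g(c)$ lying on the image under $g$ of the local model of the ray $\mathcal{R}_\theta^c$ near its landing point --- describes a single arc in $U\setminus\mathcal{M}_d$ terminating at $c_0$. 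This arc is precisely $\mathcal{R}_\theta\cap U$, so $\mathcal{R}_\theta$ accumulates at $c_0$; together with Theorem~\ref{t:prep-land} (the accumulation set of a landing ray is a single point) this forces $c'=c_0$ and completes the proof. Once transversality is available, everything else is a direct transcription of the local arguments already used for the parabolic parameters, so I expect this transversality/accumulation step to be where the real work lies.
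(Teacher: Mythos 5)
Your reduction is sound up to the last paragraph: choosing $\theta$ so that $\mathcal{R}_\theta^{c_0}$ lands at the critical value, continuing the landing point holomorphically to $z_\theta(c)$ on a neighbourhood $U$, observing that for $c\notin\mathcal{M}_d$ one has $c\in\mathcal{R}_\theta$ exactly when $c\in\mathcal{R}_\theta^c$, and noting that accumulation points of $\mathcal{R}_\theta$ in $U$ are zeros of $g(c)=z_\theta(c)-c$, which are isolated. You also correctly locate the crux: one must show that $\mathcal{R}_\theta$ actually accumulates at $c_0$. But your resolution of that crux has a genuine gap, in two places. First, transversality at Misiurewicz parameters ($g'(c_0)\neq 0$) is a substantial theorem (Douady--Hubbard, Tan Lei, van Strien, Levin); it does not simply ``follow from the holomorphic dependence of the repelling pre-periodic point and the expansion along the post-critical cycle'', and nothing in this paper supplies it. Second, and more seriously, even granting $g'(c_0)\neq 0$, the inference ``$g$ is a local biholomorphism, hence the locus $\{c : c\in\mathcal{R}_\theta^c\}$ is a single arc terminating at $c_0$'' is a non sequitur. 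The condition $c\in\mathcal{R}_\theta^c$ reads $-g(c)\in\mathcal{R}_\theta^c-z_\theta(c)$, and the right-hand side is a $c$-dependent arc; controlling the displacement $z_\theta(c)-c$ says nothing about how the ray itself (in particular its direction of approach to $z_\theta(c)$) moves with $c$, and your parenthetical reformulation via ``the image under $g$ of the local model of the ray'' does not eliminate that dependence. Making this step rigorous requires uniform control of the ray near its landing point in linearizing coordinates along the repelling cycle, which is exactly the hard analytic content of the classical proof --- so the difficulty has been restated rather than resolved.

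The gap can be closed without any transversality by a monodromy argument in the spirit of the paper's abstract: $g$ has an isolated zero of some order $m\geq 1$ at $c_0$, so on a small circle $\gamma$ around $c_0$ the argument of $c-z_\theta(c)$ changes by $2\pi m\neq 0$. If $\gamma$ missed $\mathcal{R}_\theta$, then for every $c\in\gamma$ the critical value would lie in the simply connected set $\mathbb{C}\setminus\overline{\mathcal{R}_\theta^{c}}$, on which a branch of $\arg(\cdot-z_\theta(c))$ can be chosen continuously in $c$ (using the stability of the landing ray from \cite[Lemma B.1]{GM1}); after one loop the total change of argument would be $0$, a contradiction. Hence every small circle about $c_0$ meets $\mathcal{R}_\theta$, so $\mathcal{R}_\theta$ accumulates at, and therefore lands at, $c_0$. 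For comparison, the paper offers no independent argument here at all: it simply cites the pre-periodic case of \cite[Theorem~1.1]{S1a}, so your local-analytic strategy is a legitimate alternative route, but as written its key step is not proved.
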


\begin{proof}
See the proof of the pre-periodic case in \cite[Theorem~1.1]{S1a}.
\end{proof}

\begin{theorem}[Number of Rays at a Misiurewicz Point]
\label{t:noray}
Suppose that a pre-periodic angle $\theta$ has pre-period $l$ and period $n$. Then the kneading sequence $K \left( \theta \right)$ has the same pre-period $l$, and its period $k$ divides $n$. If $n/k > 1$, then the total number of parameter rays at pre-periodic angles landing at the same point as the ray at angle $\theta$ is $n/k$; if $n/k = 1$, then the number of parameter rays is $1$ or $2$.
\end{theorem}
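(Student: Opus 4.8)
The plan is to separate the statement into a purely combinatorial part about $K(\theta)$ and a dynamical part about the ray count, reducing the latter to counting dynamical rays at the critical value. I first treat the kneading sequence. Writing $K(\theta)_j = L_\theta(d^j\theta)$, the hypothesis that $\theta$ has pre-period $l$ and period $n$ gives $d^{j+n}\theta = d^j\theta$ for all $j \geq l$, hence $K(\theta)_{j+n} = K(\theta)_j$ for $j \geq l$; so $K(\theta)$ is eventually periodic with a period $k$ dividing $n$, and its pre-period is at most $l$. No boundary symbol occurs either: $K(\theta)_j = \ast$ would force $d^{j+1}\theta = \theta$, impossible for a strictly pre-periodic $\theta$. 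The one delicate point is that the pre-period is \emph{exactly} $l$. Here I would read $K(\theta)$ as the itinerary of the critical value $c_0$ in the dynamical plane of $f_{c_0}$, with respect to the partition by the $d$ dynamical rays at the critical point (the same correspondence between $\theta$-itineraries and symbol sequences used in the ``Outside the Multibrot Sets'' lemma), and argue that a pre-period $<l$ would force two distinct points of the (pre)critical orbit to share all forward itineraries, contradicting injectivity of the coding along the tree. This is the step where I expect to import the combinatorial argument of \cite[Section~4]{S1a} essentially verbatim.

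For the count, I would use that by Theorem~\ref{t:prep-land} together with its converse (Statement~(6) of Theorem~\ref{t:struct}), the parameter rays landing at the Misiurewicz parameter $c_0$ are in bijection with the dynamical rays of $f_{c_0}$ landing at the critical value $c_0$; it therefore suffices to count the latter. Set $w := f_{c_0}^{\circ l}(c_0)$, which is the landing point of $\mathcal{R}_{d^l\theta}^{c_0}$ and, since $d^l\theta$ is periodic of period $n$, is a repelling periodic point of some orbit period $p$ (Lemma~\ref{Lem:Landing of Dyn Rays}). Because $c_0\neq 0$ and none of $c_0, f_{c_0}(c_0),\dots, f_{c_0}^{\circ(l-1)}(c_0)$ equals the critical point $0$ (otherwise $c_0$ would be periodic), the map $f_{c_0}$ is a local homeomorphism at each of these points, so by the Orientation Preservation Lemma it carries the rays landing at $f_{c_0}^{\circ j}(c_0)$ bijectively onto those landing at $f_{c_0}^{\circ(j+1)}(c_0)$. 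Iterating $l$ times shows that the number of rays landing at $c_0$ equals the number landing at the periodic point $w$.

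It then remains to count the rays at $w$. By the Orientation Preservation Lemma the first return map distributes the rays equally over the $p$ points of the cycle, so if the cycle carries $r$ distinct grand orbits of angles (each of size $n$ under multiplication by $d$), then $w$ receives exactly $rn/p$ rays. I would show that $k$, read off as the period of $K(\theta)$, is exactly the period of the sector-itinerary of this cycle, and that the number of grand orbits is $r = p/k$; this is the same even-distribution bookkeeping as in the quadratic case, now carrying the $d$-fold symmetry, and yields $rn/p = n/k$ rays at $w$, hence at $c_0$, whenever $n/k>1$.

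The main obstacle is the boundary case $n/k = 1$, where the cycle has ray period equal to its orbit period and the grand-orbit count degenerates, so one must decide directly whether $w$ (equivalently $c_0$) is the landing point of one or two rays. I would settle this exactly as in the periodic case, through the dichotomy of whether the relevant point disconnects the filled-in Julia set, using Lemma~\ref{CharPoint} and the orbit-separation lemmas of Subsection~\ref{ss:osl}: two rays land precisely when $c_0$ is a disconnecting (biaccessible) point and one ray otherwise. Throughout, the only genuinely new input beyond \cite{S1a} is the bookkeeping of the degree-$d$ symmetry, so I expect the two conceptual points worth spelling out to be the pre-period identity and this $n/k=1$ dichotomy, with everything else generalizing directly.
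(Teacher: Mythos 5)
The paper offers no argument here at all: its ``proof'' is the citation to \cite[Lemma 4.4]{S1a}, so there is nothing internal to compare your proposal against, and it has to be judged on its own merits. Your reduction chain is sound and is the natural one: parameter rays at $c_0$ correspond to dynamical rays at the critical value by Theorems \ref{t:prep-land} and \ref{t:everymislp}, these correspond bijectively to the rays at $w=f_{c_0}^{\circ l}(c_0)$ because no point of $c_0,\dots,f_{c_0}^{\circ(l-1)}(c_0)$ is critical or has a critical preimage on the relevant fibre, and the count at $w$ is $rn/p$ where $r$ is the number of cycles of angles in the portrait. The combinatorial statements about $K(\theta)$ are also fine (for the exact pre-period there is in fact a purely combinatorial shortcut: $d^{l-1}\theta$ and $d^{l-1+n}\theta$ have the same image under the $d$-tupling map but are distinct, hence differ by a nonzero multiple of $1/d$ and lie in different partition arcs, so entries $l-1$ and $l-1+n$ of $K(\theta)$ disagree; no dynamics is needed).

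The genuine gap is the identity $r=p/k$, which you assert but which carries essentially all of the content of the lemma. By Lemma \ref{folklore}, either the portrait of the orbit of $w$ is of satellite type, in which case $r=1$ and the count is $n/p$, or it is primitive, in which case $p=n$ and the count is $1$ or $2$. Since the periodic part of $K(\theta)$ is the sector-itinerary of the cycle of $w$, one only gets $k\mid p$ for free; to match the claimed count $n/k$ you must prove $k=p$, i.e.\ that two \emph{distinct} points of the cycle of $w$ cannot share the same itinerary with respect to the partition by the $d$ rays landing at $0$. Without this, your argument cannot even conclude that $n/k>1$ forces the satellite case: a priori one could have $p=n$ (primitive, at most two rays) while $k$ is a proper divisor of $p$, making the asserted count $n/k>2$ false. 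Establishing $k=p$ requires a dynamical separation argument (e.g.\ that for distinct points of the cycle some iterate of a separating ray pair, or of the critical partition itself, distinguishes their itineraries — the analogue of Lemma \ref{Lem:OrbitSeparationSuperattr} for this setting), and this is exactly the step that \cite[Lemma 4.4]{S1a} supplies in the quadratic case. Note also that the formula $r=p/k$ is literally false in the primitive essential case, where $r=2$ while $p/k=1$; it is harmless only because you treat $n/k=1$ separately, but it signals that the bookkeeping is standing in for the missing separation statement rather than following from it.
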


\begin{proof}
See \cite[Lemma 4.4]{S1a}.
\end{proof}

\bibliographystyle{alpha}
\bibliography{multibrot}

\end{document}